\documentclass[12pt]{amsart}
\usepackage{amsmath, amssymb}
\usepackage[hypertexnames=false]{hyperref}
\usepackage{cleveref}
\usepackage{mathrsfs}
\usepackage{bm}
\usepackage{bbding}
\usepackage{textcomp}
\usepackage{multicol}
\usepackage{tasks}

\usepackage{empheq} 
\usepackage{fontawesome5}

\newcommand{\alphafootnote}[1]{%
	\begingroup
	\renewcommand{\thefootnote}{a}%
	\footnote{#1}%
	\endgroup
}

\usepackage[greek,english]{babel}

\usepackage[x11names]{xcolor}

\usepackage{graphicx}
\usepackage{amsthm,amscd}
\usepackage{calc}
\usepackage{mathtools}
\usepackage{CJK,fancyhdr,amscd}

\usepackage{anysize}
\usepackage{epsfig,enumerate}
\usepackage{latexsym}
\usepackage{indentfirst, latexsym}
\usepackage{graphics}
\usepackage{tikz-cd}
\usepackage{lineno}
\usepackage[all,poly,knot]{xy}

\usepackage{xpatch}
\usepackage[shortlabels]{enumitem}

\usepackage[T1]{fontenc}
\usepackage{textcomp}
\usepackage[utf8]{inputenc}

\hypersetup{
	colorlinks=true,
	linkcolor=Blue2,
	citecolor=SpringGreen4,
	urlcolor=Magenta2}

\renewcommand{\eqref}[1]{\textcolor{Red2}{\hyperref[#1]{(\ref*{#1})}}}

\xpatchcmd{\step}{%
	\normalfont\scshape\centering}{%
	\normalfont\scshape}{\typeout{Success}}{\typeout{Failure}}%

\allowdisplaybreaks[2]

\providecommand{\U}[1]{\protect\rule{.1in}{.1in}}

\numberwithin{equation}{section}

\newtheorem{theorem} {Theorem} [section]
\theoremstyle{plain}

\newtheorem{bigthm}{Theorem}

\newtheorem{bigcor}[bigthm]{Corollary}

\newtheorem{proposition}[theorem]{Proposition}

\newtheorem{lemma}[theorem]{Lemma}

\theoremstyle{definition}

\theoremstyle{plain}

\newtheorem{question}[theorem]{Question}
\newtheorem*{questionno}{Question}

\newtheorem{step}{Step}
\theoremstyle{definition}

\theoremstyle{definition}

\newtheorem{definition}  [theorem]     {Definition}

\theoremstyle{plain}

\theoremstyle{definition}
\newtheorem{remark}  [theorem]     {Remark}

\newcounter{specialremarkcounter}[section]
\renewcommand{\thespecialremarkcounter}{\thesection.\arabic{specialremarkcounter}}

\newtheoremstyle{specialremarkstyle} 
{}                          
{}                          
{\normalfont}               
{}                          
{\itshape}                  
{.}                         
{ }                         
{\textit{Remark \thespecialremarkcounter}} 

\theoremstyle{specialremarkstyle}

\newcommand{\Upomega}{\boldsymbol{\Omega}}

\newcommand{\w}{\wedge}

\newcommand{\dgla}{DGLA}

\newcommand{\LLL}{\mathrm{\mathbf L}}

\newcommand{\hooklongrightarrow}{\lhook\joinrel\longrightarrow}
\newcommand{\elemma}{\end{lemma}}
\newcommand{\blemma}{\begin{lemma}}

\newcommand{\p}{\partial}

\newcommand{\bp}{\bar\partial}

\newcommand{\End}{\operatorname{End}}

\newcommand{\bd}{\begin{enumerate} }
	\newcommand{\ed}{\end{enumerate}}

\newcommand{\tr}{\operatorname{tr}}

\DeclareMathAlphabet{\mathpzc}{OT1}{pzc}{m}{it}

\DeclareSymbolFont{extraup}{U}{zavm}{m}{n}
\DeclareMathSymbol{\varheart}{\mathalpha}{extraup}{86}
\DeclareMathSymbol{\vardiamond}{\mathalpha}{extraup}{87}

\def\p{\partial}

\def\mb{\mathbb}

\def\w{\wedge}

\def\l{\lrcorner}

\makeatletter

\newcommand{\subtitle}[1]{\gdef\@subtitle{#1}}
\newcommand{\@subtitle}{}

\renewcommand{\@settitle}{%
	\begin{center}
		\baselineskip 14\p@\relax
		\bfseries
		\uppercasenonmath\@title
		\@title\par
		\vspace{0.5em}
		{\normalfont\mdseries\upshape\sffamily\footnotesize
			\@subtitle\par}
	\end{center}
}

\makeatother

\begin{document}
	
	\title[Kuranishi space of joint deformations of $(X,E)$]
	{The Kuranishi space of joint deformations of Calabi--Yau
		manifolds and holomorphic vector bundles\\[0.5em]
		{\normalfont\small
			(with an Appendix joint with Hisashi Kasuya)}}
	
	\let\oldmaketitle\maketitle
	\renewcommand\maketitle{{\bfseries\boldmath\oldmaketitle}}
\begin{abstract}
	In this paper, we study joint deformations of pairs $(X,E)$, where
	$X$ is a compact complex manifold and $E\rightarrow X$
	is a holomorphic vector bundle.
	We prove that $(X,E)$ has unobstructed deformations
	if $X$ is a Fujiki manifold with torsion canonical bundle
	and $H^2(X,\End^0E)=0$, where $\End^0E$ denotes the
	trace-free endomorphism bundle.
	When the canonical bundle is trivial, the Fujiki assumption
	can be replaced by three weak $\partial\bar\partial$-conditions.
	We construct examples satisfying these conditions whose
	Fr\"olicher spectral sequences do not degenerate at $E_1$.
	These results provide non-K\"ahler extensions of the
	theorems of Li--Pan and Iacono--Manetti. 
	
	Without this vanishing assumption, we obtain both
	unobstructed and obstructed pairs.
	In Thomas's example, we prove unobstructedness of the pair,
	although the bundle has obstructed deformations with
	the manifold fixed.
	On the other hand, we prove that every strict projective
	Calabi--Yau manifold of dimension at least three admits
	a simple bundle with obstructed
	joint deformations.
	We also construct obstructed pairs with Hermitian
	flat bundles on complex tori.
	These results answer two questions raised by Felten.
\end{abstract}

	\author{Runze Zhang}

	\address{The Institute of Mathematical Sciences and Department of Mathematics, The Chinese University of
		Hong Kong, Shatin, N.T., Hong Kong}
	\email{runze.zhang@unice.fr}
	\urladdr{\href{https://sites.google.com/view/runzezhang}{https://sites.google.com/view/runzezhang}}


	\date{\today}
	
\subjclass[2020]{Primary 32G05, 32G08;
	Secondary 32Q25, 32L05, 58A14}

\keywords{Joint deformations, holomorphic vector bundles,
	Kuranishi spaces, Calabi--Yau manifolds, \unboldmath$\p\bp$-manifold, Fujiki manifold,
weak {\unboldmath$\partial\bar\partial$}-lemma, logarithmic deformations}

	\setcounter{tocdepth}{2}
	
	\maketitle

	\section{Introduction}
	\subsection{Background and motivation}For any compact complex manifold $X$, Kuranishi \cite{Kur65}
	constructed a semi-universal deformation which, in his words, parametrizes all
	sufficiently small deformations of $X$ in ``\textit{the most economical}'' way.
	Its analytic base is called the \textit{Kuranishi space} of $X$ and is
	denoted by $\textrm{Kur}(X)$. In general, $\textrm{Kur}(X)$ may be highly singular, even when
	$X$ is a surface \cite{Vak06}.

	\medskip
	Building on Bogomolov's  work \cite{Bog78} on holomorphic
	symplectic manifolds, Tian \cite{Tia87} and Todorov \cite{Tod89}
	established that every compact K\"ahler manifold $X$ with trivial
	canonical bundle has \textit{unobstructed deformations}, namely, that its
	Kuranishi space is smooth. This foundational result is now known as
	the 	\textit{Bogomolov--Tian--Todorov (BTT) theorem},  and its original proofs are
	differential-geometric in nature.
	Later, Ran, Kawamata and Fantechi--Manetti developed the
	$T^1$-lifting method \cite{Ran92,Kaw92,FM99}, which gives algebraic
	proofs of the BTT theorem, including the case where the canonical
	bundle is torsion; see also \cite{IM10}.
	
	\medskip
	Under the assumptions above, the K\"ahler condition can also be
	weakened. It suffices that $X$ satisfy the \textit{$\p\bp$-lemma} 	\cite{Huy05,Pop19}, meaning that
	$d$-exactness, $\p$-exactness, $\bp$-exactness and
	$\p\bp$-exactness are equivalent for every $d$-closed form of pure
	type (such an $X$ is called a \textit{$\p\bp$-manifold}). In fact, the weaker assumption that the
	Fr\"olicher spectral sequence degenerates at $E_1$  already suffices
	for unobstructedness \cite{KKP08,Iac17,ACRT18}; see also \cite{WX25} for a further weakening.

	\medskip
	\medskip
	In this paper, we study the Kuranishi space $\textrm{Kur}(X,E)$ of  joint deformations of a compact complex manifold $X$ and a holomorphic vector bundle $E$ over $X$. Such a deformation varies the complex structure of $X$ and the holomorphic structure of $E$ simultaneously.
	
	\medskip
	For  line bundles, Li--Pan and Iacono--Manetti independently proved  the following  remarkable results:
	\begin{theorem}[{\cite[Theorem 1.2]{LP19}}]\label{LP19}
		Let \(X\) be a smooth proper variety over \(\mathbb{C}\) with
		torsion canonical bundle, and let \(L\) be a line bundle on \(X\).
		Then the deformations of the pair \((X,L)\) are unobstructed.
	\end{theorem}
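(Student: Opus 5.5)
We plan to use the $T^1$-lifting criterion of Ran, Kawamata and Fantechi--Manetti, in the form: a deformation functor with a good tangent/obstruction theory is smooth once, for every $n$, the module $T^1$ of first-order deformations over $A_n=\mathbb{C}[t]/(t^{n+1})$ is locally free, equivalently $T^1(X_n,L_n/A_n)\to T^1(X_{n-1},L_{n-1}/A_{n-1})$ is surjective for every deformation $(X_n,L_n)$ over $A_n$. The deformations of $(X,L)$ are controlled by the Atiyah extension
\[
0\to \mathcal O_X\to \mathcal D^1(L)\to T_X\to 0,
\]
where $\mathcal D^1(L)$ is the sheaf of first-order differential operators on $L$. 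The tangent space is $H^1(X,\mathcal D^1(L))$ and obstructions lie in $H^2(X,\mathcal D^1(L))$, and the same holds relatively over $A_n$. We first reduce to trivial canonical bundle by the usual index-one cover $\pi\colon \tilde X\to X$ attached to the torsion $K_X$. This cover is étale and cyclic with group $G$. Deformations of $(X,L)$ are the $G$-invariant part of those of $(\tilde X,\pi^*L)$, so the $T^1$-lifting argument can be carried out $G$-equivariantly by taking invariants, which is exact in characteristic zero.

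With $K_X$ trivial, the key step is to identify $\mathcal D^1(L)$, via contraction with a nowhere-vanishing $n$-form $\Omega$, with a piece of a logarithmic de Rham complex. Let $P=\mathbb P(L\oplus\mathcal O)$, with divisor $D=D_0+D_\infty$ given by the zero and infinity sections. Then $\mathcal D^1(L)\cong T_P(-\log D)|_{D_0}$, or equivalently the $\mathbb C^*$-invariant part of $\pi_*T_{\mathrm{Tot}(L^*)}$. Contracting with $\Omega\wedge \frac{dz}{z}$, where $z$ is the fibre coordinate, identifies
\[
\mathcal D^1(L)\cong \Omega^{n}_P(\log D)\big|^{\mathbb C^*}
\]
and, more generally, $\wedge^p$ of it with log forms of degree $n+1-p$. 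The same holds relatively over $A_n$: $\Omega_{X_n/A_n}^n$ is trivial, because the generator $\Omega$ lifts, which uses the degeneration below for $H^0(K)$.

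Next we need local freeness of $H^1(X_n,\mathcal D^1(L_n))$ over $A_n$. Under the identification above this is $H^{n}$ of the relative logarithmic Hodge filtration piece, namely $\mathbb H^{*}\bigl(F^{n}\Omega^\bullet_{P_n/A_n}(\log D)\bigr)$ restricted to $\mathbb C^*$-weight $0$. By Deligne's degeneration of the log Hodge-to-de Rham spectral sequence for the smooth proper pair $(P,D)$, the relative hypercohomology groups $\mathbb H^k(\Omega^{\ge p}_{P_n/A_n}(\log D))$ are free $A_n$-modules commuting with base change. This holds because $\mathbb H^k(\Omega^\bullet_{P_n/A_n}(\log D))\cong H^k(P\setminus D,\mathbb C)\otimes A_n$, which is constant, and because of degeneration. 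The weight-$0$ part is a direct summand compatible with all of this.

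Finally, we must verify carefully that $H^1(\mathcal D^1(L_n))$ is exactly a graded piece $\mathrm{Gr}_F$, so that freeness of the filtered pieces gives freeness of this graded piece; this again uses degeneration. Then surjectivity of $T^1$ restriction follows and the $T^1$-lifting theorem gives smoothness. \textbf{The main obstacle} is this identification: one has to check that the contraction isomorphism is compatible with the relative structure over $A_n$, and that the $\mathbb C^*$-weight decomposition of $\Omega_P(\log D)$ really recovers $\mathcal D^1(L)$ rather than $\mathcal D^1(L)\oplus$ extra terms. My first attempt would be to compute locally with the coordinates $(x,z)$ on $\mathrm{Tot}(L)$ and use the weight grading by powers of $z$, as Iacono--Manetti do via the DGLA $A^{0,*}(\mathcal D^1(L))$.
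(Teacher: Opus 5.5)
Your argument is correct, but it follows a different route from the paper's. The paper gives no separate proof of this statement. It recovers it as the rank-one case of Theorem~\ref{main thm}, since smooth proper varieties are Moishezon, hence Fujiki. Two of the paper's proofs of that theorem are closest to yours:
\begin{enumerate}[(i)]
\item The $\mathbb P^1$-bundle proof shows that $A^{0,\bullet}(Y,T_Y(-\log\Delta))$ is homotopy abelian (Theorem~\ref{thm-fujiki-inobstr}). It then transfers this through the Iacono--Manetti quasi-isomorphism with $R\Gamma(X,\mathcal D(X,L))$.
\item The $T^1$-lifting proof stays on $X$. It writes $H^1(A(L_m))$ as an extension of $\ker\alpha_{1,m}$ by $\operatorname{coker}\alpha_{0,m}$, where $\alpha_{i,m}$ is cup product with $c_1(L_m)$. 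It proves these are free by a length count resting on the $\partial\bar\partial$-decomposition of $H^{k+2}_{\mathrm{dR}}$. Torsion $K_X$ is handled by an \'etale cover and Wells's injectivity (Lemma~\ref{wells}) on obstruction classes.
\end{enumerate}

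You splice the two. You transport $T^1$ to the log pair via $H^1(X_n,\mathcal D^1(L_n))\cong H^1(P_n,\Omega^n_{P_n/A_n}(\log D_n))$. You then get freeness from Deligne's log degeneration and the constancy of $H^{n+1}(P\setminus D,\mathbb C)$ via the length criterion; this is the $T^1$-lifting argument for log Calabi--Yau pairs. Since $P\setminus D$ is the punctured total space of $L$, its Gysin sequence is cup product with $c_1(L)$. So mixed Hodge theory absorbs the kernel/cokernel bookkeeping that the paper does by hand.

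Your $G$-invariant reduction is also sound. It uses only the $A_n$-freeness of $H^1(\widetilde X_n,\mathcal D^1(\pi_n^*L_n))$ and the $A_n$-linear Reynolds projector. It is therefore harmless that the contraction isomorphism is $G$-equivariant only up to the character by which $G$ acts on the lifted volume form. Your route is shorter in the algebraic setting. The paper's argument instead extracts the needed freeness from the $\partial\bar\partial$-lemma on $X$ alone. That lets it cover $K$-trivial $\partial\bar\partial$-manifolds, where log Hodge theory for $(P,D)$ and for the cover is not available off the shelf.

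Two small corrections:
\begin{enumerate}[(i)]
\item The obstacle you flag does not arise. Over $p^{-1}(U)\cong U\times\mathbb P^1$ one has $T_P(-\log D)\cong p^*T_U\oplus\mathcal O\,z\partial_z$ and $\Omega^k_P(\log D)\cong p^*\Omega^k_U\oplus p^*\Omega^{k-1}_U\wedge\frac{dz}{z}$, with sections constant along the fibres. Hence $T_P(-\log D)\cong p^*\mathcal D^1(L)$, $R^{>0}p_*=0$, and contraction gives $p_*\Omega^n_P(\log D)\cong\mathcal D^1(L)$, relatively over $A_n$ as well. No weight decomposition is needed and no extra summands appear; those live only on the open total space.
\item $H^1(\Omega^n)$ is $\mathrm{Gr}^n_F H^{n+1}(P\setminus D)$, not $H^n$ of $F^n$. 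Deligne's length criterion gives freeness and base change of each $H^q(P_n,\Omega^p_{P_n/A_n}(\log D_n))$ directly, without passing through the filtered pieces.
\end{enumerate}
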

	
	\begin{theorem}[{\cite[Theorem 1.1]{IM21}}]\label{IM21}
		Let \(L\) be a line bundle on a smooth projective variety \(X\)
		defined over an algebraically closed field \(\mathbb{K}\) of
		characteristic \(0\). If \(X\) has trivial canonical bundle, then
		the DG-Lie algebra controlling the deformations of the pair
		\((X,L)\) is homotopy abelian. In particular, $(X,L)$ has unobstructed deformations.
	\end{theorem}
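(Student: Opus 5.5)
The statement to prove is Theorem~\ref{IM21}: for a line bundle $L$ on a smooth projective $X$ with trivial canonical bundle, the DGLA controlling deformations of $(X,L)$ is homotopy abelian.

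The plan is to model the pair by the Atiyah extension. Let $\mathcal{D}^1(L)$ be the sheaf of first-order differential operators on $L$. It sits in the exact sequence
\[
0\to\mathcal{O}_X\to\mathcal{D}^1(L)\to T_X\to 0,
\]
whose extension class is $c_1(L)\in H^1(X,\Omega^1_X)$. Deformations of $(X,L)$ are controlled by the DGLA $\mathfrak g=A^{0,*}(\mathcal{D}^1(L))$ over $\mathbb C$. Over a general field $\mathbb K$ one uses a Thom--Whitney or Čech model instead, and reduces to $\mathbb C$ by the Lefschetz principle, since homotopy abelianity is detected by formal cohomological conditions.

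To prove homotopy abelianity I would use the standard criterion of Iacono--Manetti: a DGLA $\mathfrak g$ is homotopy abelian if there is an $L_\infty$-morphism $\mathfrak g\to V$ to an abelian DGLA $V$ that is injective on cohomology.

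The morphism is built with the Tian--Todorov machinery. Fix a holomorphic volume form $\Omega$. Contraction with $\Omega$ identifies $A^{0,*}(T_X)$ with $A^{n-1,*}$, and the Tian--Todorov lemma says that the bracket becomes $\partial$-exact under this identification. For the extension, I would choose a Hermitian metric on $L$ with Chern connection, so that $\mathcal{D}^1(L)$ splits smoothly as $T_X\oplus\mathcal O$. A section $(\xi,f)$ then acts as $\nabla_\xi+f$. Define
\[
\Phi(\xi,f)=\big(i_\xi\Omega,\ f\Omega+\text{(correction involving } i_\xi\Theta)\big),
\]
landing in a truncated de Rham-type complex. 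A natural target is the cone of $\wedge\,\Theta\colon A^{n-1,*}\to A^{n,*+1}$, where $\Theta$ is the curvature form. An equivalent route is Iacono--Manetti's Cartan homotopy formalism: find a Cartan homotopy from $\mathfrak g$ into $\operatorname{Hom}$ of a de Rham-type complex, with target the differential graded Lie algebra of endomorphisms of $\big(A^{*,*}(X),d\big)$ twisted by $L$. One checks the Cartan homotopy identities
\[
[i_a,i_b]=0,\qquad [i_a,d_{\mathfrak g}]+i_{da}=\mathcal L_a,\qquad [\mathcal L_a,i_b]=i_{[a,b]}.
\]
These identities give an $L_\infty$-morphism from $\mathfrak g$ to the homotopy fibre of an inclusion of subcomplexes. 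That homotopy fibre is abelian as soon as the inclusion $F^1\hookrightarrow A^{*,*}$ (Hodge filtration) is injective in cohomology.

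Degeneration of the Hodge--de Rham spectral sequence on the projective $X$, which holds by Deligne--Illusie or by Hodge theory, gives the required injectivity: $H^*(F^p)\to H^*_{dR}$ is injective. Composing, the map $H^*(\mathfrak g)\to H^*(V)$ corresponds to contraction
\[
H^i(\mathcal D^1(L))\to H^i(\Omega^{n-1}_X(\log\text{-type}))\cong H^{n-1+i}(F^{n-1}\text{ of a suitable complex}).
\]
Because $K_X$ is trivial, contraction with $\Omega$ is injective, so this map is injective. The theorem then follows from the criterion, and unobstructedness follows because homotopy abelian DGLAs have smooth deformation functors.

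The main obstacle is the $\mathcal O$-summand. One has to choose the target complex so that $\mathcal{D}^1(L)$ embeds injectively on cohomology while the filtration still degenerates. I would try the complex $\big(\Omega^{\ge n-1}_X\oplus\Omega^{\ge n}_X[-1],\ d+c_1\big)$, a mapping cone of cup product with $c_1(L)$, and prove degeneration for it by comparing with the cohomology of the $\mathbb C^*$-bundle of $L$ (mixed Hodge theory).
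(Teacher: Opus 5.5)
Your plan is correct in outline, but it is organized differently from the argument the paper attributes to Iacono--Manetti and reproduces in \S\ref{second}; the paper does not reprove this theorem. There one passes to $Y=\mathbb P(\mathcal O_X\oplus L)$ with $\Delta=\Delta_0+\Delta_\infty$ and uses $K_Y\otimes\mathcal O_Y(\Delta)\simeq p^*K_X\simeq\mathcal O_Y$. One then quotes homotopy abelianity of $A^{0,\bullet}(Y,T_Y(-\log\Delta))$ for log Calabi--Yau pairs, and transfers it along the DGLA quasi-isomorphism $R\Gamma(X,\mathcal D(X,L))\to A^{0,\bullet}(Y,T_Y(-\log\Delta))$ of \cite[Lemma~2.2]{IM21}.

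You run the Cartan-homotopy argument on $X$ itself, and your mapping cone is exactly that picture integrated over the fibres. Take $\theta=dt/t+\partial\log h$ on $L^\times=Y\setminus\Delta$; it has type $(1,0)$, and $d\theta$ is the curvature form. The forms $\alpha+\beta\wedge\theta$, with $\alpha,\beta$ pulled back from $X$, make up the cone $V$ of $\wedge\Theta$. Its Hodge filtration is $F^kV=A^{\ge k,\bullet}(X)\oplus A^{\ge k-1,\bullet}(X)\,\theta$, and $H^{k+q}(\mathrm{Gr}^k_FV)\cong H^q(Y,\Omega^k_Y(\log\Delta))$. On the other side, $A^{0,\bullet}(X,\mathcal D^1(L))$ is realized by the $\mathbb C^*$-invariant fields $\xi^{\mathrm{hor}}+f\,t\partial_t$.

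Your route avoids the compactification and the comparison lemma. The degeneration you need can also be proved on $X$ by plain Hodge theory, without mixed Hodge theory of the $\mathbb C^*$-bundle. Indeed, $V$ and each $\mathrm{Gr}^k_FV$ sit in long exact sequences whose connecting maps are cup product with $c_1(L)$. Since $c_1(L)$ has type $(1,1)$, this respects the Hodge decomposition, and the same kind of count as in \eqref{8} gives $E_1$-degeneration. The paper's route buys modularity. All Cartan identities, including the curvature terms in $\bar\partial_{A(L)}$ and in the bracket, are ordinary Cartan calculus on $Y$. The log Calabi--Yau input is also a reusable theorem (compare Theorem~\ref{thm-fujiki-inobstr}).

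Three points need tightening.

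First, the cohomological injectivity comes from contracting with the closed log volume form $\Omega\wedge\theta\in F^{n+1}V$; contraction with $\Omega$ alone does not handle the $\mathcal O$-summand. The identity $\bar\partial(\xi^{\mathrm{hor}})=(\bar\partial\xi)^{\mathrm{hor}}\pm(\xi\lrcorner\Theta)\,t\partial_t$ already reproduces the $\Im$-term of $\bar\partial_{A(L)}$, so no ``$i_\xi\Theta$ correction'' is needed. The map $(\xi,f)\mapsto i_\xi\Omega\wedge\theta\pm f\,\Omega$ then identifies the Atiyah sequence with the extension $0\to\Omega^n_X\to\mathcal E\to\Omega^{n-1}_X\to0$ of class $c_1(L)$, whose Dolbeault complex is $\mathrm{Gr}^n_FV$. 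Take $W=F^{n+1}V$; degeneration gives both $H^*(W)\hookrightarrow H^*(V)$ and $H^*(\mathrm{Gr}^n_FV)\hookrightarrow H^*(V/W)$.

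Second, the middle Cartan identity should read $\mathcal L_a=[d_V,i_a]+i_{d_{\mathfrak g}a}$, where $d_V$ is the differential of the target complex. The homotopy fibre is homotopy abelian, not abelian. Also, ``forms twisted by $L$'' only gives a complex once you pass to the cone, since $L$-valued forms with the Chern connection do not square to zero.

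Third, the reduction from $\mathbb K$ to $\mathbb C$ needs a justification. A DGLA is homotopy abelian if and only if all brackets of its minimal $L_\infty$-model vanish, and homotopy transfer commutes with extension of scalars, so the Lefschetz principle applies.
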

	
	\begin{enumerate}[{(1)}]
		\item \label{LPapproach}
		For smooth projective varieties, Li--Pan's geometric proof
		used the Beauville--Bogomolov decomposition
		\cite{Bog74,Bea83} to pass to a finite \'etale cover
		$$
		\widetilde X\simeq A\times Y\times Z,
		$$
		where $A$ is an abelian variety, and $Y$ and $Z$ are products
		of simply connected Calabi--Yau manifolds and irreducible
		holomorphic symplectic manifolds, respectively.
		They reduced the problem to proving unobstructedness for
		$(A,L_1)$, $(Y,L_2)$ and $(Z,L_3)$ separately, where the
		$L_i$ are the corresponding restrictions of the pull-back
		of $L$. The main difficulty was the abelian case, which
		they handled through the smoothness of the relevant
		Hodge locus and a formal extension theorem for line
		bundles \cite[\S\S6--7]{LP19}.
		
		\medskip
		For the initial decomposition step, the classical theorem
		was already available in the compact K\"ahler setting,
		with $A$ a complex torus. Analogous decompositions were
		established by Biswas--Cao--Dumitrescu--Guenancia for
		Moishezon manifolds in arbitrary dimension and Fujiki class $\mathscr{C}$
		manifolds of dimension at most four
		\cite[Theorem~A]{BCDG25}.
	Very recent advances in the K\"ahler MMP \cite{HX26,MZ26}
	now yield the arbitrary-dimensional Fujiki case;
	see  \cite[Theorem~9.5]{Gue26}.
		Here \textit{Moishezon} and \textit{Fujiki (class $\mathscr{C}$) manifolds}
		are compact complex manifolds bimeromorphic to projective
		and compact K\"ahler manifolds, respectively; both satisfy
		the $\p\bp$-lemma \cite[Corollary~5.23]{DGMS75}.
		
		\medskip
		\medskip
		\item \label{IMapproach}	Iacono--Manetti gave two approaches to the problem. The first used the $\mathbb P^1$-bundle $\mathbb P(\mathcal O_X\oplus L)$ with its two distinguished sections to reduce the problem to the unobstructedness of log Calabi--Yau pairs. Prior to the present work, such results were known in the
		projective setting \cite{KKP08,Iac15} and the compact K\"ahler
		setting \cite{Iac17,Ran17,LRW19,Wan18,Zha26}.   Felten adapted this construction
		to prove unobstructedness for proper log toroidal Calabi--Yau
		families equipped with a line bundle \cite[Theorem  16.2]{Fel25}.    
		Their second approach, outlined over $\mathbb C$, used
		$T^1$-lifting, deriving the required lifting property from
		the constancy of the ranks of certain cup-product maps
		\cite[Remark  2.6]{IM21}. 
	\end{enumerate}
	
	In the complex analytic setting, both of the above
	unobstructedness results  extend to pairs $(X,E)$ with
	$H^2(X,\operatorname{End}^{0}E)=0$, since the problem then
	reduces to that for $(X,\det E)$ \cite{IM19,Fel25};
	see Proposition  \ref{reduction}.
	Here $\operatorname{End}^{0}E$ denotes the trace-free subbundle
	of $\End E$ (see the beginning of $\S$\ref{AAAAA} for more details).

	\medskip\medskip
	The first motivation of this paper is to develop a direct
	differential-geometric approach to joint deformations based
	on the Tian--Todorov method, different from those described
	in \ref{LPapproach} and \ref{IMapproach}.
	By solving the coupled obstruction equations recursively,
	we seek non-K\"ahler extensions of joint unobstructedness,
	including cases where the Fr\"olicher spectral sequence
	does not degenerate at $E_1$.
	We also investigate the applicability of the two
	approaches of Iacono--Manetti in this setting.
	
	\medskip
The second motivation is to understand whether joint
unobstructedness persists without the trace-free
vanishing assumption.
We construct obstructed pairs in two settings:
\begin{enumerate}[{$\bullet$}]
	\item
	simple bundles on strict projective Calabi--Yau manifolds;
	\item
	Hermitian flat bundles on complex tori.
\end{enumerate}

	\subsection{Main results}
	Throughout the paper, a  \textit{Calabi--Yau manifold} means a connected compact complex manifold with torsion canonical bundle; no K\"ahler assumption is included in this terminology.  We call it  \textit{$K$-trivial} when its canonical bundle is holomorphically trivial. An $n$-dimensional  $K$-trivial Calabi--Yau manifold $X$  is called \textit{strict} if
	$H^i(X,\mathcal O_X)=0$ for every $0<i<n$.
	
	\medskip
	Our first main result concerns Fujiki manifolds.
	
	\begin{bigthm}\label{main thm}
		Let $X$ be a Calabi--Yau Fujiki manifold. 	Let $E$ be a holomorphic vector bundle of rank $r\ge 1$ over $X$ satisfying 
		$H^2(X,\End^0E)=0$. Then the
		joint deformations of  $(X,E)$ are unobstructed. 
	\end{bigthm}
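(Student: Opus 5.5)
The plan is to reduce first to the line bundle case and then handle $(X,L)$ with $L=\det E$ by a Tian--Todorov type argument on a finite cover.

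\textbf{Step 1: reduction to the determinant.} By Proposition~\ref{reduction}, since $H^2(X,\End^0E)=0$, it suffices to prove that $(X,\det E)$ has unobstructed deformations. The mechanism is the splitting of the Atiyah-type DGLA. The DGLA controlling $(X,E)$ is $A^{0,\bullet}(X,\mathcal D^1(E))$, the Dolbeault resolution of first-order differential operators on $E$ with scalar symbol. Using $\End E=\mathcal O_X\oplus\End^0E$, the trace map relates it to the DGLA $A^{0,\bullet}(\mathcal D^1(\det E))$ of the pair $(X,\det E)$. The vanishing of $H^2(\End^0E)$ then shows that the induced morphism of deformation functors is smooth, or at least obstruction-injective. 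So we may assume $r=1$ and set $L=\det E$.

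\textbf{Step 2: passing to $K$-trivial.} Let $K_X^{\otimes m}\cong\mathcal O_X$ with $m$ minimal. Take the index-one cover $\pi\colon\widetilde X\to X$. This is an unramified cyclic cover, and $\widetilde X$ is again Fujiki, since a finite étale cover of a Fujiki manifold is Fujiki. It has $K_{\widetilde X}\cong\mathcal O$. Deformations of $(X,L)$ are identified with $\mathbb Z/m$-invariant deformations of $(\widetilde X,\pi^*L)$. The obstruction space of $(X,L)$ is the invariant part of that of $(\widetilde X,\pi^*L)$, and the group acts through the DGLA compatibly. Hence unobstructedness, or better homotopy abelianness, upstairs passes to invariants by averaging. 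Concretely, a $T^1$-lifting statement upstairs gives one downstairs, because taking invariants is exact in characteristic $0$.

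\textbf{Step 3: the $K$-trivial Fujiki case.} Fix a holomorphic volume form $\Omega$ and a smooth Hermitian metric on $L$ with Chern connection $D=D'+\bp$. The controlling DGLA is $A^{0,\bullet}(\mathcal D^1(L))\simeq A^{0,\bullet}(T_X)\oplus A^{0,\bullet}$ as graded spaces, with bracket twisted by the curvature $\Theta$. Its Maurer--Cartan equation is
\[
\bp\phi=\tfrac12[\phi,\phi],\qquad \bp a=\phi\l D'a+\phi\l\Theta\text{-terms}.
\]
Contracting with $\Omega$ turns $\phi$ into $(n-1,\bullet)$-forms. The Tian--Todorov lemma expresses $[\phi,\psi]\l\Omega$ through $\p$ of contractions, so the obstruction to the first component is $\p$-exact and $\bp$-closed.

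The $\p\bp$-lemma on the Fujiki manifold $\widetilde X$ then lets us solve recursively with $\p$-exact corrections via a Green operator. The same approach works with the line-bundle component if we use the Iacono--Manetti trick: encode $(X,L)$ via the log pair $(\mathbb P(\mathcal O\oplus L),D_0+D_\infty)$, a Fujiki log Calabi--Yau pair. Alternatively, use a total-space description where the $a$-component pairs with $\Theta\l\phi$, and where $\Theta$ is a closed $(1,1)$-form representing $c_1$. The plan is to write the coupled obstruction as $\p\bp$-exact by applying the $\p\bp$-lemma to $\Theta\w(\text{contractions of }\Omega)$, and then to solve with power series converging by Hölder estimates as in Kuranishi.

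\textbf{Main obstacle.} I expect the main obstacle to be Step 3's coupled equation: showing the obstruction in the $A^{0,2}$-component is $\bp$-exact, not merely $\bp$-closed. Since $\Theta$ is not $\p\bp$-exact, I would first try the log-pair reduction to $\mathbb P(\mathcal O\oplus L)$. That reduction needs the $\p\bp$-lemma for logarithmic forms, i.e. $E_1$-degeneration of the log Hodge--de Rham spectral sequence. This degeneration holds on Fujiki manifolds by bimeromorphic invariance of mixed Hodge theory (Deligne), and it yields unobstructedness of the log Calabi--Yau pair. If that fails, the fallback is the $T^1$-lifting criterion via constant ranks of the cup-product maps $H^1(T)\to H^2(\mathcal O)$ given by $c_1(L)$. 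Here constancy again follows from the degeneration of the relevant Hodge spectral sequences in Fujiki families.
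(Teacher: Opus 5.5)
Your Steps 1 and 2 are sound. Step 1 is Proposition~\ref{reduction}. Step 2 is the descent in the paper's $T^1$-lifting proof: the cyclic \'etale cover $\widetilde X$ is again Fujiki, and obstruction classes inject into $H^2(\widetilde X,A(\pi^*L))$. You get the injection from $G$-invariants, the paper from Lemma~\ref{wells}. Your reduction is also a fair alternative to the paper's Tian--Todorov proof: on $\widetilde X$ you would need only the ordinary $\partial\bar\partial$-lemma, whereas the paper stays on $X$ and uses the flat-twisted Lemma~\ref{Fujiki} for $F=(l-1)K_X$.

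The gap is Step 3, which is the actual content of the theorem. You list three routes and complete none, and the main one stops exactly at what you call the main obstacle. The missing idea is that $\Theta=\operatorname{tr}R_E$ never needs to be $\partial\bar\partial$-exact.

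\begin{itemize}
\item Wedge the $a$-equation with the volume form. Up to sign, the curvature term becomes $\Theta\wedge\bigl(\sum_{i+j=k}\varphi_i\lrcorner\Omega_j\bigr)$.
\item When solving the $\varphi$-equation, use the $\partial\bar\partial$-lemma to choose this contraction $\partial$-exact, $\sum_{i+j=k}\varphi_i\lrcorner\Omega_j=\partial B_k$; compare \eqref{exact2}. This is more than just solving for $\varphi_k$. Then $d\Theta=0$ gives $\Theta\wedge\partial B_k=\partial(\Theta\wedge B_k)$.
\item The terms $\varphi\lrcorner\partial a$ are absorbed by letting the volume form evolve with $\bar\partial\Omega(t)=-\partial(\varphi(t)\lrcorner\Omega(t))$, the first equation of \eqref{log 0.2}. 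Then $\bar\partial a_i\wedge\Omega_j$ and $\partial a_i\wedge(\varphi_j\lrcorner\Omega_k)$ regroup into $\bar\partial(a_i\wedge\Omega_j)$ plus a $\partial$-exact term, as in \eqref{eq:second}.
\item So the $(n,2)$-obstruction is $\bar\partial$-closed and $\partial$-exact modulo $\bar\partial$-exact terms. The $\partial\bar\partial$-lemma makes it $\bar\partial$-exact, and $\wedge\Omega_0$ being an isomorphism $A^{0,q}\to A^{n,q}$ yields $a_k$.
\end{itemize}

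The evolving $\Omega(t)$ also repairs your first component. The Tian--Todorov lemma gives $\partial$-exactness of $[\varphi_i,\varphi_j]\lrcorner\Omega$ only when the contractions are $\partial$-closed, and this need not hold for the given harmonic $\varphi_1$ with a fixed $\Omega$ on a non-K\"ahler manifold. No H\"older estimates are needed either: formal solvability suffices by Proposition~\ref{prop:formal-vs-canonical}.

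Your two fallbacks are the paper's other two proofs. Both can be completed on $\widetilde X$, but each stated justification needs correcting.

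\begin{itemize}
\item \emph{$\mathbb P^1$-bundle route.} Since you have already made $K$ trivial, $K_Y\otimes\mathcal O_Y(\Delta)\simeq p^*K_{\widetilde X}$ is trivial. So $E_1$-degeneration of the log Hodge--de Rham spectral sequence plus the standard $T^1$-lifting argument does suffice. However, this degeneration on a Fujiki $Y$ does not follow from bimeromorphic invariance, since a K\"ahler modification need not be an isomorphism over $Y\setminus\Delta$. It follows from Deligne's weight-filtration argument, which only uses Hodge theory of the strata, all compact class~$\mathscr C$ manifolds. The paper instead works on $X$ itself, where $K_Y\otimes\mathcal O_Y(\Delta)$ is only torsion. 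That is why it proves the twisted weak logarithmic $\partial\bar\partial$-lemma (Theorem~\ref{dbar-Fujiki}) via K\"ahler modifications and logarithmic Serre duality.
\item \emph{Cup-product route.} Constancy of ranks is not obtained from spectral-sequence degeneration alone. The paper uses the full $\partial\bar\partial$-lemma to split the total cup product with $c_1(L)$ into its Hodge components $T_0^{p,q}$, and this splitting is what forces equality in Deligne's length inequalities. The splitting is available on the Fujiki $\widetilde X$, but it is precisely the step you would have to supply.
\end{itemize}
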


	Note that	for Fujiki manifolds, the torsion assumption on $K_X$ in
	Theorem  \ref{main thm} is equivalent to Tosatti's Calabi--Yau
	condition that the first Bott--Chern class
	$c_1^{\mathrm{BC}}(X)$ vanish, or to
	$c_1(X)=0$ in $H^2(X,\mathbb R)$
	\cite[Theorem  1.5]{Tos16}.

	\medskip
	\medskip
	When the canonical bundle is trivial, we can replace the Fujiki
	assumption by \textit{weak versions} of the classical $\p\bp$-lemma. Such conditions
	have been studied by Fu--Yau \cite{FY11},
	Angella--Ugarte \cite{AU16,AU17}, Popovici--Ugarte \cite{PU18},
	Rao--Zhao \cite{RZ18}, Rao--Wan--Zhao \cite{RWZ19},
	Hu--Xia \cite{HX25} and Rao--Zhang \cite{RZ26}, among many others; see also a survey by  Alessandrini \cite{Ale19}. 
	We use the following notation from \cite{RZ18}.
	
	\medskip
	\noindent\textbf{Notation.}
	Let $0\leq p,q\leq n$. We say that an $n$-dimensional compact complex manifold $X$ satisfies $\mb S^{p,q}$  (resp. $\mb B^{p,q}$) if for every $\p \alpha \in A^{p,q}(X)$ with $\p\bp \alpha=0$,  the equation $$\bp \beta=\p \alpha$$
	admits a solution (resp. a $\partial$-exact solution).  Clearly,
	\begin{equation}\label{www}
		\p\bp\text{-lemma}\Longrightarrow \mb B^{p,q}\Longrightarrow \mb S^{p,q}, \quad\text{for every }p,q.
	\end{equation}

	\medskip
		\medskip
	The point of our second main result is that 	\textit{only three weak
	$\partial\bar\partial$-conditions suffice}.
	
	\begin{bigthm}\label{main thm2}
		Let $X$ be an $n$-dimensional $K$-trivial Calabi--Yau manifold.
		Suppose that $X$ satisfies $\mathbb S^{n,1}$,
		$\mathbb S^{n,2}$ and $\mathbb B^{n-1,2}$.
		If $E\rightarrow X$ is a holomorphic vector bundle with
		$H^2(X,\operatorname{End}^{0}E)=0$, then
		$\operatorname{Kur}(X,E)$ is smooth.
	\end{bigthm}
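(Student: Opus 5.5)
First reduce to the line bundle case. By Proposition~\ref{reduction}, when $H^2(X,\End^0E)=0$ the unobstructedness of $(X,E)$ follows from that of $(X,\det E)$. So it suffices to prove that $\operatorname{Kur}(X,L)$ is smooth for every holomorphic line bundle $L$ on a $K$-trivial $X$ satisfying $\mb S^{n,1}$, $\mb S^{n,2}$ and $\mb B^{n-1,2}$. Fix a Hermitian metric $h$ on $L$ with Chern connection $D=D'+\bp$ and curvature $\Theta$. Deformations of $(X,L)$ are governed by the Atiyah-type DGLA $A^{0,\bullet}(X,T^{1,0}X)\oplus A^{0,\bullet}(X)$. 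Here a Maurer--Cartan element is a pair $(\varphi,a)$ with $\varphi\in A^{0,1}(T^{1,0}X)$ and $a\in A^{0,1}(X)$. The equations are
\[
\bp\varphi=\tfrac12[\varphi,\varphi],\qquad \bp a=\varphi\lrcorner\Theta+\text{(quadratic terms in }\varphi,a),
\]
with the precise form obtained from $(\bp+\varphi\lrcorner D'+a)^2=0$. Using the holomorphic volume form $\Omega$, the contraction $\varphi\mapsto\varphi\lrcorner\Omega$ identifies $\varphi$ with an $(n-1,1)$-form. The plan is to solve these equations by a power series
\[
\varphi(t)=\sum\varphi_k,\qquad a(t)=\sum a_k,
\]
starting from arbitrary $\bp$-closed first-order data $(\varphi_1,a_1)$ representing a class in $H^1$ of the DGLA. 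At each step, the obstruction must be shown to be $\bp$-exact, and the solution chosen in a form that allows the recursion to continue.

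For the $\varphi$-part, I would follow the Tian--Todorov scheme. Choose $\varphi_1$ so that $\varphi_1\lrcorner\Omega$ is $\p$-closed. This should be possible using $\mb S^{n,1}$ (or $\mb B$-type conditions) applied to the $\p$ of a $\bp$-closed representative. The Tian--Todorov lemma then writes $[\varphi_i,\varphi_j]\lrcorner\Omega$ as a $\p$ of contractions. Hence the obstruction $\tfrac12\sum_{i+j=k}[\varphi_i,\varphi_j]\lrcorner\Omega$ is $\p$-exact, of the form $\p\alpha$ with $\alpha\in A^{n-2,2}$, and it is $\bp$-closed by the Jacobi identity, so $\p\bp\alpha=0$. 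Condition $\mb B^{n-1,2}$ then gives a $\p$-exact $\bp$-primitive. This keeps $\varphi_k\lrcorner\Omega$ $\p$-exact, hence $\p$-closed, so the induction continues. This is where the indices $(n-1,2)$ enter.

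For the $a$-part, contract the second equation with $\Omega$ as well. I would regard $\Theta\lrcorner$ applied to $\varphi\lrcorner\Omega$ as an $(n,2)$-form, and more generally treat the whole obstruction for $a_k$, multiplied by $\Omega$, as an $(n,2)$-form. Because $\varphi_k\lrcorner\Omega=\p\gamma_k$, terms like $(\varphi_k\lrcorner\Theta)\w\Omega$ should rewrite, up to sign, as $\p(\ldots)$-type expressions, using $\p\Theta=0$ locally via $\Theta=\p\bp\log h$ on the total form. The obstruction is $\bp$-closed by the inductive hypothesis. Applying $\mb S^{n,2}$, and $\mb S^{n,1}$ for choosing $a_1$ compatibly, yields a $\bp$-solution. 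Since $a\w\Omega\leftrightarrow a$ is an isomorphism $A^{0,q}\cong A^{n,q}$, dividing by $\Omega$ gives $a_k$.

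Finally, once the formal solution exists with $\varphi_k, a_k$ chosen via Green operators (for instance $\varphi_k=\bp^*G\cdot$ of the obstruction, or the canonical solution projected appropriately), I would prove convergence by standard H\"older estimates in the Kuranishi fashion. Smoothness of $\operatorname{Kur}(X,L)$ then follows because every first-order class extends. I expect the main obstacle to be the coupled $a$-equation: showing that its obstruction has exactly the $\p$-exact, $\p\bp$-closed shape needed to apply $\mb S^{n,2}$. That requires arranging the cross terms between $\varphi$ and $a$ (notably $\varphi\lrcorner D'a$ and $\varphi\lrcorner\Theta$) into a single $\p$ of an $(n-1,2)$-form after wedging with $\Omega$. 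I would first try to do this by working with the operator $\p_\Theta$-twisted Tian--Todorov identity on the total space of the principal $\mathbb C^*$-bundle, or on $L^{-1}\setminus 0$, where the pair becomes a single complex structure.
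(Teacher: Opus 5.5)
Your plan works, but it is a gauge-fixed variant of the paper's Tian--Todorov argument rather than the same one. The paper never normalizes the first-order term. It introduces the auxiliary series $\Omega(t)=\Omega_0+\sum_{k\geq1}\Omega_kt^k$ and uses $\mathbb S^{n,1}$ to solve $\bar\partial\Omega_1=-\partial(\varphi_1\lrcorner\Omega_0)$. You instead keep $\Omega$ fixed and demand $\partial(\varphi_1\lrcorner\Omega)=0$. After that, $\mathbb B^{n-1,2}$ and $\mathbb S^{n,2}$ enter exactly where they do in the paper: your scheme is the paper's computation with $\Omega_k=0$ for $k\geq1$. Your normalization makes visible that $\mathbb S^{n,1}$ is used only once. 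The paper's device needs no gauge-fixing and treats all orders uniformly, as in the logarithmic and flat-twisted settings it comes from. Two of your steps still have to be supplied, and one should be dropped.

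\emph{The gauge fixing.} $\mathbb S^{n,1}$ alone gives only $\partial(\varphi_1\lrcorner\Omega)=\bar\partial\beta$ with $\beta\in A^{n,0}(X)$, which is not yet of the form $\bar\partial\partial\gamma$. $\mathbb B^{n,1}$ would give that directly, but it is not assumed. What closes the step is the splitting $A^{n,0}(X)=\partial A^{n-1,0}(X)\oplus\mathbb C\,\Omega$. The quotient $A^{n,0}/\partial A^{n-1,0}$ is the conjugate of $H^{0,n}_{\bar\partial}(X)$, hence one-dimensional by Serre duality. Moreover $\Omega\notin\partial A^{n-1,0}$, since $\int_X\partial\gamma\wedge\bar\Omega=0\neq\int_X\Omega\wedge\bar\Omega$. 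So $\mathbb S^{n,1}$ actually implies $\mathbb B^{n,1}$ here. Write $\beta=\partial\gamma+c\,\Omega$ and $\gamma=\xi\lrcorner\Omega$. Then the cohomologous representative $(\varphi_1+\bar\partial\xi,\;a_1+\xi\lrcorner\Theta)$ has $\partial$-closed contraction with $\Omega$. No weak condition is needed ``for $a_1$''. In the paper, $-\beta$ is simply kept as $\Omega_1$.

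\emph{The cross terms.} These are not an obstacle, and no $\mathbb C^*$-bundle is needed; on that noncompact space your hypotheses give nothing anyway. By the contraction formula and type reasons,
$(\varphi\lrcorner\Theta)\wedge\Omega=-\Theta\wedge(\varphi\lrcorner\Omega)$ and $(\varphi\lrcorner\partial a)\wedge\Omega=-\partial a\wedge(\varphi\lrcorner\Omega)$. We also have $\partial\Theta=0$ globally, $\varphi_k\lrcorner\Omega=\partial\nu_k$ for $k\geq2$, and $\partial(\varphi_j\lrcorner\Omega)=0$ for all $j$. Hence the order-$k$ equation for $k\geq2$ becomes
$\bar\partial(a_k\wedge\Omega)=-\partial\bigl(\Theta\wedge\nu_k+\sum_{i+j=k}a_i\wedge(\varphi_j\lrcorner\Omega)\bigr)$.
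The bracket is an $(n-1,2)$-form, and the right side is $\bar\partial$-closed once $\varphi_k$ has been chosen, so $\mathbb S^{n,2}$ applies. The only non-exact term, $\Theta\wedge(\varphi_1\lrcorner\Omega)$, occurs at order one, where it is just the cocycle condition.

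\emph{The convergence step.} Drop it. By Proposition~\ref{prop:formal-vs-canonical}, formal solvability for every first-order class already gives smoothness. Moreover, on a non-K\"ahler $X$ there is no reason for $\bar\partial^*\mathcal G$-solutions to keep $\varphi_k\lrcorner\Omega$ $\partial$-exact, which your recursion needs.
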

	
Theorem  \ref{main thm2} applies 	\textit{even when the Fr\"olicher spectral
		sequence does not degenerate at $E_1$. } We construct two such
	$K$-trivial Calabi--Yau examples in
	Propositions  \ref{prop:weak-ddbar-threefold}
	and  \ref{prop:nonzero-obstruction-example}.
	In particular, neither is a $\p\bp$-manifold, so the implication
	from the full $\p\bp$-lemma to these three weak conditions
	in \eqref{www} cannot be reversed.
	
	Here $A(E)$ denotes the Atiyah extension of $E$, and
	$H^2(X,A(E))$ is the obstruction space for joint
	deformations; see \S\ref{section 2}.
	
	\medskip
	\begin{enumerate}[{$\bullet$}]
		\item
		The first is a compact complex threefold $X$ for which
		$H^2(X,A(E))=0$ whenever $H^2(X,\End^0E)=0$.
		Thus joint unobstructedness already follows from the
		vanishing of the obstruction space.
		
		\medskip
		\item\label{second example}
		The second is a compact complex sixfold $X$ admitting a rank-two
		simple holomorphic vector bundle $E$  satisfying
		$H^2(X,\operatorname{End}^{0}E)=0$, whereas
		\begin{equation*}
			H^2(X,\operatorname{End}E)\simeq\mathbb C,
			\quad
			H^2(X,A(E))\simeq\mathbb C^2.
		\end{equation*}
		Thus our Tian--Todorov argument applies even though
		the obstruction space does not vanish.
	\end{enumerate}

	\medskip
	For a line bundle, the trace-free endomorphism bundle is zero.
	We therefore obtain:
	
	\begin{bigcor}\label{cor 1}
		Let $X$ be as in Theorem  \ref{main thm}
		or Theorem  \ref{main thm2}.
		Then $\operatorname{Kur}(X,L)$ is smooth
		for every $L\in\operatorname{Pic}(X)$.
	\end{bigcor}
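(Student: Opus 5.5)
This is a direct specialization of Theorem \ref{main thm} and Theorem \ref{main thm2}, so the plan is to check that a line bundle always satisfies the vanishing hypothesis those theorems require. Let $L\in\operatorname{Pic}(X)$, a holomorphic vector bundle of rank $r=1$. Then $\End L=L^{\ast}\otimes L\simeq\mathcal O_X$ canonically. The trace map $\tr\colon \End L\to\mathcal O_X$ is the identity of $\mathcal O_X$ under this identification. Hence the trace-free subbundle $\End^0L=\ker(\tr)$ is the zero bundle, and in particular
\begin{equation*}
H^2(X,\End^0L)=0 .
\end{equation*}

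In the first case, $X$ is a Calabi--Yau Fujiki manifold, and Theorem \ref{main thm} with $E=L$ and $r=1$ gives that the joint deformations of $(X,L)$ are unobstructed. In the second case, $X$ is an $n$-dimensional $K$-trivial Calabi--Yau manifold satisfying $\mathbb S^{n,1}$, $\mathbb S^{n,2}$ and $\mathbb B^{n-1,2}$, and Theorem \ref{main thm2} gives that $\operatorname{Kur}(X,L)$ is smooth. Since unobstructedness of joint deformations is, by definition, the smoothness of the Kuranishi space $\operatorname{Kur}(X,L)$, the two cases together give the corollary for every $L\in\operatorname{Pic}(X)$.

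There is no genuine obstacle here. The only point requiring care is to make sure $\End^0$ is defined as the kernel of the trace, as at the beginning of \S\ref{AAAAA}, and not by some normalization that degenerates in rank one. One should also confirm that the rank condition $r\ge1$ in Theorem \ref{main thm} includes $r=1$, which it does.
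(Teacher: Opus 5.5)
Your proposal is correct and follows the paper's own argument: for a line bundle, $\End^0L=\ker(\tr)=0$, so the hypothesis $H^2(X,\End^0L)=0$ holds automatically, and Theorems~\ref{main thm} and~\ref{main thm2} (with $r=1$) give smoothness of $\operatorname{Kur}(X,L)$.
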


	\medskip
	In dimension two, every $\p\bp$-manifold has even
	first Betti number and is therefore K\"ahler; see
	\cite[Theorem  11]{Buchdahl} or 	\cite[Theorem  IV.3.1]{BHPV04}.
	Among the three weak conditions in Theorem  \ref{main thm2},
	$\mathbb S^{2,2}$ and $\mathbb B^{1,2}$ hold automatically
	on every compact complex surface, whereas $\mathbb S^{2,1}$
	is equivalent to the K\"ahler condition;
	see Proposition  \ref{prop:weak-surface}.
	
	Recall also that $E$ is \textit{simple} if
	$$H^0(X,\End E)=\mathbb C\cdot\operatorname{id}_E;$$
	see, e.g., \cite[\S5.7]{Kob87}.
	Using Serre duality, we  obtain the following
	corollary, which removes the assumption $c_1(E)\neq0$ from
	\cite[Proposition  7.7]{CS16}.

	\begin{bigcor}\label{simple}
		Let $X$ be a K\"ahler $K$-trivial Calabi--Yau surface, and let
		$E\rightarrow X$ be a simple holomorphic vector bundle. Then  $\mathrm{Kur}(X,E)$ is smooth.
	\end{bigcor}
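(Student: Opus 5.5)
The plan is to reduce Corollary~\ref{simple} to Theorem~\ref{main thm}, and more precisely to the vanishing $H^2(X,\End^0E)=0$, obtained from simplicity by Serre duality. A compact K\"ahler surface is in particular a Fujiki manifold. $K$-trivial means $K_X\simeq\mathcal O_X$, which is torsion, so $X$ is a Calabi--Yau Fujiki manifold in the sense of the paper. It therefore remains only to verify the cohomological hypothesis of Theorem~\ref{main thm}.

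For this I will use the splitting $\End E=\mathcal O_X\cdot\operatorname{id}_E\oplus\End^0E$. It is holomorphic and given by $\phi\mapsto(\tfrac1r\tr\phi)\operatorname{id}_E+(\phi-\tfrac1r\tr\phi\,\operatorname{id}_E)$, which works since we are over $\mathbb C$. It yields $H^0(X,\End E)=H^0(X,\mathcal O_X)\oplus H^0(X,\End^0E)$. Since $X$ is connected and compact, $H^0(X,\mathcal O_X)=\mathbb C$, and this summand is spanned by $\operatorname{id}_E$. Simplicity, $H^0(X,\End E)=\mathbb C\cdot\operatorname{id}_E$, then forces $H^0(X,\End^0E)=0$.

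Next I apply Serre duality on the surface $X$ (so $n=2$), using $K_X\simeq\mathcal O_X$:
\[
H^2(X,\End^0E)\simeq H^0\bigl(X,(\End^0E)^\ast\otimes K_X\bigr)^\ast\simeq H^0\bigl(X,(\End^0E)^\ast\bigr)^\ast .
\]
The trace pairing $(\phi,\psi)\mapsto\tr(\phi\psi)$ is nondegenerate on $\End^0E$, so $(\End^0E)^\ast\simeq\End^0E$. Hence $H^2(X,\End^0E)\simeq H^0(X,\End^0E)^\ast=0$, and Theorem~\ref{main thm} gives the smoothness of $\mathrm{Kur}(X,E)$.

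No step here is a serious obstacle. The only point needing care is the self-duality of $\End^0E$ under the trace pairing, which holds fiberwise because the orthogonal complement of $\operatorname{id}$ in $\mathfrak{gl}_r$ under $\tr(\phi\psi)$ is exactly $\mathfrak{sl}_r$, and the pairing restricted to $\mathfrak{sl}_r$ is nondegenerate. One should also keep track of the rank-one case: there $\End^0E=0$ and the corollary reduces to Corollary~\ref{cor 1}.
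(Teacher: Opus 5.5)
Your argument is correct. It reaches the conclusion through Theorem~\ref{main thm}, whereas the paper goes through Theorem~\ref{main thm2}.

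The first half is the same in both. Simplicity kills $H^0(X,\End^0E)$, and Serre duality with $K_X\simeq\mathcal O_X$ and $(\End^0E)^\vee\simeq\End^0E$ gives $H^2(X,\End^0E)=0$.

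The paper must then check the three weak conditions $\mathbb S^{2,1}$, $\mathbb S^{2,2}$ and $\mathbb B^{1,2}$. It does this via Proposition~\ref{prop:weak-surface}. That proposition shows the last two hold on every compact complex surface and that $\mathbb S^{2,1}$ is equivalent to K\"ahlerness. The hard direction of that equivalence uses $E_1$-degeneration for surfaces, a Bott--Chern/Dolbeault comparison, and the even-$b_1$ criterion.

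Your route avoids all of this. A compact K\"ahler manifold is trivially Fujiki and a trivial canonical bundle is torsion, so Theorem~\ref{main thm} applies directly and your proof is shorter. Even on the paper's route only the easy implication is needed: K\"ahler gives the $\partial\bar\partial$-lemma, hence all three conditions by \eqref{www}.

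The paper's detour does buy extra information. For surfaces, the hypotheses of Theorem~\ref{main thm2} are equivalent to K\"ahlerness, so the K\"ahler assumption in the corollary cannot be relaxed by appealing to Theorem~\ref{main thm2}.

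Your justification of the trace-pairing self-duality is fine. The rank-one case needs no separate mention, since $\End^0E=0$ there and the same argument applies verbatim.
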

	
	\medskip
	\medskip
		\medskip
It is natural to ask: does the unobstructedness conclusion
of Corollary~\ref{cor 1} extend to higher-rank bundles
even when $H^2(X,\End^0E)\neq0$?
	
	 \medskip
Thomas \cite{Tho99} constructed a stable (hence simple)
rank-two bundle $E$ on a smooth hypersurface
$X\subset\mathbb P^2\times\mathbb P^2$ of bidegree $(3,3)$,
which is a strict projective Calabi--Yau threefold. 
	The deformations of $E$  with \textit{$X$ fixed} are obstructed.
	In particular, the forgetful morphism
	$\operatorname{Def}(X,E)\to\operatorname{Def}(X)$ is not smooth;
	see also \cite[Example  11.14]{Fel25}.
	Here $\operatorname{Def}(X,E)$ and $\operatorname{Def}(X)$
	denote the deformation functors of $(X,E)$ and $X$, respectively.
	This led Felten to ask the following questions.
	
	\begin{questionno}[{\cite[Remark  11.15]{Fel25}}]
		\leavevmode
		\begin{enumerate}[(i)]
			\item\label{q1}
			Is the functor $\operatorname{Def}(X,E)$
			(equivalently, the Kuranishi space $\operatorname{Kur}(X,E)$)
			smooth in Thomas's example?
			
			\item\label{q2}
			If so, does there exist a  (projective)
			Calabi--Yau manifold carrying a holomorphic vector bundle
			of rank at least two with \textit{obstructed}  joint deformations?
		\end{enumerate}
	\end{questionno}

	We answer both questions affirmatively.

	\medskip
Theorem \ref{Thomas} answers Question  \ref{q1}.\footnote{In contrast, for the non-K\"ahler sixfold in
		Proposition~\ref{prop:nonzero-obstruction-example},
		both fixed-base and joint deformations are unobstructed,
		but the forgetful morphism
		$\operatorname{Def}(X,E)\to\operatorname{Def}(X)$
		is not smooth.}
The Serre construction relates Thomas's bundle $E$ to
a rational curve $C\subset X$. In our proof, we extend
this correspondence to joint deformations, reducing
the problem to lifting $(X,C)$ across Artin extensions.
We lift the deformed curve inside
$\mathbb P^2\times\mathbb P^2$ and correct a lift of
the bicubic equation to vanish on it, without changing
the given deformation over the smaller Artin algebra.
The correspondence then recovers a lift of the bundle.

\medskip
Our next main result answers Question~\ref{q2}
\textit{even for simple bundles}.

\begin{bigthm}\label{thm:simple-obstructed-pairs}
	Let $X$ be a strict projective Calabi--Yau manifold
	of dimension $n\geq3$.
	Then there exists a simple holomorphic vector bundle
	$E\rightarrow X$ of rank at least two such that
	$\operatorname{Kur}(X,E)$ is singular.
\end{bigthm}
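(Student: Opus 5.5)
We would look for the obstruction directly in the second-order term of the Kuranishi map, taking $E$ to be a non-split extension built from line bundles. The plan is to find a simple bundle $E$ and a first-order joint deformation $\xi\in H^1(X,A(E))$ whose primary obstruction $\frac12[\xi,\xi]\in H^2(X,A(E))$ is nonzero. Since $\operatorname{Kur}(X,E)$ is smooth exactly when the Kuranishi map vanishes identically, and its quadratic part is this bracket, a nonzero quadratic obstruction already forces singularity.

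\textbf{Choice of bundle.} Since $X$ is strict projective Calabi--Yau with $n\ge3$, we have $H^1(X,\mathcal O_X)=0$ and $H^{n-1}(X,\mathcal O_X)=0$. Hence $\operatorname{Pic}(X)$ is discrete and every line bundle deforms uniquely along every deformation of $X$. Fix an ample $H$ and let $E$ be a non-split extension
\[
0\to \mathcal O_X\to E\to H^{-k}\otimes I\to 0,
\]
or, more simply, a rank-two extension $0\to L_1\to E\to L_2\to 0$ with $\operatorname{Ext}^1(L_2,L_1)=H^1(X,L_1L_2^{-1})\ne0$. Because $H^1(\mathcal O)=0$, such a class needs $L_1L_2^{-1}$ non-effective with nonzero $H^1$. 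The most robust choice would be a direct sum of many copies, or higher-rank extensions $0\to \mathcal O^{\oplus a}\to E\to F\to0$ of Mukai/Lazarsfeld type. Simplicity should follow from a standard argument: the extension class is nondegenerate and $\operatorname{Hom}(L_1,L_2)=0$.

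\textbf{Computing the obstruction.} Decompose $\xi=(\theta,\eta)$ with $\theta\in H^1(T_X)$. Take $\theta=0$, so $\eta\in H^1(\End E)$ maps to zero under $\sigma:H^1(A(E))\to H^1(T_X)$. Then $[\xi,\xi]$ is the image of the Yoneda square $\eta\circ\eta\in H^2(\End E)$ under $H^2(\End E)\to H^2(A(E))$. The kernel of that map is the image of $\cup\operatorname{At}(E):H^1(T_X)\to H^2(\End E)$.

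So the core computation is to choose $\eta$ with $\eta\circ\eta\notin \operatorname{Im}(H^1(T_X)\xrightarrow{\ \operatorname{At}(E)\ } H^2(\End E))$. To make this tractable, design $E$ so that $\eta$ lives in off-diagonal blocks $H^1(\mathcal Hom(L_i,L_j))$; the cup product of two such blocks then lands in a diagonal or other block. Meanwhile $\operatorname{At}(E)\cup\theta$ is controlled by $\operatorname{At}(L_i)\cup\theta=c_1(L_i)\cup\theta\in H^2(\mathcal O_X)$ plus off-diagonal terms. For $n\ge3$ strict, $H^2(\mathcal O_X)=0$, so diagonal contributions from $H^1(T_X)$ vanish. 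Hence a nonzero Yoneda product landing in a block $H^2(\mathcal Hom(L_i,L_j))$ that is unreachable from the $T_X$-part gives the obstruction.

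A concrete candidate: $E=\mathcal O\oplus H\oplus H^{2}$ deformed to be simple, or the bundle $\mathcal O\oplus H^{a}$ twisted so that $\eta_1\in H^1(H^{-a})$-type classes multiply nontrivially. By Serre duality and Kodaira vanishing, the only nonzero intermediate cohomology of $H^m$ sits in degree $0$ or $n$. We will therefore use a non-split extension by an ideal sheaf of a complete intersection, where Koszul complexes make $H^1,H^2$ computable.

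\textbf{Main obstacle.} The hardest step is twofold: exhibit classes with nonzero Yoneda square in $H^2(\End E)$, and simultaneously keep $E$ simple while showing the square is not absorbed by $\operatorname{At}(E)\cup H^1(T_X)$. We would first try a Serre-type bundle associated with a codimension-two complete intersection of ample divisors, as in the Thomas discussion. There the Koszul resolution gives explicit $H^1,H^2$ of $\End E$, cup products reduce to products of sections of $\mathcal O(D_i)$, and the non-absorption should follow from a degree/weight argument comparing twists.
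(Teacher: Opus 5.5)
Your reduction is sound and agrees with the last step of the paper. For $\xi=(\eta,0)$ the quadratic obstruction is, up to sign, $\iota(\eta\circ\eta)$, where $\iota\colon H^2(X,\End E)\to H^2(X,A(E))$. Moreover $\ker\iota$ is the image of $\lrcorner\operatorname{At}(E)\colon H^1(X,T_X^{1,0})\to H^2(X,\End E)$, and a nonzero quadratic term does force $\operatorname{Kur}(X,E)$ to be singular.

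\textbf{The gap: no obstructed class is produced.} You never exhibit a simple $E$ with a class $\eta$ satisfying $\eta\circ\eta\neq0$, and the concrete candidates you list fail on a general strict Calabi--Yau, which the theorem must cover. Take a smooth quintic threefold, where $\operatorname{Pic}(X)=\mathbb Z H$. Then $H^1(X,M)=0$ for every line bundle $M$. If $M$ is ample, this is Kodaira vanishing with $K_X\simeq\mathcal O_X$. If $M^{-1}$ is ample, use $H^1(X,M)\simeq H^{n-1}(X,M^{-1})^\vee$ and Kodaira again. Finally $H^1(X,\mathcal O_X)=0$. Consequences:
\begin{enumerate}
\item Every extension of line bundles splits.
\item Any direct sum of line bundles has $H^1(X,\End E)=0$, so it is rigid and cannot be ``deformed to be simple''; there is no $\eta$ at all.
\item The Serre bundle of a complete intersection $C=D_1\cap D_2$ is no better. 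Since $H^1$ and $H^2$ of $\mathcal O_X(-D_1-D_2)$ vanish for $n\geq3$, one gets $\operatorname{Ext}^1(\mathcal I_C(D_1+D_2),\mathcal O_X)\simeq H^0(C,\mathcal O_C)=\mathbb C$. So the only locally free extension is the Koszul one, $\mathcal O_X(D_1)\oplus\mathcal O_X(D_2)$, which is not simple.
\end{enumerate}
Curves such as Thomas's line exist only on special $X$. Your remaining candidates (an unspecified ideal $I$, ``Mukai/Lazarsfeld type'' extensions) are not made precise. The ``degree/weight argument'' for non-absorption is also never specified.

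\textbf{The missing idea.} The obstruction should come from a local object, which exists on every $X$, and then be transported to a bundle without loss. The paper proceeds as follows.
\begin{enumerate}
\item Take the length-four subspace $Z$ with $\mathcal I_{Z,p}=(z_4,\dots,z_n)+(z_1,z_2,z_3)^2$. Its Hilbert germ has a factor $C\operatorname{Gr}(2,6)$, and the Pl\"ucker quadric obstructs a first-order embedded deformation. Since $\operatorname{Ext}^1_X(\mathcal I_Z,\mathcal O_X)=0$, this becomes $\alpha\circ\alpha\neq0$ in $\operatorname{Ext}^2_X(\mathcal I_Z,\mathcal I_Z)$, and $\mathcal I_Z$ is simple.
\item Apply $n-1$ evaluation sequences $0\to G_j\to H^0(G_{j-1}(m_j))\otimes\mathcal O_X(-m_j)\to G_{j-1}\to0$. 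This is essentially the kernel-of-evaluation construction you allude to, but applied to an already obstructed sheaf. The final kernel $E$ is locally free.
\item Because $X$ is strict Calabi--Yau, each $\mathcal O_X(-m_j)$ is spherical and each step is a Seidel--Thomas twist. Hence $\operatorname{Ext}^\bullet(E,E)\simeq\operatorname{Ext}^\bullet(\mathcal I_Z,\mathcal I_Z)$ compatibly with composition, which gives simplicity of $E$ and $\beta\circ\beta\neq0$.
\item Non-absorption is handled by showing $\lrcorner\operatorname{At}(E)=0$, not by weights. Both $\mathcal O_X(1)$ and $Z$ extend over every first-order deformation of $X$, and the resolution lifts by Serre vanishing, so $E$ extends. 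Thus $H^1(X,A(E))\to H^1(X,T_X^{1,0})$ is surjective and $\iota$ is injective.
\end{enumerate}
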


	Thus the higher-dimensional analogue of
	Corollary~\ref{simple} fails.
	
	\medskip

Our proof starts with a length-four subspace of $X$
whose ideal sheaf has a nonzero quadratic obstruction.
Following Joyce--Song \cite[\S8]{JS12}, we use
Seidel--Thomas twists \cite{ST01} to construct a simple
vector bundle while preserving this obstruction.
We then show that the resulting bundle extends over
every first-order deformation of $X$. This makes
the natural map from the fixed-base obstruction space
to the joint obstruction space injective, so the
obstruction persists when $X$ varies.

	\medskip
	\medskip
By contrast, Verbitsky \cite{Ver04} showed that generic
complex tori of dimension at least three admit no simple
holomorphic vector bundles of rank at least two.
Nevertheless, the following result, proved jointly with
Hisashi Kasuya in Appendix~\ref{jw}, gives obstructed pairs
with Hermitian flat bundles on every complex torus of
dimension at least two.
	\begin{bigthm}[{=Theorem  \ref{with Kasuya}}]
		Let $X$ be a complex torus of dimension $n\geq2$,
		and let $L\in\operatorname{Pic}^0(X)$. Set
		\begin{equation*}
			E:=L^{\oplus r},\quad r\geq2.
		\end{equation*}
		Then $\operatorname{Kur}(X,E)$ is singular.
	\end{bigthm}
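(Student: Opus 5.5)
The plan is to exhibit an explicit nonzero quadratic obstruction in the DGLA controlling joint deformations of $(X,E)$. For a torus $X=V/\Lambda$ with $E=L^{\oplus r}$ and $L$ Hermitian flat, this DGLA can be computed by harmonic forms. Using the flat unitary connection on $E$, every Dolbeault class is represented by a form with constant coefficients. Hence
\[
H^q(X,\End E)\simeq \Lambda^q\bar V^*\otimes \mathfrak{gl}_r,\qquad H^q(X,T_X)\simeq \Lambda^q\bar V^*\otimes V .
\]
Since $\End E=\End(\mathcal O^{\oplus r})$ is trivial, the Atiyah class of $E$ vanishes. So $A(E)\simeq \End E\oplus T_X$ holomorphically, and $H^q(X,A(E))\simeq \Lambda^q\bar V^*\otimes(\mathfrak{gl}_r\oplus V)$. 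I would take the Dolbeault DGLA $A^{0,*}(X,A(E))$ built from the flat connection: elements are pairs $(\varphi,\alpha)$ with $\varphi\in A^{0,*}(T_X)$ and $\alpha\in A^{0,*}(\End E)$. The bracket combines the Schouten bracket on $\varphi$, the action of $\varphi$ on $\alpha$ by Lie derivative via the connection, and $[\alpha,\beta]$ using wedge plus matrix commutator.

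The subcomplex $\mathfrak h$ of constant-coefficient (harmonic) forms is closed under the bracket. The bracket of two constant vector-field-valued forms vanishes, and $\varphi$ acts trivially on constant $\alpha$ because the connection is flat with constant coefficients. So $\mathfrak h$ is a DGLA with zero differential, and its bracket is just
\[
[(\varphi,\alpha),(\psi,\beta)]=(0,[\alpha,\beta]),
\]
the wedge-commutator on $\Lambda^\bullet\bar V^*\otimes\mathfrak{gl}_r$. I would then show that the inclusion $\mathfrak h\hookrightarrow A^{0,*}(X,A(E))$ is a quasi-isomorphism of DGLAs; this is the main technical step. The cleanest route is via Fourier decomposition: the nonconstant Fourier modes form a $\bar\partial$-acyclic complement that is an ideal modulo the constants. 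Alternatively, invoke formality in the Goldman--Millson style, since $X$ is Kähler and the flat bundle $\End E$ is unitary, so $\partial\bar\partial$-type arguments give formality. Granting this, the Kuranishi space is the germ at $0$ of the quadratic cone
\[
\{(\varphi,\alpha)\in \bar V^*\otimes(V\oplus\mathfrak{gl}_r):\ [\alpha,\alpha]=0\}.
\]

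It remains to see that this cone is singular, i.e. the quadratic map $\alpha\mapsto[\alpha,\alpha]\in\Lambda^2\bar V^*\otimes\mathfrak{gl}_r$ is not identically zero. Here we use $n\ge2$ and $r\ge2$. Choose independent $\bar\zeta_1,\bar\zeta_2\in\bar V^*$ and non-commuting $A,B\in\mathfrak{gl}_r$, e.g. elementary matrices $E_{12},E_{21}$. Set $\alpha=\bar\zeta_1 A+\bar\zeta_2 B$. Then $[\alpha,\alpha]=2\,\bar\zeta_1\wedge\bar\zeta_2\,[A,B]\neq0$. A germ cut out by nonzero homogeneous quadrics with no linear terms is not smooth at the origin: its Zariski tangent space is the whole $H^1$, while its dimension is strictly smaller. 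Hence $\operatorname{Kur}(X,E)$ is singular.

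The step I expect to be hardest is the quasi-isomorphism claim, in particular controlling the mixed terms where $\varphi$ acts on $\alpha$. If a direct argument fails, my fallback is more modest. It suffices to show that the Kuranishi map's quadratic term is the harmonic projection of $[\alpha,\alpha]$, which is the constant form computed above and is nonzero in $H^2(X,A(E))$. This already gives an obstruction at second order, without needing full formality.
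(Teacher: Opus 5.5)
Your proof is correct, but it follows a different route from the paper's.

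\textbf{The paper's route.} The paper first applies the Kasuya--Purho splitting theorem to get $(\operatorname{Kur}(X,E),0)\simeq(\operatorname{Kur}(X),0)\times(\operatorname{Kur}_X(E),0)$, and uses Bogomolov--Tian--Todorov for the first factor. It then identifies $\operatorname{Kur}_X(E)$ with the germ of the commuting variety $\mathcal C_{n,r}$ by the same harmonic-form computation you make. Singularity follows from a dichotomy: either the germ is reducible, or $\mathcal C_{n,r}$ is globally irreducible of dimension $r^2+(n-1)r<nr^2$ by a cited dimension formula for commuting varieties.

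\textbf{Your route.} You work with the joint DGLA directly. With flat metrics one has $R_E=0$, so $\bar\partial_{A(E)}$ is diagonal. On constant-coefficient representatives both $[\varphi,\psi]$ and $\varphi\lrcorner D^{1,0}_{\End E}B$ vanish. Hence the product structure and the freeness of the $T_X$-direction come straight out of the computation, with no splitting theorem and no BTT. You also replace the dimension formula by the elementary fact that a germ cut out by a nonzero ideal contained in $\mathfrak m^2$ is not smooth.

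\textbf{Comparison.} The paper's route gives the finer identification $\operatorname{Kur}(X,E)\simeq\operatorname{Kur}(X)\times(\mathcal C_{n,r},0)$ and the explicit excess $(n-1)r(r-1)$ of tangent dimension over dimension. Yours is shorter and self-contained.

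\textbf{The step you call hardest is immediate.} For flat metrics the constant-coefficient forms are exactly the harmonic forms. You have already shown they are closed under the bracket, so the inclusion is a DGLA quasi-isomorphism. Even more directly, $[\epsilon_1(t),\epsilon_1(t)]$ is harmonic, so the canonical Kuranishi series terminates and the obstruction map is exactly $\tfrac12[\epsilon_1(t),\epsilon_1(t)]$. Neither a Fourier-mode argument nor a Goldman--Millson formality theorem is needed; the latter concerns a de Rham DGLA, not this Dolbeault one. Your fallback is already a complete proof: by Proposition~\ref{prop:formal-vs-canonical}, unobstructedness would force $(-[\alpha,\alpha],0)$ to be $\bar\partial_{A(E)}$-exact. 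That is impossible for your explicit $\alpha$, since $[\alpha,\alpha]$ is a nonzero constant-coefficient form.

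\textbf{One correction.} Your stated reason for $\mathrm{At}(E)=0$ is wrong. The isomorphism $\End(L^{\oplus r})\simeq\mathcal O_X\otimes\mathfrak{gl}_r(\mathbb C)$ holds for every line bundle $L$, including those with $c_1(L)\neq0$, for which $\mathrm{At}(E)\neq0$. The correct reason is that $E$ carries a flat Hermitian metric, so $R_E=0$ and $\mathrm{At}(E)=[R_E]=0$. This matters: it is what makes $H^2(X,\End E)\to H^2(X,A(E))$ injective. Without it, your nonzero class in $H^2(X,\End E)$ could be killed in the joint obstruction space, as happens in Thomas's example.
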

	
Since $E$ is Hermitian flat
(i.e., it admits a Hermitian metric with vanishing
Chern curvature), the splitting theorem of
Kasuya--Purho \cite[Proposition~1.1]{KP26} gives
an isomorphism of analytic germs
$
\operatorname{Kur}(X,E)
\simeq
\operatorname{Kur}(X)\times\operatorname{Kur}_X(E),
$
where $\operatorname{Kur}_X(E)$ parametrizes deformations
of $E$ with $X$ fixed.
We identify this factor with the germ at the origin
of the variety of $n$ pairwise commuting $r\times r$
complex matrices, which is singular there.

\medskip
\medskip
\noindent{\textit{Remark.}}
While polishing this manuscript, we became aware of
Wan's preprint \cite{Wan26}.
Together with the splitting theorem of Kasuya--Purho,
his Theorem~1.5 also answers Question~\ref{q2}
affirmatively; see Remark~\ref{rem:wan-huybrechts} for more details.

	\subsection{Proof strategies for Theorems A  and  B}
We give three proofs of Theorem~\ref{main thm}, none of
which uses the Beauville--Bogomolov decomposition or
the minimal model program. The Tian--Todorov approach
also yields Theorem~\ref{main thm2}.

	\medskip
	\noindent(1) \textit{Tian--Todorov approach.}  
	By the analytic description of Huang \cite{Hua95},
	Chan--Suen \cite{CS16} and Liu--Tu \cite{LT18},
	together with Proposition  \ref{prop:formal-vs-canonical},
	it suffices to extend any first-order deformation to
	a formal solution of the Maurer--Cartan system
	\begin{equation*}
			\left\{
		\begin{aligned}
			\bar{\partial}\varphi(t)
			&= \dfrac{1}{2}\,[\varphi(t),\varphi(t)], \\[6pt]
			\bar{\partial}_{\End E} A(t)
			&=\varphi(t) \,\lrcorner\, R_E 
			+ \varphi(t) \,\lrcorner\, D^{1,0}_{\End E} A(t)
			- \dfrac{1}{2}\,[A(t),A(t)].
		\end{aligned}
		\right.
	\end{equation*}
	Here $\varphi(t)$ and $A(t)$ are formal series with values
	in $A^{0,1}(X,T_X^{1,0})$ and $A^{0,1}(X,\End E)$,
	respectively. A fixed Hermitian metric on $E$ determines
	its Chern curvature $R_E$ and the $(1,0)$-part
	$D_{\End E}^{1,0}$ of the induced connection on $\End E$.
	Unlike the first equation, the second involves both
	$\varphi(t)$ and $A(t)$.

	\medskip
	By \cite{IM19,Fel25} (see Proposition  \ref{reduction}),
	the assumption $H^2(X,\End^0E)=0$ reduces the problem
	to $(X,\det E)$. We carry out this reduction analytically
	by taking the trace of the second equation and setting
	$a(t):=\operatorname{tr}A(t)\in A^{0,1}(X)[[t]]$.

	\medskip
	First consider the torsion case. Choose $l\geq1$ with
	$lK_X\simeq\mathcal O_X$ and set $F:=(l-1)K_X$.
	Then $F$ is Hermitian flat, with flat Chern connection
	$\nabla_F=D_F^{1,0}+\bar\partial_F$.
	Fix a nowhere-vanishing section $\Omega_0$ of
	$K_X\otimes F$, viewed as an $F$-valued holomorphic
	$(n,0)$-form.
	
	\medskip
	Following an elegant idea of Liu--Rao--Wan \cite{LRW19},
	further developed in \cite[Appendix  I]{Zha26},
	we introduce an ``\textit{auxiliary}''   formal power series
	$$
	\Omega(t)=\Omega_0+\sum_{k\geq1}\Omega_k t^k,
	\quad
	\Omega_k\in A^{n,0}(X,F).
	$$
	In those works, an SNC divisor $D$ deforms locally trivially
	with $X$, and the deformation of the pair is governed by
	a single Maurer--Cartan equation for
	$\varphi(t)\in A^{0,1}(X,T_X^{1,0}(-\log D))$.
	Our construction takes $D=\varnothing$ \textit{but requires
	an additional equation for $a(t)$}, which must be solved
	\textit{simultaneously} with the complex-structure equation.

	\medskip
	The bundle-valued Tian--Todorov identity by Liu--Rao \cite{LR12}
(see Lemma  \ref{Tian-Todorov})  reduces this task
	to the following three equations:
	\begin{equation*}
		\begin{cases}
			\bigl(\bp_F + D_F^{1,0}\circ \varphi(t)\lrcorner\bigr)\Omega(t)=0,\\[0.8em]
		\bigl(\bp_F + \dfrac{1}{2}D_F^{1,0}\circ \varphi(t)\lrcorner\bigr)\bigl(\varphi(t)\lrcorner\Omega(t)\bigr)=0,\\[0.8em]
		\Bigl(		\bar\partial a(t)-\varphi(t) \lrcorner \tr R_E- \varphi(t)\lrcorner \partial a(t)\Bigr)\wedge\Omega(t)=0,\\
		\end{cases}
	\end{equation*}
	where  $\lrcorner$ denotes the contraction map. 
It is well-known that Fujiki manifolds satisfy the flat-twisted
$\p\bp$-lemma for \textit{every} Hermitian flat bundle
(see e.g., \cite[Proposition  A.1]{HZ26} for a proof).
This allows us to solve the system \textit{order by order}  and prove
Theorem  \ref{main thm}.
However, on an arbitrary $\p\bp$-manifold, the flat-twisted
$\p\bp$-lemma need not hold for every Hermitian flat bundle;
see  \cite[Theorem A]{HZ26}.

\medskip
For Theorem  \ref{main thm2}, the $K$-trivial assumption
allows us to take $F=\mathcal O_X$, and the same construction
requires only the three weak $\p\bp$-conditions
$\mathbb S^{n,1}$, $\mathbb S^{n,2}$ and
$\mathbb B^{n-1,2}$.

	\medskip
\medskip
 \noindent(2) \textit{The approaches of Iacono--Manetti.}
We also adapt both approaches of Iacono--Manetti in
\ref{IMapproach} to prove Theorem  \ref{main thm}.
For the $\mathbb P^1$-bundle argument, the key is to extend
the weak $\p\bp$-lemma for weighted logarithmic forms
in \cite[Theorem  B]{Zha26} to Fujiki manifolds.
We achieve this using K\"ahler modifications and logarithmic
Serre duality (Theorem  \ref{dbar-Fujiki}).
This yields unobstructedness of locally trivial deformations
of log Calabi--Yau pairs in the Fujiki setting
(Theorem  \ref{thm-fujiki-inobstr}).
As an application of independent interest, we also obtain
a new analytic proof of Fujino's injectivity theorem
\cite[Theorem  1.2]{Fuj17}; see Theorem  \ref{Fujino}.

\medskip
The $T^1$-lifting approach proves the $K$-trivial
$\p\bp$-manifold subcase of Theorem  \ref{main thm2}.
Passing to the canonical finite \'etale cover and applying
Wells's   pull-back injectivity theorem \cite{Wel74}
(see Lemma  \ref{wells}) then yields Theorem  \ref{main thm}.
For Theorem  \ref{main thm2} in full, the $T^1$-lifting
property follows a posteriori from our Tian--Todorov proof.
However, an independent proof through the cup-product
argument seems difficult: it remains unclear how to
establish the required lifting properties for the relevant
kernels and cokernels using only the three weak
$\p\bp$-conditions.

	\medskip
	\medskip
	This paper is organized as follows.
	\tableofcontents

	\section{Joint deformations of $(X,E)$}\label{section 2}
	In this section, following \cite{Hua95,CS16,LT18}, we review the
	deformation theory of a pair $(X,E)$, where $X$ is an
	$n$-dimensional compact complex manifold and $E\to X$ is a
	holomorphic vector bundle of rank $r$. We mainly follow the notation
	and analytic presentation of Chan--Suen \cite{CS16}. A joint deformation of $(X,E)$ can be viewed as a \textit{simultaneous deformation} of the holomorphic structures on $X$ and $E$. More precisely:
	
	\begin{definition}\label{def-joint defor}
		A \textit{family of (joint) deformations} of the pair $(X,E)$ over a  small disc $\mathbb D^m \subset \mathbb{C}^m$ centred at the origin consists of the following data:
		\begin{enumerate}[{$\centerdot$}]
			\item a proper smooth morphism $\pi: \mathcal{X} \to\mathbb D^m$
			with an isomorphism $ \pi^{-1}(0)\simeq X$;
			\item a holomorphic vector bundle $\mathcal{E} \to \mathcal{X}$ such that $\mathcal{E}|_{\pi^{-1}(0)} \simeq E$.
		\end{enumerate}
	\end{definition}
	For any $t \in\mathbb D^m$, we denote by $(X_t, E_t)$ the fibre $\pi^{-1}(t)$ together with the restriction $\mathcal{E}|_{\pi^{-1}(t)}$, respectively. Then in particular, $E_t$ is a  holomorphic vector bundle of rank $r$ over $X_t$.
	\subsection{Operators $ \overline D_t$ and $A_t$}
	Let $\left\{(X_t,E_t)\right\}_{t\in\mathbb D^m}$ be a family of deformations of the pair $(X,E)$. By Definition \ref{def-joint defor}, it in particular induces a family of deformations of the compact complex manifold $X$, which can be described by an analytic family of the so-called \textit{Beltrami differentials} $$\varphi_t\in A^{0,1}(X,T_X^{1,0})$$ satisfying the integrability condition (also called the Maurer--Cartan equation for the complex structures):
	\begin{equation*}
		\bp\varphi_t=\frac{1}{2}\,[\varphi_t,\varphi_t].
	\end{equation*}
	
	Furthermore, by Ehresmann's theorem, after possibly shrinking the disc, we may identify $\mathcal X$  with $X\times\mathbb D^m$ diffeomorphically. Let $f$ be a smooth function locally defined on $X_0$.  Then by e.g. \cite[$\S$4, Propsition 1.2]{MK71}, \cite[Proposition 2.7]{RZ18} and \cite[Proposition 3.1]{CS16}, \begin{equation}\label{Xt-hol}
		\text{$f$ is holomorphic w.r.t.  the complex structure of $X_t$} \;\Longleftrightarrow\; (\bar\partial - \varphi_t \lrcorner \partial)f = 0,
	\end{equation} 
	where $\lrcorner$ denotes the contraction operator.
	
	\medskip
	Let $\{e_i(t)\}_{1\leq i\leq r}$ be local holomorphic frames of $E_t$.  
	Thanks to  Ehresmann's theorem again, these can be regarded as local smooth frames of the fixed bundle $E$ over $X$.  
	We define an operator  
	\[
	\overline D_t : A^{0,q}(X,E) \rightarrow A^{0,q+1}(X,E)
	\]  
	by 
	\[
	 \overline D_t\left(\sum_{i=1}^r \alpha^i \otimes e_i(t)\right) := \sum_{i=1}^r \bigl( \bar\partial - \varphi_t \lrcorner \partial \bigr)\alpha^i \otimes e_i(t),
	\]
	where  
	\(\alpha^i\) are local smooth $(0,q)$-forms.

	We have the following properties thanks to the criterion \eqref{Xt-hol}:
	\begin{proposition}[{\cite[$\S$3.2]{CS16}}]
		\begin{enumerate}
			\item $ \overline D_t$ is a well-defined linear operator, i.e., it is independent of the choice of the local holomorphic frames of $E_t$. Furthermore, it satisfies the Leibniz rule:
			$$ \overline D_t (\alpha\otimes s)=\bigl (\bp -\varphi_t\l\p\bigr)\alpha\otimes s + (-1)^{|\alpha|} \alpha\wedge  \overline D_ts$$
			for any $s\in A^0(X,E)$ and $\alpha\in A^{0,\star}(X)$.  	Here $|\alpha|$ denotes the \textit{degree} of $\alpha$.
			\item For any $s\in A^0(X,E),$
			$$ \overline D_t(s)=0\Longleftrightarrow \bp_{E_t} (s)=0,$$
			where $\bp_{E_t}$ is the Dolbeault operator induced by the 
			holomorphic structure of $E_t$.
			\item $ \overline D_t^2=0.$
		\end{enumerate}
	\end{proposition}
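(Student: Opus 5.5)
The three assertions are local statements about the operator $\overline D_t$. I will prove all of them by working in local holomorphic frames of $E_t$ and reducing each one to the scalar criterion \eqref{Xt-hol}. The key background fact is that the operator $\bar\partial_t:=\bar\partial-\varphi_t\lrcorner\partial$ on $A^{0,\star}(X)$ is a derivation satisfying $\bar\partial_t^2=0$. I will take this from the references cited for \eqref{Xt-hol}, where it comes from the integrability condition $\bar\partial\varphi_t=\frac12[\varphi_t,\varphi_t]$, together with the criterion that a function $f$ is $X_t$-holomorphic if and only if $\bar\partial_t f=0$.

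\textbf{Well-definedness.} Let $\{e_i(t)\}$ and $\{e'_j(t)\}$ be two local holomorphic frames of $E_t$ on overlapping open sets, related by $e'_j(t)=\sum_i g_{ij}e_i(t)$. Each $g_{ij}$ is $X_t$-holomorphic, so $\bar\partial_t g_{ij}=0$ by \eqref{Xt-hol}. Writing $\alpha=\sum_j\beta^j\otimes e'_j(t)=\sum_i\bigl(\sum_j g_{ij}\beta^j\bigr)\otimes e_i(t)$, the Leibniz rule for $\bar\partial_t$ gives
\[
\bar\partial_t\Bigl(\sum_j g_{ij}\beta^j\Bigr)=\sum_j g_{ij}\,\bar\partial_t\beta^j .
\]
Hence the two local definitions agree, and $\overline D_t$ is globally defined and linear.

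\textbf{Leibniz rule.} Write $s=\sum_i s^i e_i(t)$ with smooth functions $s^i$. Then $\alpha\otimes s=\sum_i(\alpha s^i)\otimes e_i(t)$, and
\[
\bar\partial_t(\alpha s^i)=(\bar\partial_t\alpha)s^i+(-1)^{|\alpha|}\alpha\wedge\bar\partial_t s^i .
\]
Summing over $i$ gives exactly the stated formula.

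\textbf{Assertion (2).} Write $s=\sum_i s^i e_i(t)$. Then $\overline D_t s=0$ if and only if $\bar\partial_t s^i=0$ for every $i$, which by \eqref{Xt-hol} holds if and only if every $s^i$ is $X_t$-holomorphic. Since $\{e_i(t)\}$ is a holomorphic frame of $E_t$, this is equivalent to $s$ being a holomorphic section of $E_t$, that is, $\bar\partial_{E_t}s=0$. The frames $e_i(t)$ are only smooth frames of the fixed underlying bundle $E$; the identification of these smooth frames with $E_t$ is the one given by Ehresmann's theorem.

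\textbf{Assertion (3).} This is the main point. In a local holomorphic frame we have $\overline D_t^2\bigl(\sum_i\alpha^i\otimes e_i(t)\bigr)=\sum_i\bar\partial_t^2\alpha^i\otimes e_i(t)$, so it suffices to show $\bar\partial_t^2=0$ on $A^{0,\star}(X)$.

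I will justify this using the holomorphic coordinates of $X_t$. Near any point, integrability provides $X_t$-holomorphic coordinates $w_1,\dots,w_n$. The forms $\bar\partial_t\bar w_k$ are then, up to the standard identification, the $(0,1)$-coframe of $X_t$, and $\bar\partial_t$ corresponds to $\bar\partial_{X_t}$ composed with the isomorphism $A^{0,\star}(X)\simeq A^{0,\star}(X_t)$ induced by $\varphi_t$. Under this correspondence $\bar\partial_t^2=0$ follows from $\bar\partial_{X_t}^2=0$.

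If this identification turns out to be delicate at the level of forms rather than functions, I will instead verify $\bar\partial_t^2=0$ directly. On functions, the expansion
\[
\bar\partial_t^2 f=-\bigl(\bar\partial\varphi_t-\tfrac12[\varphi_t,\varphi_t]\bigr)\lrcorner\partial f
\]
vanishes by the Maurer--Cartan equation. The derivation property then extends the vanishing to all of $A^{0,\star}(X)$, because $\bar\partial_t$ acts on the $(0,1)$-forms $d\bar z_k$ compatibly: one checks $\bar\partial_t^2(d\bar z_k)=0$ from the same expansion. The sign bookkeeping in this expansion is the main obstacle, and I would follow \cite[Prop.~3.1]{CS16} for it.
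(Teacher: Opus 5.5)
Your proposal is correct and follows the route the paper indicates; the paper gives no argument beyond citing \cite[\S3.2]{CS16} and the criterion \eqref{Xt-hol}. As you do, one works in local holomorphic frames of $E_t$, applies \eqref{Xt-hol} to the transition functions and to the coefficients of $s$, and reduces (3) to $\bar\partial_t^2=0$, which comes from the Maurer--Cartan equation. One simplification: $\bar\partial_t(d\bar z_k)=0$ outright, so the derivation $\bar\partial_t^2$ only needs checking on functions, where $\bar\partial_t^2 f=\sum_{k<j}[\bar L_k,\bar L_j](f)\,d\bar z_k\wedge d\bar z_j$ for the vector fields $\bar L_j$ spanning $T^{0,1}X_t$ defined by $\bar\partial_t f=\sum_j\bar L_j(f)\,d\bar z_j$; these brackets vanish by integrability, so no delicate sign bookkeeping is needed.
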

	
	Fix an arbitrary smooth Hermitian metric $h_E$ on $E$. Let $\nabla_E$ be the Chern connection on $(E,h_E)$, which decomposes into its $(1,0)$- and $(0,1)$-parts as
	$
	\nabla_E = D^{1,0}_E + \bar\partial_E.
	$
	In the same sprit of \eqref{Xt-hol}, one naturally  to compare the operator $ \overline D_t$ and $\bp_E- \varphi_t\lrcorner  D^{1,0}_E $.
	\begin{proposition}[{\cite[Proposition 3.9]{CS16}}]
		$A_t:= \overline D_t-\bp_E+\varphi_t\lrcorner D^{1,0}_E \in A^{0,1}(X, \End E).$
	\end{proposition}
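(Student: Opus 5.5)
We need to show that $A_t:=\overline D_t-\bp_E+\varphi_t\lrcorner D^{1,0}_E$ is $A^0(X)$-linear, hence a zeroth-order operator, i.e. an element of $A^{0,1}(X,\End E)$. The plan is to compute $A_t(fs)$ for a smooth function $f$ and a section $s\in A^0(X,E)$, using the Leibniz rules of the three operators. This reduces everything to the scalar identity defining $\overline D_t$ on functions.

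First I record the three Leibniz rules. By part (1) of the preceding proposition, $\overline D_t(fs)=(\bp-\varphi_t\lrcorner\p)f\otimes s+f\,\overline D_t s$. Since $\bp_E$ is the $(0,1)$-part of the Chern connection, $\bp_E(fs)=\bp f\otimes s+f\bp_E s$. Since $D^{1,0}_E$ is the $(1,0)$-part, $D^{1,0}_E(fs)=\p f\otimes s+fD^{1,0}_E s$. Contracting the last identity with $\varphi_t$, and using that contraction is $A^0(X)$-linear, gives
\[
\varphi_t\lrcorner D^{1,0}_E(fs)=(\varphi_t\lrcorner\p f)\, s+f\,\varphi_t\lrcorner D^{1,0}_E s .
\]

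Summing the three rules with signs, the derivative terms in $f$ are $(\bp f-\varphi_t\lrcorner\p f)s-\bp f\, s+(\varphi_t\lrcorner\p f)s=0$. Hence $A_t(fs)=fA_t(s)$. A first-order differential operator $A^0(X,E)\to A^{0,1}(X,E)$ that is $C^\infty(X)$-linear is given by a tensor, namely a section of $\Lambda^{0,1}T^*X\otimes\End E$. This proves $A_t\in A^{0,1}(X,\End E)$. It also matches the natural extension to $A^{0,q}(X,E)$ via $A_t(\alpha\otimes s)=(-1)^{|\alpha|}\alpha\wedge A_t s$, since the $(\bp-\varphi_t\lrcorner\p)\alpha$ terms cancel in the same way.

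The only delicate point is making sure each operator is well defined on the same space of smooth sections. For $\overline D_t$ this uses the frame-independence from part (1) of the preceding proposition, which itself rests on \eqref{Xt-hol}: transition functions between holomorphic frames of $E_t$ are annihilated by $\bp-\varphi_t\lrcorner\p$. Once that is granted, the remaining step is purely the algebraic cancellation above, and I expect no real obstacle.
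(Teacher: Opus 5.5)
Your proposal is correct and is exactly the paper's argument: the paper's proof is the single observation $A_t(fs)=fA_t(s)$ for $f\in A^0(X)$ and $s\in A^0(X,E)$, and you write out the cancellation of the $\bp f$ and $\varphi_t\lrcorner\p f$ terms coming from the three Leibniz rules. Your closing remark about the extension to $A^{0,q}(X,E)$ is not needed for the statement, and its signs depend on the convention for contracting $\varphi_t$ into wedge products, so you can drop it.
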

	\begin{proof}
		It is easy to show that 
		$$A_t (fs)=fA_t (s), \quad\text{for }f\in A^0(X)\,\,\text{and}\,\,s\in A^0(X,E).$$ 
	\end{proof}

	For the converse direction, assume that we are given elements $A_t\in A^{0,1}(X, \End E)$ together with $\varphi_t\in A^{0,1}(X, T_X^{1,0})$ parametrized by $t\in \mathbb D^m$. One then defines an operator $ \overline D_t: A^{0,0}(X,E)\rightarrow A^{0,1}(X,E)$ by setting
	\[
	 \overline D_t := \bar\partial_E - \varphi_t \lrcorner D_E^{1,0} + A_t.
	\]
	We also extend $ \overline D_t: A^{0,q}(X,E)\rightarrow A^{0,q+1}(X,E)$ by
	\[
	 \overline D_t (\alpha\otimes s) := \bigl(\bar\partial - \varphi_t \lrcorner \partial\bigr)\alpha \otimes s + (-1)^{|\alpha|}\, \alpha \wedge ( \overline D_t s),
	\]
	which satisfies the Leibniz rule.
	
	We then have 
	\begin{equation*}\label{Atvt}
		 \overline D_t^2 = 0 \;\Longrightarrow\; \text{the data }(A_t,\varphi_t)\text{ defines a holomorphic-pair } (X_t,E_t),
	\end{equation*}
	thanks to \cite[Theorem 3.12]{CS16}.

	\subsection{Maurer--Cartan system and DGLA for the pair}
	Let $R_E\in A^{1,1}(X,\End E)$ be the Chern curvature of $(E,h_E)$.
	The Chern connection $\nabla_E$ induces a connection on $\End E$,
	defined by
	\begin{equation*}
		(\nabla_{\End E}A)(s)
		=
		\nabla_E(As)-A(\nabla_Es).
	\end{equation*}
	We decompose
	\begin{equation}\label{EndE}
		\nabla_{\End E} \;=\; D^{1,0}_{\End E} \;+\; \bar\partial_{\End E}.
	\end{equation}
	By the Bianchi identity (see, e.g.\
	\cite[Chapter  V, Theorem  4.2]{Deme}),
	\begin{equation}\label{bianchi}
		\nabla_{\End E}R_E=0.
	\end{equation}
	In particular, $\bar\partial_{\End E}R_E=0$, and hence
	$
	[R_E]_{\bar\partial_{\End E}}
	\in
	H^{1,1}(X,\End E).
	$
	This class is independent of the choice of $h_E$ and is called the
	\textit{Atiyah class} of $E$, denoted by $\operatorname{At}(E)$.
	
	By the computations in the previous subsection, expanding $ \overline D_t^2$ leads to:
	\begin{proposition}[{\cite[p. 68, (16)-(17)]{Hua95}, \cite[\S3.3 (2)]{CS16}, \cite[Theorem 4.9]{LT18}}]\label{prop:MC-pair}
		Let $(X,E)$ be a holomorphic pair and let
		\[
		\{(A_t,\varphi_t)\}_{t\in\mathbb D^m}
		\subset A^{0,1}(X,\End E)\oplus A^{0,1}(X,T_X^{1,0})
		\]
		be a family depending holomorphically on $t$. Then $(A_t,\varphi_t)$ defines a holomorphic family of pairs $(X_t,E_t)$ if and only if it satisfies the following  Maurer--Cartan system, consisting  of two equations:
		\begin{equation}\label{MC-pair}
			\left\{
			\begin{aligned}
				\bar{\partial}\varphi_t 
				&= \dfrac{1}{2}\,[\varphi_t,\varphi_t], \\[6pt]
				\bar{\partial}_{\End E} A_t 
				&=\varphi_t \,\lrcorner\, R_E 
				+ \varphi_t \,\lrcorner\, D^{1,0}_{\End E} A_t
				- \dfrac{1}{2}\,[A_t,A_t].
			\end{aligned}
			\right.
		\end{equation}
	\end{proposition}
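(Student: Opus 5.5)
The plan is to compute $\overline D_t^2$ directly and read off its components. With
\[
\overline D_t=\bar\partial_E-\varphi_t\lrcorner D^{1,0}_E+A_t ,
\]
extended to $A^{0,\star}(X,E)$ by the Leibniz rule, $\overline D_t^2$ is $A^0(X)$-linear in the section variable. Hence it suffices to evaluate it on $f s$ with $f\in A^0(X)$ and $s\in A^0(X,E)$.

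We first separate the scalar part. By the Leibniz rule,
\[
\overline D_t^2(fs)=\bigl(\bar\partial-\varphi_t\lrcorner\partial\bigr)^2 f\cdot s+f\,\overline D_t^2 s ,
\]
the mixed terms cancel because of the sign $(-1)^{|\alpha|}$. The classical computation (e.g.\ \cite{MK71}) gives
\[
(\bar\partial-\varphi_t\lrcorner\partial)^2 f=\Bigl(\bar\partial\varphi_t-\tfrac12[\varphi_t,\varphi_t]\Bigr)\lrcorner\partial f .
\]
Since $f$ is arbitrary, integrability of $\overline D_t$ forces the first equation of \eqref{MC-pair}. Conversely, given that equation, $\overline D_t^2$ reduces to the tensorial operator $s\mapsto \overline D_t^2 s$ on $A^0(X,E)$.

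Next we expand $\overline D_t^2 s$ term by term:
\begin{itemize}
\item[(a)] $\bar\partial_E^2=0$ by holomorphicity of $E$.
\item[(b)] The cross terms $\bar\partial_E(\varphi_t\lrcorner D^{1,0}_E s)+\varphi_t\lrcorner D^{1,0}_E(\bar\partial_E s)$ are computed using $D^{1,0}_E\bar\partial_E+\bar\partial_E D^{1,0}_E=R_E$. This produces $(\bar\partial\varphi_t)\lrcorner D^{1,0}_E s$ together with the curvature term $-\varphi_t\lrcorner R_E\,s$, up to sign conventions fixed in \cite{CS16}.
\item[(c)] $(\varphi_t\lrcorner D^{1,0}_E)^2 s$ yields $\tfrac12[\varphi_t,\varphi_t]\lrcorner D^{1,0}_E s$. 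It also yields a term $\varphi_t\lrcorner\varphi_t\lrcorner R^{2,0}$, which vanishes because the Chern curvature is of type $(1,1)$.
\item[(d)] The terms involving $A_t$ assemble as $\bar\partial_{\End E}A_t-\varphi_t\lrcorner D^{1,0}_{\End E}A_t+\tfrac12[A_t,A_t]$, acting on $s$. Here we use the definition of the induced connection on $\End E$ in \eqref{EndE}.
\end{itemize}
Collecting, the terms from (b) and (c) that involve $D^{1,0}_E s$ combine into $\bigl(\bar\partial\varphi_t-\tfrac12[\varphi_t,\varphi_t]\bigr)\lrcorner D^{1,0}_E s$. This vanishes by the first equation. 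What remains is the $\End E$-valued $(0,2)$-form
\[
\bar\partial_{\End E}A_t-\varphi_t\lrcorner R_E-\varphi_t\lrcorner D^{1,0}_{\End E}A_t+\tfrac12[A_t,A_t].
\]
Therefore $\overline D_t^2=0$ is equivalent to \eqref{MC-pair}.

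To finish, we combine this with the facts recalled above. Any family $(X_t,E_t)$ yields $(\varphi_t,A_t)$ with $\overline D_t^2=0$ by property (3) of $\overline D_t$ and \cite[Proposition 3.9]{CS16}. Conversely, $\overline D_t^2=0$ produces a holomorphic pair by \cite[Theorem 3.12]{CS16}, and holomorphic dependence on $t$ is inherited.

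The main obstacle is step (c) together with the $(2,0)$-curvature bookkeeping. One must check carefully that the contraction $\varphi_t\lrcorner$ interacts with $D^{1,0}_E$ to give exactly the Frölicher--Nijenhuis bracket with coefficient $\tfrac12$, with no leftover torsion-type terms. For this I would work in a local holomorphic frame of $E$. There $D^{1,0}_E=\partial+\theta$ with $\theta=h^{-1}\partial h$ and $R_E=\bar\partial\theta$, which reduces everything to the scalar identity plus the terms in $\theta$.
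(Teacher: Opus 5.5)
Your proposal is correct and follows the same route as the paper. The paper gives no separate proof beyond expanding $\overline D_t^2$ for $\overline D_t=\bar\partial_E-\varphi_t\lrcorner D^{1,0}_E+A_t$ and appealing to \cite[Proposition 3.9, Theorem 3.12]{CS16}. Your bookkeeping is accurate up to harmless overall signs: the identity $\bar\partial_E D^{1,0}_E+D^{1,0}_E\bar\partial_E=R_E$, the vanishing of the $(2,0)$-curvature term, and the assembly of the $A_t$-terms into $\bar\partial_{\End E}A_t-\varphi_t\lrcorner D^{1,0}_{\End E}A_t+\frac12[A_t,A_t]$.

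The only slip is your opening claim that $\overline D_t^2$ is $A^0(X)$-linear. Your own formula $\overline D_t^2(fs)=(\bar\partial-\varphi_t\lrcorner\partial)^2f\cdot s+f\,\overline D_t^2 s$ shows this holds only once the first equation is imposed, which is how you actually use it later.
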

	
	\medskip
	Recall that (see e.g.  \cite[p. 145]{Man22}) a  \textit{differential-graded Lie algebra} (\dgla) is the data of a differential-graded vector space $(\mathrm{\mathbf L},\mathrm{\mathbf d})$ together with a bilinear map $\left[\cdot,\cdot\right]:\LLL\times\LLL\rightarrow\LLL$ (called \textit{bracket}) of degree $0$, such that the following conditions hold:
	\begin{enumerate}[{(i)}]
		\item  (graded skew-symmetry)\,\, $[a,b] = -(-1)^{|a||b|} [b,a]$;
		\item (graded Jacobi identity)\,\, $[a,[b,c]] = [[a,b],c] + (-1)^{|a||b|} [b,[a,c]]$;
		\item (graded Leibniz rule)\,\, $\mathrm{\mathbf{d}}[a,b] = [\mathrm{\mathbf{d}} a, b] + (-1)^{|a|} [a, \mathrm{\mathbf{d}} b]$.
	\end{enumerate}

	\medskip
	By \cite[$\S$1.1]{Hua95} and also \cite[$\S$3]{CS16},
	set
	\[
	A(E) := \End E\oplus T_X^{1,0}
	\]
	as a smooth complex vector bundle over $X$. For $(A,\varphi)\in A^{0,p}(X,A(E))$ and $(B,\psi)\in A^{0,q}(X,A(E))$, define a bracket
	\[
	[\cdot,\cdot] : A^{0,p}(X,A(E))\times A^{0,q}(X,A(E)) \longrightarrow A^{0,p+q}(X,A(E))
	\]
	by
	\[
	[(A,\varphi),(B,\psi)]
	\;:=\;
	\bigl(\,\varphi\lrcorner\,D^{1,0}_{\End E} B
	\;-\;(-1)^{pq}\,\psi\lrcorner\,D^{1,0}_{\End E} A
	\;-\;[A,B]\,,\;[\varphi,\psi]\,\bigr).
	\]
	
	Define
	\[
	\bar\partial_{A(E)}^{\Im} : A^{0,p}(X,A(E)) \longrightarrow A^{0,p+1}(X,A(E))
	\]
	on $A(E)$ by
	\[
	\bar\partial_{A(E)}^{\Im} :=
	\begin{pmatrix}
		\bar\partial_{\End E} & \Im \\
		0 & \bar\partial_{T_X}
	\end{pmatrix},
	\]
	where $\Im\in A^{0,1}\bigl(X,\mathrm{Hom}(T_X^{1,0},\End E)\bigr)$ is defined by
	\[
	\Im\wedge \varphi := (-1)^p\,\varphi\lrcorner R_E,
	\quad \varphi\in A^{0,p}(X,T_X^{1,0}).
	\]

	One checks that $\bigl(A(E),\bar\partial_{A(E)}^{\Im}\bigr)$ defines a holomorphic vector bundle over $X$, and that its holomorphic structure depends only on the Dolbeault cohomology class $[\Im]$, hence only on $\mathrm{At}(E)$,  the Atiyah class of $E$. Thus we will simply write $\bar\partial_{A(E)}$ and refer to $(A(E),\bp_{A(E)})$ as the \textit{Atiyah extension} of $E$.
	
	\medskip
	We can verify that $$(A^{0,\bullet}(X,A(E)),\bar\partial_{A(E)},[\cdot,\cdot])$$ is a DGLA. 
	Moreover, the Maurer--Cartan system \eqref{MC-pair} can be packaged into a \textit{single Maurer--Cartan equation in $A(E)$}:
	\begin{equation}\label{one-MC}
		\bar\partial_{A(E)}(A_t,\varphi_t)
		=
		\frac{1}{2}\,[(A_t,\varphi_t),(A_t,\varphi_t)].
	\end{equation}
	
	\begin{remark}\label{rmk-iso}
		In the algebraic setting, the role of the Atiyah extension $A(E)$ is played by the sheaf $\mathcal D^1(E)$ of scalar differential operators of order at most one on $E$ (see \cite{Ser06} for the line bundle case and \cite{Mar09} for the general case). Recall that a \textit{differential operator of order $1$} on a holomorphic vector bundle $E$ of rank $r$ is a linear map $P\colon \Omega^0(E)\to \Omega^0(E)$ which, locally, takes the form
		\[
		P = (g_{ij}) + \sum_{k=1}^n (h_{ij}^k) \frac{\partial}{\partial z^k},
		\]
		where $(g_{ij})$ and $(h_{ij}^k)$ are $r\times r$ matrices of holomorphic functions. Such an operator is said to have \textit{scalar principal symbol} if $(h_{ij}^k) = h^k \cdot \mathrm I_{r\times r}$ for some local holomorphic functions $h^k$. There exists an exact sequence
		\[
		0 \longrightarrow \operatorname{End}E \longrightarrow \mathcal D^1(E) \xrightarrow[\quad\,]{\sigma} T_X^{1,0} \longrightarrow 0,
		\]
		where the symbol map $\sigma$ is locally given by $\sigma(P)=\sum_{k=1}^n h^k \frac{\partial}{\partial z_k}$. It was shown in e.g. \cite[Proposition A.2]{CS16} that $\mathcal D^1(E)$ carries a natural holomorphic structure, under which it is isomorphic to the Atiyah extension $A(E)$. Therefore, the DGLA $(A^{0,\bullet}(X,A(E)), \bar\partial_{A(E)}, [\cdot,\cdot])$ above is naturally isomorphic to the one obtained via the algebraic construction.
		
	\end{remark}

	\subsection{Kuranishi family}
	For a holomorphic pair $(X,E)$, the existence of the corresponding Kuranishi family was proved in \cite[Theorem 1.8]{Hua95} and \cite[\S\S5-6]{CS16}. We now follow the analytic construction of Chan--Suen in this subsection.

	Choose any smooth Hermitian metrics $\omega$ on $X$ and $g$ on $A(E)$.  
	By Hodge theory on $A^{0,\bullet}(X,A(E))$, let $\mathcal H$ (resp.\ $\mathcal G$, $\bar\partial_{A(E)}^*$) denote the harmonic projection (resp.\ the Green operator, the formal adjoint of $\bar\partial_{A(E)}$) with respect to $\omega$ and $g$.

	Let $\{\eta_1,\dots,\eta_m\}$ be a basis of $H^1(X,A(E))\simeq \mathcal H^1$, and set
	\begin{equation}\label{e1}
		\epsilon_1(t) := \sum_{i=1}^m t_i\,\eta_i \in \mathcal{H}^1.
	\end{equation}
	Consider the non-linear equation
	\begin{equation}\label{eq:Kuranishi-fixed}
		\epsilon(t) \;=\; \epsilon_1(t) + \frac12\,
		\bar\partial_{A(E)}^{*}\,\mathcal{G}\bigl([\epsilon(t),\epsilon(t)]\bigr).
	\end{equation}
	Then there exists a unique canonical formal power series  solution of \eqref{eq:Kuranishi-fixed},
	\begin{equation}\label{good}
		\epsilon^{\mathrm{can}}(t)
		=
		\sum_{j\ge 1}\epsilon^{\mathrm{can}}_j(t)
		\,\in\, A^{0,1}(X,A(E)),
	\end{equation}
	where each $\epsilon^{\mathrm{can}}_j(t)$ is homogeneous of degree $j$ in $t.$ Indeed, this is clear since 
	\begin{equation}\label{epsilon2}
		\begin{aligned}
			\epsilon^{\mathrm{can}}_2(t) &= \frac{1}{2} \bp^*_{A(E)}\mathcal{G} \bigl[\epsilon_1(t), \epsilon_1(t)\bigr], \\
			\epsilon^{\mathrm{can}}_3(t) &= \frac{1}{2} \bp^*_{A(E)}\mathcal{G} \left([\epsilon_1(t),\epsilon^{\mathrm{can}}_2(t)]+[\epsilon^{\mathrm{can}}_2(t),\epsilon_1(t)]\right), \\
			&\vdots \\
			\epsilon^{\mathrm{can}}_N(t) &= \frac{1}{2} \bp^*_{A(E)} \mathcal{G} \Biggl( \bigl[\epsilon_1(t),\epsilon^{\mathrm{can}}_{N-1}(t)\bigr] + \bigl[\epsilon^{\mathrm{can}}_{N-1}(t),\epsilon_1(t)\bigr] \\
			&\qquad\qquad\qquad + \sum_{k=2}^{N-2} \bigl[\epsilon^{\mathrm{can}}_k(t), \epsilon^{\mathrm{can}}_{N-k}(t)\bigr] \Biggr), \quad N\ge 4.
		\end{aligned}
	\end{equation}
	Another important property is that, by standard elliptic regularity, 
	$\epsilon^{\mathrm{can}}(t)$ depends analytically on $t$ for \textit{sufficiently small $|t|$}.
	Moreover,
	\begin{equation*}\label{eq:MC-equiv}
		\begin{aligned}
			&\epsilon^{\mathrm{can}}(t) \text{ satisfies the Maurer--Cartan equation }\\
			&\qquad\quad\;\;\;\;\bar\partial_{A(E)}\epsilon^{\mathrm{can}}(t) = \frac12\,[\epsilon^{\mathrm{can}}(t),\epsilon^{\mathrm{can}}(t)]\\[4pt]
			&\qquad\qquad\Longleftrightarrow\quad 
			\mathcal{H}\bigl([\epsilon^{\mathrm{can}}(t),\epsilon^{\mathrm{can}}(t)]\bigr) = 0 .
		\end{aligned}
	\end{equation*}
	
	The associated \textit{Kuranishi obstruction map} is
	\[
	\textrm{Ob}_{(X,E)}(t) : U \subset H^1(X,A(E)) \longrightarrow H^2(X,A(E)),\quad
	t \longmapsto \mathcal H\bigl([\epsilon^{\mathrm{can}}(t),\epsilon^{\mathrm{can}}(t)]\bigr),
	\]
	defined on a suitable small neighborhood $U$ of $0\in H^1(X,A(E))$. 
	The \emph{Kuranishi space} of the pair \((X,E)\) is defined to be the analytic subspace
	\[
	\operatorname{Kur}(X,E)
	:=\textrm{Ob}^{-1}_{(X,E)} (0)=
	\left\{
	t\in  U
	\ \middle|\
	\mathcal H\bigl(
	[\epsilon^{\mathrm{can}}(t),\epsilon^{\mathrm{can}}(t)]
	\bigr)=0
	\right\}
	\]
	
	For each \(s\in\operatorname{Kur}(X,E)\), the element
	\(
	\epsilon^{\mathrm{can}}(s)
	=
	\bigl(A_s^{\mathrm{can}},\varphi_s^{\mathrm{can}}\bigr)
	\)
	satisfies the Maurer--Cartan system \eqref{MC-pair} and hence, by Proposition \ref{prop:MC-pair}, determines a holomorphic pair \((X_s,E_s)\). As \(s\) varies, these pairs form the \textit{Kuranishi family}
	\(
	\left\{(X_s,E_s)\right\}_{s\in\operatorname{Kur}(X,E)}.
	\)
	
	\begin{remark}
		The above construction involves the choice of Hermitian metrics, and hence the resulting analytic space \(\operatorname{Kur}(X,E)\) is \textit{not uniquely defined}. Nevertheless, the germs at the origin arising from different choices of Hermitian metrics are (non-canonically) isomorphic. We denote this 
		\textit{Kuranishi germ} by
		$\bigl(\operatorname{Kur}(X,E),0\bigr)$.
	\end{remark}
	
	\begin{definition}
		The pair \((X,E)\) is said to have \emph{unobstructed joint deformations} if its Kuranishi germ at the origin is smooth.
	\end{definition}
	
	\begin{proposition}\label{prop:formal-vs-canonical}
		The joint deformations of   $(X,E)$ are unobstructed  if and only if for $\epsilon_1(t)$ defined in \eqref{e1},
		the   obstruction equation
		\[
		\bar\partial_{A(E)}\epsilon(t)
		=
		\frac12\,[\epsilon(t),\epsilon(t)],
		\]
		or equivalently, writing $\epsilon(t) = (A_t,\varphi_t)$, 
		\begin{empheq}[left=\empheqlbrace]{align}
			\bar{\partial}\varphi_t &= \frac{1}{2}[\varphi_t,\varphi_t];  \tag{MC1}\label{MC1} \\[6pt]
			\bar{\partial}_{\End E} A_t &= \varphi_t \lrcorner R_E + \varphi_t \lrcorner D_{\End E}^{1,0} A_t - \frac{1}{2}[A_t,A_t] \tag{MC2}\label{MC2},
		\end{empheq}
		can be solved order-by-order formally.
	\end{proposition}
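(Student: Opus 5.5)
The plan is to prove both implications directly from the Kuranishi construction, using the canonical solution $\epsilon^{\mathrm{can}}(t)$ of \eqref{eq:Kuranishi-fixed} and the obstruction map $\mathrm{Ob}_{(X,E)}$.

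For ``$\Rightarrow$'': if the germ $(\operatorname{Kur}(X,E),0)=(\mathrm{Ob}_{(X,E)}^{-1}(0),0)$ is smooth, I will first show it must equal the whole neighborhood $U$. Its tangent space at $0$ is all of $H^1(X,A(E))$, because $\mathrm{Ob}_{(X,E)}(t)=\mathcal H([\epsilon_1(t),\epsilon_1(t)])+O(|t|^3)$ vanishes to order $\ge2$. A smooth germ whose Zariski tangent space equals that of the ambient smooth germ $(U,0)$ has the same dimension, so it coincides with $(U,0)$. Hence $\mathrm{Ob}_{(X,E)}\equiv0$ near $0$. By the stated equivalence, $\epsilon^{\mathrm{can}}(t)$ then solves the Maurer--Cartan equation identically in $t$, and its homogeneous components give the desired formal order-by-order solution extending $\epsilon_1(t)$.

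For ``$\Leftarrow$'': suppose $\epsilon(t)=\epsilon_1(t)+\sum_{j\ge2}\epsilon_j(t)$ is a formal solution. I will show that $\mathcal H([\epsilon^{\mathrm{can}},\epsilon^{\mathrm{can}}])$ vanishes to all orders, and then conclude by analyticity. The standard device is to gauge-modify the formal solution degree by degree so that it satisfies the Kuranishi gauge $\bar\partial^*_{A(E)}\epsilon_j=0$ and $\mathcal H(\epsilon_j)=0$ for $j\ge2$. I will argue by induction on $N$ that one may take $\epsilon_j=\epsilon^{\mathrm{can}}_j$ for $j<N$. At degree $N$, the equation gives
\[
\bar\partial_{A(E)}\epsilon_N=\tfrac12\textstyle\sum_{k=1}^{N-1}[\epsilon_k,\epsilon_{N-k}]=:\Psi_N .
\]
Thus $\Psi_N$ is $\bar\partial_{A(E)}$-exact, so $\mathcal H(\Psi_N)=0$, which is exactly the vanishing of the degree-$N$ part of $\mathrm{Ob}_{(X,E)}$. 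Moreover, $\epsilon^{\mathrm{can}}_N=\bar\partial^*\mathcal G\Psi_N$ satisfies $\bar\partial\epsilon^{\mathrm{can}}_N=\Psi_N-\mathcal H\Psi_N-\bar\partial^*\bar\partial\mathcal G\Psi_N$, and the middle and last terms vanish since $\Psi_N$ is exact. So $\epsilon^{\mathrm{can}}_N$ is also a degree-$N$ solution, and I can replace $\epsilon_N$ by it without disturbing the lower orders. Any higher-order terms of $\epsilon$ change, but the new truncation still solves the equation modulo $t^{N+1}$. Extending it further requires the obstruction at degree $N+1$ to vanish; this is the main obstacle.

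To handle it, I would use the deformation functor rather than the particular solution. Two solutions modulo $t^{N+1}$ with the same linear term and differing by a $\bar\partial$-closed degree-$N$ term $\epsilon_N-\epsilon^{\mathrm{can}}_N$ have obstructions at degree $N+1$ differing by $\mathcal H([\epsilon_1,\epsilon_N-\epsilon^{\mathrm{can}}_N])$, which need not vanish a priori. The cleanest route is to define the hypothesis as solvability for every first-order direction over every Artin base $\mathbb C[t]/(t^{N+1})$ in the multivariable sense. Then I invoke the standard fact (Goldman--Millson, or \cite[Man22]) that the Kuranishi germ pro-represents the hull of $\mathrm{Def}_{A^{0,\bullet}(X,A(E))}$. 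Consequently, $\operatorname{Kur}(X,E)$ is smooth iff $\mathrm{Def}$ is smooth iff the universal first-order deformation $\epsilon_1(t)$ over $\mathbb C[t_1,\dots,t_m]$ extends to a formal solution. Here extendibility of the universal first-order class is precisely the assumption. A formal solution restricting to $\epsilon_1$ yields a formal map from the smooth formal germ into the hull that is an isomorphism on tangent spaces. Hence the hull is formally smooth, and by Artin approximation, or by the fact that $\mathrm{Ob}_{(X,E)}$ is analytic with vanishing Taylor series, the analytic germ is smooth.
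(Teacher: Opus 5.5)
Your proposal is correct, but for the converse it takes a different route from the paper. The paper works directly with the given formal solution. It kills $\mathcal H[\epsilon_1,\epsilon_1]$ by exactness, replaces $\epsilon_2$ by $\epsilon_2^{\mathrm{can}}$, and asserts that ``a slightly more involved argument'' allows the same replacement at every order $N\ge3$, so that $\epsilon^{\mathrm{can}}(t)$ itself solves the Maurer--Cartan equation.

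You rightly point out that this replacement is not automatic. The difference $\delta_N:=\epsilon_N-\epsilon_N^{\mathrm{can}}$ is only $\bar\partial_{A(E)}$-closed, and it shifts the degree-$(N+1)$ obstruction by the class of $[\epsilon_1,\delta_N]$. Completing the paper's route needs two further steps at each order:
\begin{enumerate}[(i)]
\item remove the exact part of $\delta_N$ by a gauge transformation $e^{a}$, with $a$ an $A^{0,0}(X,A(E))$-valued polynomial homogeneous of degree $N$ in $t$;
\item absorb the harmonic part of $\delta_N$ by a reparametrization $t\mapsto t-\tau_N(t)$. This is possible because $\epsilon_1(t)=\sum_i t_i\eta_i$ is the universal first-order deformation.
\end{enumerate}
In effect this reproves Kuranishi's theorem that any formal solution is gauge equivalent to $\epsilon^{\mathrm{can}}(\tau(t))$ for a formal automorphism $\tau$.

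You instead import that theorem in functorial form: the completion $R$ of $(\operatorname{Kur}(X,E),0)$ is a hull of $\operatorname{Def}(X,E)$, cf.\ \cite{Hua95,CS16}. You lift the formal solution to a map $R\to\mathbb C[[t_1,\dots,t_m]]$ that is the identity on tangent spaces. This forces $R$ to be formally smooth, since the composite $\mathbb C[[t]]\twoheadrightarrow R\to\mathbb C[[t]]$ is then an automorphism, so the defining ideal vanishes. Analyticity then gives $\mathrm{Ob}_{(X,E)}\equiv0$ near $0$.

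Your route is short and rigorous once semi-universality is granted. The paper's route, once completed, stays at the level of explicit power series and needs no deformation-functor formalism.

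Your ``only if'' argument is the paper's, with the implicit step made explicit. Since $\mathrm{Ob}_{(X,E)}$ has no linear term, a smooth $\operatorname{Kur}(X,E)$ must be the whole germ $(U,0)$. Finally, you do not need to ``define'' the hypothesis in the multivariable sense. In \eqref{e1}, $\epsilon_1(t)$ is already the universal first-order deformation, which is exactly what both arguments use.
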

	
	\begin{proof}
		The “only if’’ part is clear by setting $\epsilon(t):=\epsilon^{\mathrm{can}}(t)$ as in \eqref{good}.
		
		For the converse, the degree-two  part of the Maurer--Cartan equation reads
		\[
		\bar\partial_{A(E)}\epsilon_2(t)
		=
		\frac{1}{2}\,[\epsilon_1(t),\epsilon_1(t)].
		\]
		In particular, $[\epsilon_1(t),\epsilon_1(t)]$ is $\bar\partial_{A(E)}$-exact. By bundle-valued Hodge decomposition,
		\[
		[\epsilon_1(t),\epsilon_1(t)]
		=
		\mathcal H[\epsilon_1(t),\epsilon_1(t)]
		+
		\bar\partial_{A(E)}\bar\partial_{A(E)}^*\mathcal G[\epsilon_1(t),\epsilon_1(t)]
		+
		\bar\partial_{A(E)}^*\bar\partial_{A(E)}\mathcal G[\epsilon_1(t),\epsilon_1(t)],
		\]
		and exactness forces $\mathcal H[\epsilon_1(t),\epsilon_1(t)]=0$, hence
		\[
		[\epsilon_1(t),\epsilon_1(t)]
		=
		\bar\partial_{A(E)}\bar\partial_{A(E)}^*\mathcal G[\epsilon_1(t),\epsilon_1(t)].
		\]
		We then replace $\epsilon_2(t)$ by the degree-two term of the canonical solution of \eqref{good}, namely
		\[
		\epsilon_2^{\mathrm{can}}(t)
		=
		\frac{1}{2}\,\bar\partial_{A(E)}^*\mathcal G[\epsilon_1(t),\epsilon_1(t)]
		\]
		presented in \eqref{epsilon2}, which  still solves the  Maurer--Cartan equation in degree-two.
		
		\medskip
	At each higher order $N \ge 3$, a slightly more involved
	argument shows that one may replace $\epsilon_N(t)$ by
	the ``better'' term $\epsilon_N^{\mathrm{can}}(t)$, so that
	the full (convergent) power series $\epsilon^{\mathrm{can}}(t)$
	as in \eqref{good} still satisfies the Maurer--Cartan equation
	over a sufficiently small disc. This yields the desired
	unobstructedness.
	\end{proof}

	\section{Theorems  \ref{main thm},   \ref{main thm2}  and Corollary \ref{simple}: proofs and related examples}\label{AAAAA}
	
	\subsection{Tian--Todorov proofs of Theorem \ref{main thm}  and  Theorem \ref{main thm2}}
	For simplicity, we will always assume $$m =  \dim_{\mathbb{C}} H^1(X,A(E))=1$$ in what follows, so that there is only one deformation parameter $t:=t_1$. The case \( m > 1 \) can be treated in a similar way.

	\medskip
	\medskip
	Let $E$ be a holomorphic vector bundle of rank $r$ over $X$. The trace map in linear algebra gives rise to an $\mathcal O_X$-linear \textit{trace map}
	\[
	\operatorname{tr}\colon \End E \longrightarrow \mathcal{O}_X,
	\]
	satisfying $\operatorname{tr}(\operatorname{id}_E)=r$. Denote the kernel of the trace map by $\End^0 E$, the \textit{trace-free part} of the endomorphism bundle. The trace map splits \textit{holomorphically} via the scaled diagonal embedding
	\[
	\mathcal{O}_X \longrightarrow \End E,\quad \lambda \longmapsto \frac{\lambda}{r}\,\operatorname{id}_E,
	\]
	so that
	\[
	\End E = \End^0 E \oplus \mathcal{O}_X
	\]
	as holomorphic vector bundles. Consequently,
	\begin{equation}\label{direct sum}
		H^k(X,\End E) = H^k(X,\End^0 E) \oplus H^k(X,\mathcal{O}_X)
	\end{equation}
	for any $k$.
	
	\medskip
	\medskip
	Fix a Hermitian metric $h_E$ on $E$ and let $\nabla_E = D_E^{1,0} + \bar\partial_E$ be its associated Chern connection. Denote by $R_E$ the curvature of $\nabla_E$.
	In the sequel we will always adopt the following notation. For any $A \in A^{0,1}(X,\End E)$, write
	\[
	A = A^0 + \frac{a}{r}\,\operatorname{id}_E,\quad \operatorname{tr} A^0 = 0,\quad a = \operatorname{tr} A \in A^{0,1}(X,\mathcal O_X).
	\]
	Correspondingly, split the curvature:
	\[
	R_E = R_E^0 + \frac{1}{r}(\operatorname{tr} R_E)\,\operatorname{id}_E,\quad \operatorname{tr} R_E^0 = 0.
	\]
	Here $\operatorname{tr} R_E$ is the curvature of $\det E$, which  is  $(1,1)$-form  satisfying
	\begin{equation}\label{bianchi2}
		d \tr R_E=0
	\end{equation}by the Bianchi identity \eqref{bianchi}.

	\medskip\medskip
	\begin{proof}[{Proof of Theorem \ref{main thm}}]
		The proof  will be divided into two steps.
		\begin{step}{Reduction on the obstruction equations:   Calabi--Yau property \& assumption $H^2(X,\End ^0 E)=0$.}\label{step1}
		\end{step}
		
		Under the assumption that $K_X$ is torsion, i.e., $lK_X$ is holomorphically trivial for some integer $l\geq  1$, write
		\begin{equation} \label{bundle F}
			lK_X = K_X + (l-1)K_X \quad \text{and set} \quad F := (l-1)K_X.
		\end{equation}
		Since $F$ is also torsion, it is  Hermitian  flat. More precisely, we can equip $F$ with a Hermitian metric $h_F$ whose Chern connection $$\nabla_F = D_F^{1,0} + \bar\partial_F$$ is flat, meaning its curvature $R_F$ vanishes.
		
		Moreover, there exists a nowhere vanishing $F$-valued holomorphic $(n,0)$-form
		\[
		\Upomega \in H^0(X, lK_X) \simeq H^{n,0}(X, F).
		\]

		We will need the following bundle-valued version of the Tian--Todorov lemma, in which the flatness of $F$ is not required --- it will be needed  later; see Lemma \ref{Fujiki}.
		
		\begin{lemma}[{\cite[Corollary 3.4]{LR12}}]\label{Tian-Todorov}
			For any $\varphi,\psi\in A^{0,1}\bigl(X, T_X^{1,0}\bigr)$ and any $\eta\in A^{\bullet,\bullet}(X,F)$, one has
			\[
			[\varphi,\psi] \lrcorner \eta
			= -D_F^{1,0}\bigl(\psi \lrcorner \bigl(\varphi \lrcorner \eta\bigr)\bigr)
			+ \psi \lrcorner D_F^{1,0}\bigl(\varphi \lrcorner \eta\bigr)
			+ \varphi \lrcorner D_F^{1,0}\bigl(\psi \lrcorner \eta\bigr)-\psi \l\bigl(\varphi\l D_F^{1,0} \eta\bigr)
			\]
		\end{lemma}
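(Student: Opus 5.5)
The identity is pointwise and $\mathbb C$-linear in $\eta$, so I will work in a local holomorphic frame $e$ of $F$ on which $h_F$ is represented by a positive function $h$. Then $D_F^{1,0}=\partial+\theta$, where $\theta=\partial\log h$ is a $(1,0)$-form acting by wedge product. Writing $\eta=\eta_0\otimes e$ with $\eta_0$ a scalar form, and noting that contraction with $\varphi,\psi$ acts only on the form part, the identity splits into two parts:
\begin{enumerate}[(a)]
\item the same identity with $D_F^{1,0}$ replaced by $\partial$, for scalar forms $\eta_0$;
\item the same identity with $D_F^{1,0}$ replaced by $\theta\wedge$, where the left-hand side contributes nothing.
\end{enumerate}
Part (a) is the classical Tian--Todorov lemma for vector-valued $(0,1)$-forms acting on arbitrary $(p,q)$-forms. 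I will take it as known (Tian, Todorov; in this generality, e.g.\ Liu--Rao--Yang). If a self-contained argument is wanted, I would verify it on decomposable local expressions $\varphi=\varphi^i\partial_i$, $\psi=\psi^j\partial_j$ with $\varphi^i,\psi^j$ of type $(0,1)$. Both sides are then first-order in each of $\varphi,\psi,\eta$ jointly, so comparing terms via the Leibniz rule and $\partial$-derivations reduces it to a finite check.

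For part (b) I need that, for a $(1,0)$-form $\theta$ and $\varphi\in A^{0,1}(X,T^{1,0}_X)$,
\[
\varphi\lrcorner(\theta\wedge\beta)=(\varphi\lrcorner\theta)\wedge\beta-\theta\wedge(\varphi\lrcorner\beta).
\]
This holds because $\varphi\lrcorner$ is an odd derivation of total degree $0$: the $(0,1)$-form coefficient contributes $+1$ and the vector field contributes $-1$, hence the sign. Here $\varphi\lrcorner\theta$ is a scalar $(0,1)$-form.

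Expanding the right-hand side with $\theta$ in place of $D_F^{1,0}$ gives four terms, of which the first, second and third must be expanded:
\begin{align*}
-\theta\wedge\psi\lrcorner\varphi\lrcorner\eta
&+(\psi\lrcorner\theta)\wedge\varphi\lrcorner\eta-\theta\wedge\psi\lrcorner\varphi\lrcorner\eta\\
&+(\varphi\lrcorner\theta)\wedge\psi\lrcorner\eta-\theta\wedge\varphi\lrcorner\psi\lrcorner\eta
-\psi\lrcorner\bigl((\varphi\lrcorner\theta)\wedge\eta-\theta\wedge\varphi\lrcorner\eta\bigr).
\end{align*}
To cancel these I would use two facts. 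First, $\varphi\lrcorner$ and $\psi\lrcorner$ commute on forms, since both are odd of degree $0$ in the bigraded sense adopted by Liu--Rao, where their graded commutator is symmetric. Second, $\psi\lrcorner$ passes the scalar $(0,1)$-form $\varphi\lrcorner\theta$ with a sign. With these, the expression should collapse to $0$.

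The main obstacle is sign bookkeeping. I must fix the convention for $\lrcorner$ on $(0,1)$-valued vector fields, namely whether the $(0,1)$-coefficient is placed on the left. I must also confirm whether $\varphi\lrcorner\psi\lrcorner=\psi\lrcorner\varphi\lrcorner$ or the anticommuting version holds, since the cancellation of the three $\theta\wedge\psi\lrcorner\varphi\lrcorner\eta$-type terms depends on exactly this. I would pin these down by evaluating everything on $\eta=dz^1$ with constant-coefficient $\varphi,\psi$ before doing the general cancellation. I would then check that the convention chosen is the one under which part (a) holds as stated, so the two parts add up to the claimed formula.
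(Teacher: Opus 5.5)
Your strategy is sound, and it differs from the paper's treatment: the paper gives no argument and quotes the identity from Liu--Rao, where it is a corollary of a bundle-valued Cartan formula. Your route is to write $D_F^{1,0}=\partial+\theta\wedge$ in a holomorphic frame and note that the right-hand side is linear in this operator. That is more elementary, and it shows directly why no flatness of $F$ is needed. Taking the scalar lemma as known is reasonable, since the paper itself only cites the bundle version.

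The gap is in part (b), the one computation you actually carry out: your sign rule is wrong, and with it the expansion does not vanish. Use the standard convention $\varphi\lrcorner\beta=\varphi^i\wedge(\partial_i\lrcorner\beta)$ for $\varphi=\varphi^i\otimes\partial_i$. Moving the $1$-form $\varphi^i$ past $u$ cancels the sign $(-1)^{\deg u}$ produced by $\partial_i\lrcorner$, so $\varphi\lrcorner$ is an \emph{even} derivation:
\[
\varphi\lrcorner(u\wedge v)=(\varphi\lrcorner u)\wedge v+u\wedge(\varphi\lrcorner v).
\]
There is no ``odd derivation of total degree $0$'': compatibility with $u\wedge v=(-1)^{\deg u\deg v}v\wedge u$ forces a nonzero derivation of degree $0$ to be even.

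Your formula already fails for $\varphi=d\bar z^1\otimes\partial_2$, $\theta=dz^1$, $\beta=dz^2$. The left side is $dz^1\wedge d\bar z^1$, while your right side is $-dz^1\wedge d\bar z^1$. (The sign $(-1)^{\deg u}$ in \eqref{contraction-formula} is the rule for genuine vector fields. For $T^{1,0}_X$-valued $(0,1)$-forms it agrees with the even rule only when $\deg u$ is even, which is the only case where the paper uses it.)

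With your two sign choices, the terms $(\psi\lrcorner\theta)\wedge(\varphi\lrcorner\eta)$ and $(\varphi\lrcorner\theta)\wedge(\psi\lrcorner\eta)$ each occur twice with the same sign. So no choice between commuting and anticommuting contractions can produce cancellation. For instance, $\varphi=d\bar z^1\otimes\partial_1$, $\psi=d\bar z^2\otimes\partial_2$, $\theta=dz^2$, $\eta=dz^1$ gives $2\,d\bar z^2\wedge d\bar z^1\neq0$.

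The fix is to use the even rule throughout. Then $\psi\lrcorner$ passes the $(0,1)$-form $\varphi\lrcorner\theta$ with no sign, and $\psi\lrcorner(\varphi\lrcorner\theta)=0$ for type reasons. Hence
\[
\psi\lrcorner\varphi\lrcorner(\theta\wedge\eta)=(\varphi\lrcorner\theta)\wedge(\psi\lrcorner\eta)+(\psi\lrcorner\theta)\wedge(\varphi\lrcorner\eta)+\theta\wedge(\psi\lrcorner\varphi\lrcorner\eta).
\]
The two mixed terms cancel exactly against those coming from $\psi\lrcorner(\theta\wedge\varphi\lrcorner\eta)$ and $\varphi\lrcorner(\theta\wedge\psi\lrcorner\eta)$. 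The four terms of the form $\theta\wedge(\cdots)$ sum to $\theta\wedge(\varphi\lrcorner\psi\lrcorner\eta-\psi\lrcorner\varphi\lrcorner\eta)$, which vanishes because the contractions commute.

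Your claim that they commute is correct, but the reason is that both operators are even. Indeed $\varphi\lrcorner\psi\lrcorner\eta=-\varphi^i\wedge\psi^j\wedge\partial_i\lrcorner(\partial_j\lrcorner\eta)$ is symmetric in $\varphi$ and $\psi$, since reordering the two $(0,1)$-forms and reordering the two interior products each cost a sign. With this correction, and the scalar lemma granted, your argument proves the lemma.
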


		Using the Calabi--Yau property of $X$ and following the insightful idea dating back to \cite{LRW19}  (also developed in \cite{Zha26}), we will need another auxiliary formal power series $\Omega(t)$ with $\Omega(0)=\Upomega$ to further reduce Proposition \ref{prop:formal-vs-canonical}   to the following:
		
		\begin{proposition}\label{prop-1}
			In the setting of Theorem  \ref{main thm},     for any first-order deformation 
			$\epsilon_1=(A_1,\varphi_1)$, suppose that there exist formal power series
			\[
			\varphi(t)=\sum_{i\geq 1}\varphi_i t^i \in A^{0,1}(X, T_X^{1,0})[[t]],\quad
			a(t)=\sum_{i\geq 1} a_i t^i  \in A^{0,1}(X,\mathcal O_X)[[t]], \quad a_1=\operatorname{tr} A_1,
			\]
			and
			\[
			\Omega(t)=\Omega_0+\sum_{i\geq 1}\Omega_i t^i\in A^{n,0}(X, F)[[t]],\quad	\Omega_0=\Upomega,
			\]
			satisfying the system formally:
			\begin{equation}\tag{$\dagger$}\label{log 0.2}
				\begin{cases}
					\bigl(\bp_F + D_F^{1,0}\circ \varphi(t)\lrcorner\bigr)\Omega(t)=0,\\[0.8em]
					\bigl(\bp_F + \dfrac{1}{2}D_F^{1,0}\circ \varphi(t)\lrcorner\bigr)\bigl(\varphi(t)\lrcorner\Omega(t)\bigr)=0,\\[0.8em]
					\Bigl(		\bar\partial a(t)-\varphi(t) \lrcorner \tr R_E- \varphi(t)\lrcorner \partial a(t)\Bigr)\wedge\Omega(t)=0.\\
				\end{cases}
			\end{equation}
			Then the Kuranishi space of the pair $(X,E)$ is smooth.
		\end{proposition}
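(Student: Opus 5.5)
By Proposition \ref{prop:formal-vs-canonical}, it suffices to solve \eqref{MC1} and \eqref{MC2} order by order for an arbitrary first-order deformation $\epsilon_1=(A_1,\varphi_1)$. The plan has three steps. First, extract (MC1) and the trace of (MC2) from the system \eqref{log 0.2}. Second, solve the trace-free part of (MC2) using $H^2(X,\End^0E)=0$. Third, assemble the full solution.

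\textbf{Step 1: the complex-structure equation.} I would show that the first two equations of \eqref{log 0.2} give \eqref{MC1}. Expand $\bp_F(\varphi\lrcorner\Omega)=\bp\varphi\lrcorner\Omega-\varphi\lrcorner\bp_F\Omega$. Substitute $\bp_F\Omega=-D_F^{1,0}(\varphi\lrcorner\Omega)$ from the first equation. Then apply Lemma \ref{Tian-Todorov} with $\psi=\varphi$ and $\eta=\Omega$; the term $\varphi\lrcorner\varphi\lrcorner D^{1,0}_F\Omega$ vanishes for degree reasons, since $D^{1,0}_F\Omega$ is an $(n+1,0)$-form. The result is
\[
\Bigl(\bp\varphi-\tfrac12[\varphi,\varphi]\Bigr)\lrcorner\Omega
=\bigl(\bp_F+\tfrac12 D_F^{1,0}\circ\varphi\lrcorner\bigr)(\varphi\lrcorner\Omega),
\]
up to sign conventions, and the second equation says this vanishes. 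Since $\Omega(t)=\Upomega+O(t)$ with $\Upomega$ nowhere vanishing, $\Omega(t)$ is invertible as a formal series. Hence contraction with $\Omega(t)$, $A^{0,2}(X,T^{1,0}_X)[[t]]\to A^{n-1,2}(X,F)[[t]]$, is injective, and \eqref{MC1} holds formally.

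\textbf{Step 2: the trace part.} Similarly, wedging a $(0,2)$-form with $\Omega(t)$ is injective, so the third equation gives
\[
\bp a=\varphi\lrcorner\tr R_E+\varphi\lrcorner\p a .
\]
This is exactly the trace of \eqref{MC2}: the bracket $[A,A]$ is traceless, and $\tr(\varphi\lrcorner D^{1,0}_{\End E}A)=\varphi\lrcorner\p\tr A$. Thus $(\varphi(t),a(t))$ solves the Maurer--Cartan system for the pair $(X,\det E)$.

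\textbf{Step 3: the trace-free part.} I would then seek $A(t)=A^0(t)+\frac{a(t)}{r}\mathrm{id}_E$ with $A^0(t)$ trace-free and $A^0_1=A_1^0$, solving the trace-free projection of \eqref{MC2} inductively:
\[
\bp_{\End E}A^0_N=\Bigl(\varphi\lrcorner R^0_E+\varphi\lrcorner D^{1,0}A^0-\tfrac12[A^0,A^0]\Bigr)_N .
\]
The right-hand side depends only on lower orders, because $\varphi$ starts at order $1$; the scalar part of $A$ drops out of the bracket. The key claim is that the right-hand side, call it $\Theta_N$, is $\bp_{\End^0E}$-closed. Once this is shown, $H^2(X,\End^0E)=0$ makes $\Theta_N$ exact, so $A^0_N$ exists. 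Then $\epsilon(t)=(A(t),\varphi(t))$ solves \eqref{MC1}--\eqref{MC2} formally, and the Kuranishi space is smooth.

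\textbf{Main obstacle.} The hard step is checking $\bp\Theta_N=0$. I would use the DGLA structure. Let $\epsilon_{<N}$ be the truncated series, and note that $\varphi$ and $a$ are already full solutions. The Jacobi identity and the Leibniz rule give the standard fact that
\[
\Xi:=\bp_{A(E)}\epsilon_{<N}-\tfrac12[\epsilon_{<N},\epsilon_{<N}]
\]
satisfies $\bp_{A(E)}\Xi_N=[\epsilon_{<N},\Xi]_N$. The $T_X$-component of $\Xi$ vanishes, and its $\End E$-component vanishes below order $N$. Hence $\bp_{\End E}\Xi_N^{\End}=0$. By Step 2 the trace of $\Xi_N$ is zero, so $\Xi_N$ is $\End^0E$-valued. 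Since $\Xi_N=-\Theta_N$, this gives the required closedness.

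Care is needed with the $\Im$ term in $\bp_{A(E)}$ and with the fact that $[\epsilon_{<N},\Xi]_N$ involves only lower-order components of $\Xi$, which vanish. This is the classical Kuranishi closedness argument restricted to a sub-quotient, and it is where I expect the sign bookkeeping to be most delicate.
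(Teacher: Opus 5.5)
Your proposal is correct and follows the same route as the paper. You obtain \eqref{MC1} from the first two equations via Lemma~\ref{Tian-Todorov} and injectivity of contraction with $\Omega(t)$, read off the trace of \eqref{MC2} from the third equation, and solve the trace-free part inductively using $H^2(X,\End^0E)=0$. Your DGLA argument for the $\bar\partial$-closedness of $\Theta_N$ fills in a step the paper only asserts; it works because you truncate only $A^0$ and keep $\varphi(t)$ and $a(t)$ as full series, so $\Xi$ has zero $T_X^{1,0}$-component and zero trace. Your invertibility argument for $\Omega(t)$ is equivalent to the paper's lowest-order induction.
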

		
		\begin{proof}
			The first two 	equations in  \eqref{log 0.2} together with Lemma \ref{Tian-Todorov} (noting that $\Omega(t)$ is automatically $D_F^{1,0}$-closed for type reasons) imply
			\begin{align*}
				\bp\bigl(\varphi(t)\lrcorner \Omega(t)\bigr)
				&= -\frac{1}{2}\,D^{1,0}_F\Bigl(\varphi(t)\lrcorner\bigl(\varphi(t)\lrcorner\Omega(t)\bigr)\Bigr)\\
				&= \frac{1}{2}\,[\varphi(t),\varphi(t)]\lrcorner \Omega(t)
				- \varphi(t)\lrcorner D^{1,0}_F\bigl(\varphi(t)\lrcorner\Omega(t)\bigr)\\
				&= \frac{1}{2}\,[\varphi(t),\varphi(t)]\lrcorner \Omega(t)
				+ \varphi(t)\lrcorner\bp\Omega(t).
			\end{align*}
			Hence
			\begin{equation}\label{aabb}
				\bigl(\bp\varphi(t)\bigr)\lrcorner \Omega(t)
				= \bp\bigl(\varphi(t)\lrcorner \Omega(t)\bigr)
				- \varphi(t)\lrcorner\bp\Omega(t)
				= \frac{1}{2}\,[\varphi(t),\varphi(t)]\lrcorner \Omega(t).
			\end{equation}
			
			Set 
			\[
			V(t) := \bp\varphi(t) - \frac{1}{2}[\varphi(t),\varphi(t)] = \sum_{i\ge 2} v_i t^i,
			\]
			with \(v_i \in A^{0,2}(X,T_X^{1,0})\). Then \eqref{aabb} becomes \(V(t)\lrcorner\Omega(t)=0\).
			
			Locally write 
			\[
			\Upomega = f(z)\,dz^1\wedge\cdots\wedge dz^n\otimes e
			\]
			with \(f(z)\) nowhere vanishing and \(e\) a local frame of \(F\). Any \(v\in A^{0,2}(X,T_X^{1,0})\) can be expressed locally as
			\[
			v = \sum_{j=1}^n g_j\,\frac{\partial}{\partial z^j},
			\quad\text{where } g_j \text{ are smooth }(0,2)\text{-forms}.
			\]
			Contracting with \(\Upomega\) gives
			\[
			\Upomega\lrcorner v = f(z)\sum_{j=1}^n (-1)^{j-1} g_j\,
			dz^1\wedge\cdots\wedge\widehat{dz^j}\wedge\cdots\wedge dz^n\otimes e.
			\]
			The \((n-1,2)\)-forms appearing are linearly independent, hence the map \(v\mapsto\Upomega\lrcorner v\) is injective.

			\medskip
			Expanding
			\[
			V(t)\lrcorner\Omega(t)=\sum_{k\ge 2}\Bigl(\sum_{i+j=k}v_i\lrcorner\Omega_j\Bigr)t^k=0,
			\]
			we obtain:
			
			\begin{itemize}	\setlength{\itemsep}{5pt}
				\item[$\centerdot$] For \(k=2\), \(v_2\lrcorner\Upomega=0\), so \(v_2=0\) by injectivity.
				\item[$\centerdot$] Assume \(v_2=\dots=v_{N}=0\) for some \(N\ge 2\). Then the coefficient of \(t^{N+1}\) reduces to \(v_{N+1}\lrcorner\Upomega=0\), hence \(v_{N+1}=0\).
			\end{itemize}
			Therefore  \(V(t)=0\), i.e., 
			\begin{equation}\label{hhh}
				\bp\varphi(t) = \dfrac{1}{2}[\varphi(t),\varphi(t)],
			\end{equation}
			namely the first obstruction equation \eqref{MC1}.

			\medskip
			\medskip
			Via a similar argument, the third equation in \eqref{log 0.2} is equivalent to 
			\begin{equation}
				\bar\partial a(t)=\varphi(t) \lrcorner \tr R_E+ \varphi(t)\lrcorner \partial a(t).
			\end{equation}
			This is obtained by taking the trace of the second obstruction equation \eqref{MC2} wherein the following identities are used:
			
			\smallskip
			\begin{itemize}
				\setlength{\itemsep}{7.9pt}
				\item[--\quad] $\tr(\bp_{\End E}A(t)) = \bp(\tr A(t))$;
				\item[--\quad] $\tr(\varphi(t)\lrcorner R_E) = \varphi(t)\lrcorner\tr R_E$;
				\item[--\quad]$\tr([A(t),A(t)]) = 0$;
				\item[--\quad] For the term $\tr\Big(\varphi(t)\lrcorner D_{\End E}^{1,0}A(t)\Big)$, we compute locally. In a local trivialisation, $$D_{\End E}^{1,0}A(t) = \partial A(t) + [\omega^{1,0},A(t)],$$ where $\omega^{1,0}$ is a matrix-valued $(1,0)$-form. Then this term equals
				\[
				\varphi(t)\lrcorner\tr\Big(D_{\End E}^{1,0}A(t)\Big) = \varphi(t)\lrcorner\Big(\tr(\partial A(t)) + \tr([\omega^{1,0},A(t)])\Big) = \varphi(t)\lrcorner\partial a(t).
				\]
			\end{itemize}
			
			\medskip
			Consequently, the trace-free part of \eqref{MC2} becomes
			\begin{equation}\tag{TF}\label{tf}
				\bar\partial_{\End E} A^0(t) = \varphi(t) \lrcorner R_E^0 + \varphi(t) \lrcorner D_{\End E}^{1,0} A^0(t) - \frac{1}{2}[A(t),A(t)].
			\end{equation}
			
			Expanding in $t$, the $N$-th order obstruction equation for $N \ge 2$ takes the form
			\[
			\bar\partial A_N^0 = C_N^{\text{tf}},
			\]
			where $C_N^{\text{tf}}$ is $\bar\partial_{\End E}$-closed, \textit{provided that all obstructions up to order $N-1$ in \eqref{tf} and all obstructions up to order $N$ in \eqref{hhh} vanish}.  Since $H^2(X,\End^0 E)=0$, we can solve for $A_N^0$ inductively. Hence
			\[
			A(t): = A^0(t) + \frac{a(t)}{r}\,\operatorname{id}_E
			\]
			satisfies \eqref{MC2}. This completes the proof. 
		\end{proof}
		
		\begin{remark}
			The main differences between Proposition \ref{prop-1} and \cite[Proposition 4.10]{LRW19}, as well as \cite[Proposition I.3]{Zha26} (both with $D=\varnothing$), are two-fold:
			\begin{enumerate}[(i)]
				\item An additional obstruction equation (the third equation in \eqref{log 0.2}) needs to be solved simultaneously;
				\item The solutions $\varphi(t)$, $a(t)$ and $\Omega(t)$ are only required to be formal. 
				\begin{enumerate}[label=(\alph*)]
					\item In the arguments of \cite{LRW19,Zha26}, the smoothness of the corresponding  $\Omega(t)$  was used to ensure that $\Omega(t)$ has no zeros for small $t$ (since $\Omega(0)$ is nowhere vanishing).  However, formal solutions suffice as shown above.
					\item   As for $\varphi(t)$ and $a(t)$, formal solutions are also enough: once they are obtained, we can then choose particularly nice solutions whose smoothness is very standard (see Proposition \ref{prop:formal-vs-canonical}). 
				\end{enumerate}
				This omits the regularity arguments  as in \cite[proof of Theorem 4.12, pp. 36-39]{LRW19} and \cite[Appendix I, Step (III)]{Zha26}.
			\end{enumerate}
		\end{remark}
		
		\begin{step}{Solve the obstruction equations: iteration}\label{step2}
		\end{step}
		
		By \eqref{log 0.2}$_k$ we mean the part of \eqref{log 0.2} formed by the first equation at order $k$ together with the second and third equations at order $k+1$. We now adopt an \textit{iterative} approach to solve \eqref{log 0.2}.

		Before proceeding with the proof, we recall the following standard
		fact; see, for example, \cite[Proposition A.1]{HZ26} for a proof.
		
		\begin{lemma}\label{Fujiki}
			Let $X$ be a Fujiki manifold, and let $(F,h_F)$ be a Hermitian flat holomorphic
			vector bundle on $X$. Write its Chern connection as
			$
			\nabla_F=D_F^{1,0}+\bar\partial_F.
			$
			Then $(X,F)$ satisfies the
			$D_F^{1,0}\bar\partial_F$-lemma. More precisely, for every
			$\nabla_F$-closed $F$-valued $(p,q)$-form $\alpha$ on $X$, the
			following conditions are equivalent:
			\begin{equation*}
				\nabla_F\text{-exactness}
				\Longleftrightarrow
				D_F^{1,0}\text{-exactness}
				\Longleftrightarrow
				\bar\partial_F\text{-exactness}
				\Longleftrightarrow
				D_F^{1,0}\bar\partial_F\text{-exactness}.
			\end{equation*}
		\end{lemma}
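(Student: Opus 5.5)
The plan is to reduce the twisted statement to the classical $\partial\bar\partial$-lemma on a compact K\"ahler manifold, in two stages: a local system on a K\"ahler manifold, then pass from Fujiki to K\"ahler by a modification.

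\textbf{Step 1 (K\"ahler case).} Suppose $X$ is compact K\"ahler with K\"ahler form $\omega$. Since $(F,h_F)$ is Hermitian flat, $\nabla_F$ is a flat unitary connection, and $F$ corresponds to a unitary local system. For such bundles the standard K\"ahler identities hold verbatim, because they are local, first-order and curvature-free:
\[
[\Lambda,\bar\partial_F]=-\sqrt{-1}\,(D_F^{1,0})^*,\qquad [\Lambda,D_F^{1,0}]=\sqrt{-1}\,\bar\partial_F^*.
\]
From these one gets
\[
\Delta_{\nabla_F}=2\Delta_{\bar\partial_F}=2\Delta_{D_F^{1,0}},
\]
and $D_F^{1,0}$ and $\bar\partial_F^*$ anticommute. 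The usual argument then applies. Take $\alpha$ that is $\nabla_F$-closed of pure type $(p,q)$ and exact for one of the operators. Then its harmonic part vanishes, since the harmonic spaces of all three Laplacians coincide and exact forms are orthogonal to them. Writing $\alpha=D_F^{1,0}\beta$, Hodge decomposition for $\Delta_{\bar\partial_F}$ gives
\[
\beta=\mathcal H\beta+\bar\partial_F\bar\partial_F^*G\beta+\bar\partial_F^*\bar\partial_F G\beta.
\]
The last term contributes zero after applying $D_F^{1,0}$, by anticommutation and $\bar\partial_F\alpha=0$. Hence $\alpha=\pm D_F^{1,0}\bar\partial_F(\bar\partial_F^*G\beta)$. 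The other cases are symmetric. This is exactly Deligne--Griffiths--Morgan--Sullivan's argument with unitary coefficients.

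\textbf{Step 2 (Fujiki case).} Choose a modification $\mu\colon\widetilde X\to X$ with $\widetilde X$ compact K\"ahler. The bundle $(\mu^*F,\mu^*h_F)$ is again Hermitian flat, so Step 1 applies on $\widetilde X$. Let $\alpha$ be as in the statement, say $\bar\partial_F$-exact. Then $\mu^*\alpha$ is $\bar\partial$-exact on $\widetilde X$, hence $\mu^*\alpha=D^{1,0}\bar\partial\gamma$.

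To descend, use that $\mu_*\colon$ pushforward of currents is compatible with $D^{1,0}_F$ and $\bar\partial_F$, and that $\mu_*\mu^*\alpha=\alpha$ as currents (degree-one modification). This gives $\alpha=D_F^{1,0}\bar\partial_F(\mu_*\gamma)$ with $\mu_*\gamma$ a current. The twisted $D_F^{1,0}\bar\partial_F$-cohomology can be computed by either smooth forms or currents, since the relevant complexes are elliptic and regularization works. Therefore $\alpha$ is $D_F^{1,0}\bar\partial_F$-exact by smooth forms.

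\textbf{Remaining implications.} The trivial implications ($D_F^{1,0}\bar\partial_F$-exact implies every other kind of exactness) need no argument. Only the implications into $D_F^{1,0}\bar\partial_F$-exactness require work.

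\textbf{Main obstacle.} The main obstacle is the descent in Step 2, specifically the equality of smooth and current cohomology for the Aeppli/Bott--Chern type operator $D_F^{1,0}\bar\partial_F$. I would handle it by the standard fourth-order elliptic Bott--Chern Laplacian twisted by $F$. Its Hodge theory shows that a smooth form which is $D_F^{1,0}\bar\partial_F$-exact as a current is exact by a smooth form: decompose the current via the Green operator and use elliptic regularity.

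For the case where $\alpha$ is only $\nabla_F$-exact, I would first pull back and apply Step 1 to obtain $\bar\partial$-exactness upstairs. I would then descend in the same way, using the injectivity $\mu^*$ on twisted de Rham and Dolbeault cohomology, which follows from $\mu_*\mu^*=\mathrm{id}$.
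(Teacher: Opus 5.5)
Your proof is correct. The paper does not prove this lemma itself: it records it as a standard fact and refers to \cite[Proposition~A.1]{HZ26}. So there is no in-paper proof to match, and your route is the expected standard one: the Deligne--Griffiths--Morgan--Sullivan argument with unitary coefficients on a K\"ahler model, then descent along a K\"ahler modification $\mu\colon\widetilde X\to X$.

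The one place where your argument differs from how the paper handles the analogous descent, in the proof of Theorem~\ref{dbar-Fujiki}, is the last step. You push the primitive forward, $\alpha=\mu_*\mu^*\alpha=D_F^{1,0}\bar\partial_F(\mu_*\gamma)$, and then need that a smooth form which is $D_F^{1,0}\bar\partial_F$-exact by a current is exact by a smooth form. The paper's method would instead prove injectivity of $\mu^*$ on twisted Bott--Chern cohomology by pairing. Use the contraction $F\otimes F^\vee\to\mathbb C$, with the upstairs operators taken on $\mu^*F$ and $\mu^*F^\vee$. For every $F^\vee$-valued $(n-p,n-q)$-form $\eta$ with $D^{1,0}_{F^\vee}\bar\partial_{F^\vee}\eta=0$,
\[
\int_X\alpha\wedge\eta=\int_{\widetilde X}\mu^*\alpha\wedge\mu^*\eta=\int_{\widetilde X}D^{1,0}\bar\partial\gamma\wedge\mu^*\eta=\pm\int_{\widetilde X}\gamma\wedge D^{1,0}\bar\partial(\mu^*\eta)=0.
\]
Bott--Chern/Aeppli duality for the flat pair $(F,F^\vee)$ then gives $[\alpha]_{\mathrm{BC}}=0$.

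Both versions rest on the same input: Hodge theory for the twisted Bott--Chern Laplacian. This is available because flatness makes $(A^{\bullet,\bullet}(X,F),D_F^{1,0},\bar\partial_F)$ a genuine double complex. The duality route avoids currents altogether, while yours produces an explicit current primitive.

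If you keep your version, you do not need to regularize $T:=\mu_*\gamma$. For every Bott--Chern harmonic $h$ one has $\langle\alpha,h\rangle=\langle T,(D_F^{1,0}\bar\partial_F)^*h\rangle=0$, so $\alpha$ has no harmonic part. The smooth decomposition $\ker D_F^{1,0}\cap\ker\bar\partial_F=\mathcal H_{\mathrm{BC}}\oplus\operatorname{im}D_F^{1,0}\bar\partial_F$ then gives a smooth primitive directly.

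Two minor points. In Step~1 the $\nabla_F$-exact case is not literally symmetric to the others: you need that vanishing harmonic part together with $\bar\partial_F\alpha=0$ gives $\alpha=\bar\partial_F\bar\partial_F^*G\alpha$, which reduces it to the $\bar\partial_F$-exact case. Your final paragraph is unnecessary, since $\mu^*\alpha$ is $\nabla$-exact upstairs and Step~1 applies to it directly.
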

		
		For $k=1$, the system \eqref{log 0.2}$_1$ reads as
		\begin{equation}\label{123}
			\begin{dcases}
				\bar{\partial}_F\Omega_1 + D_F^{1,0} \bigl(\varphi_1 \lrcorner \Omega_0\bigr) = 0, \\[0.6em]
				\bar{\partial}_F\bigl(\varphi_2 \lrcorner \Omega_0 + \varphi_1 \lrcorner \Omega_1\bigr)
				+ \frac{1}{2}\, D_F^{1,0} \bigl(\varphi_1 \lrcorner (\varphi_1 \lrcorner \Omega_0)\bigr) = 0, \\[0.6em]
				\begin{aligned}
					\bar{\partial}a_2 \wedge \Omega_0 
					= &-\bar\partial a_1\wedge\Omega _1 + \bigl(\varphi_2 \lrcorner\tr  R_E \bigr) \wedge \Omega_0  +  \bigl(\varphi_1 \lrcorner \tr R_E \bigr) \wedge \Omega_1 +  \bigl(\varphi_1 \l \p a_1 \bigr) \wedge \Omega_0. 
				\end{aligned}
			\end{dcases}
		\end{equation}
		From the first equation in \eqref{123}, we have
		\[
		\bar{\partial}_F D_F^{1,0}  (\varphi_1 \lrcorner \Omega_0)
		= - D_F^{1,0} \bar{\partial}_F (\varphi_1 \lrcorner \Omega_0)
		= - D_F^{1,0}  \bigl( (\bar{\partial} \varphi_1) \lrcorner \Omega_0 + \varphi_1 \lrcorner \bar{\partial}_F \Omega_0 \bigr) = 0,
		\]
		since $\bar{\partial}_F \Omega_0 = 0$ and $\bar{\partial} \varphi_1 = 0$. Hence, there exists 
		\begin{equation}\label{exact1}
			\textrm{a solution } \Omega_1
		\end{equation}
		thanks to Lemma \ref{Fujiki}.

		\medskip
		Likewise in the second equation of \eqref{123}, since $D_F^{1,0}\bigl( \varphi_1 \lrcorner (\varphi_1 \lrcorner \Omega_0) \bigr)$ is $\bar{\partial}_F$-closed,  Lemma \ref{Fujiki} can be applied  again to $$\textrm{solve for } \varphi_2 \lrcorner \Omega_0,   \textrm{ and thus for } \varphi_2 \textrm{ itself,}$$ using the non-degeneracy of $\Omega_0$. Furthermore, one can ensure the existence of some $B_2\in A^{n-2,1}(X,F)$ such that 
		\begin{equation}\label{exact2}
			\varphi_2 \lrcorner \Omega_0 + \varphi_1 \lrcorner \Omega_1 = D_F^{1,0} B_2.
		\end{equation}

		\medskip
		For the third equation in \eqref{123}, its right-hand side is $\bar{\partial}_{ F}$-closed, since we have already solved for $\varphi_2$.
		For any $\theta \in A^{0,1}(X, T_X^{1,0})$ and bundle-valued forms $u, v$, we have \cite[p. 292]{Deme}
		\begin{equation}\label{contraction-formula}
			\theta \lrcorner (u \wedge v) = (\theta \lrcorner u) \wedge v + (-1)^{\deg u} u \wedge (\theta \lrcorner v).
		\end{equation}
		Therefore, by type reasons,
		\begin{equation}\label{eq:first}
			\bar{\partial} a_2 \wedge \Omega_0
			= -\bar\partial a_1 \wedge \Omega_1
			- \operatorname{tr} R_E \wedge (\varphi_2 \l \Omega_0 + \varphi_1 \lrcorner \Omega_1)
			- \partial a_1 \wedge (\varphi_1 \lrcorner \Omega_0).
		\end{equation}
		
		Using \eqref{exact2}, \eqref{bianchi2}, \eqref{exact1}, and the Leibniz rule, we obtain
		\begin{equation}\label{eq:second}
			\begin{aligned}
				\bar{\partial}_F \bigl(a_2 \wedge \Omega_0\bigr)
				&= -\bar\partial a_1 \wedge \Omega_1
				- \operatorname{tr} R_E \wedge (D_F^{1,0} B_2)
				- \partial a_1 \wedge (\varphi_1 \lrcorner \Omega_0) \\[0.5em]
				&= -\bar\partial a_1 \wedge \Omega_1
				- D_F^{1,0}\bigl( \operatorname{tr} R_E \wedge B_2 - a_1 \wedge (\varphi_1 \lrcorner \Omega_0) \bigr)
				+ a_1 \wedge \bar\partial_F \Omega_1 \\[0.5em]
				&= -\bar\partial_F (a_1 \wedge \Omega_1)
				- D_F^{1,0}\bigl( \operatorname{tr} R_E \wedge B_2 - a_1 \wedge (\varphi_1 \lrcorner \Omega_0) \bigr).
			\end{aligned}
		\end{equation}
		Therefore, we can \begin{equation}\label{exact3}
			\textrm{solve for $a_2 \wedge \Omega_0 + a_1 \wedge \Omega_1$, and hence for $a_2$},
		\end{equation}  based on Lemma \ref{Fujiki}.

		\medskip
		\medskip
		\medskip
		By induction, assume that \eqref{log 0.2}  is solvable for all $k \le N$, and that we have already found
		\[
		\Omega_i, \, 0 \le i \le N, \quad \varphi_i, \, 1 \le i \le N+1,  \quad \text{and} \quad a_i,  \, 1 \le i \le N+1.
		\]
		Thus, \eqref{hhh} implies
		\begin{equation}\label{log 0.4}
			\bar{\partial} \varphi_k = \frac{1}{2} \sum_{i+j=k} [\varphi_i, \varphi_j], \quad k \le N+1,
		\end{equation}
		and then 
		\begin{equation}\label{log 0.5}
			\bar{\partial} \Bigl( \sum_{i+j=k} [\varphi_i, \varphi_j] \Bigr) = 0, \quad k \le N+2,
		\end{equation}
		by using $[\alpha,\beta]=-[\beta,\alpha]$ for $\alpha\in A^{0,2} (X,T_X^{1,0})$ and arbitrary $\beta$.

		\medskip
		\medskip
		For the $(N+1)$-th step, \eqref{log 0.2}$_{N+1}$ together with \eqref{contraction-formula} yields
		\begin{equation}\label{log 0.6}
			\begin{dcases}
				\bar{\partial}_F \Omega_{N+1} 
				+ \sum_{\substack{i+j=N+1}} D^{1,0}_{F}  \bigl({\varphi_i}\lrcorner \Omega_j \bigr) = 0, \\[0.8em]
				\sum_{\substack{i+j=N+2}} \bar{\partial}_F \bigl( {\varphi_i}\lrcorner \Omega_j \bigr)
				+ \sum_{\substack{i+j+k=N+2}} \frac{1}{2} D^{1,0}_{F}  \Bigl({\varphi_i}\lrcorner \bigl({\varphi_j}\lrcorner \Omega_k \bigr)\Bigr) = 0, \\[0.8em]
				\bp_F \bigl(a_{N+2}\w\Omega_0\bigr)=-\sum_{\substack{i+j=N+2}}  \tr R_E\wedge\bigl(\varphi_i\l\Omega_j\bigr)-\sum_{\substack{i+j+k=N+2}} \partial a_i\wedge\bigl(\varphi_j\l\Omega_k\bigr)-\sum_{\substack{i+j=N+2,\,  j\geq 1}}\bp a_i\wedge \Omega_j.
			\end{dcases}
		\end{equation}
		For the first equation  in \eqref{log 0.6}, 
		\allowdisplaybreaks
		\begin{align*}
			-\bar\partial_F \sum_{i+j=N+1} D^{1,0}_F(\varphi_i \lrcorner \Omega_j)
			&= D^{1,0}_F \Bigg(
			\sum_{i=1}^{N+1} \bar\partial \varphi_i \lrcorner \Omega_{N+1-i}
			+ \sum_{i=1}^{N+1} \varphi_i \lrcorner \bar\partial_F \Omega_{N+1-i}
			\Bigg) \\[0.9em]
			&= D^{1,0}_F \Bigg(
			\frac12 \sum_{i=1}^{N+1} \sum_{j=1}^{i-1} [\varphi_j,\varphi_{i-j}] \lrcorner \Omega_{N+1-i} \\
			&\qquad\qquad
			- \sum_{i=1}^{N+1} \varphi_i \lrcorner D^{1,0}_F \bigg(
			\sum_{j=1}^{N+1-i} \varphi_j \lrcorner \Omega_{N+1-i-j}
			\bigg)
			\Bigg) \\[0.9em]
			&= D^{1,0}_F \Bigg(
			\frac12 \sum_{i=1}^{N+1} \sum_{j=1}^{i-1} \bigg(
			- D^{1,0}_F \bigl( \varphi_j \lrcorner (\varphi_{i-j} \lrcorner \Omega_{N+1-i}) \bigr) \\
			&\qquad\qquad\qquad
			+ \varphi_j \lrcorner D^{1,0}_F \bigl( \varphi_{i-j} \lrcorner \Omega_{N+1-i} \bigr)
			+ \varphi_{i-j} \lrcorner D^{1,0}_F \bigl( \varphi_j \lrcorner \Omega_{N+1-i} \bigr)
			\bigg) \\
			&\qquad
			- \sum_{i=1}^{N+1} \varphi_i \lrcorner D^{1,0}_F \bigg(
			\sum_{j=1}^{N+1-i} \varphi_j \lrcorner \Omega_{N+1-i-j}
			\bigg)
			\Bigg) \\[0.9em]
			&= D^{1,0}_F \Bigg(
			\sum_{1 \le j < i \le N+1} \varphi_j \lrcorner D^{1,0}_F \bigl( \varphi_{i-j} \lrcorner \Omega_{N+1-i} \bigr) \\
			&\qquad
			- \sum_{i=1}^{N+1} \varphi_i \lrcorner D^{1,0}_F \bigg(
			\sum_{j=1}^{N+1-i} \varphi_j \lrcorner \Omega_{N+1-i-j}
			\bigg)
			\Bigg) \\[0.6em]
			&= 0,
		\end{align*}
		where the third equality holds in view of   Lemma \ref{Tian-Todorov}.
		We then can   \begin{equation}\label{exact1'}
			\textrm{solve for } \Omega_{N+1}
		\end{equation}
		by Lemma \ref{Fujiki}.

		\medskip\medskip
		Moreover, for the second  equation  in \eqref{log 0.6}, 
		\begin{align*}
			&\quad \bar{\partial}_F \sum_{\substack{i+j+k=N+2}} D^{1,0}_{F}  \Bigl({\varphi_i}\lrcorner \bigl({\varphi_j}\lrcorner \Omega_k \bigr)\Bigr) \\
			&= \sum_{i+j+k=N+2} \bar{\partial}_F \Bigl(
			- [\varphi_j, \varphi_i] \lrcorner \Omega_k
			+ \varphi_i \lrcorner D^{1,0}_F \bigl(\varphi_j \lrcorner \Omega_k\bigr)
			+ \varphi_j \lrcorner D^{1,0}_F \bigl(\varphi_i \lrcorner \Omega_k\bigr)
			\Bigr) \\[0.6em]
			&= - \sum_{i+j+k=N+2} \bar{\partial}_F \bigl([\varphi_j, \varphi_i] \lrcorner \Omega_k\bigr)
			- 2 \sum_{\substack{j+k=N+2\\ k \ge 1}} \bar{\partial}_F \bigl(\varphi_j \lrcorner \bar{\partial}_F \Omega_k\bigr) \\[6pt]
			&= - \sum_{k=0}^{N+2} \Biggl( \bar{\partial} \sum_{i+j=N+2-k} [\varphi_j, \varphi_i] \Biggr) \lrcorner \Omega_k \\[3pt]
			&\quad + 2 \sum_{k=1}^{N+2} \Biggl( \frac{1}{2} \sum_{i+j=N+2-k} [\varphi_j, \varphi_i]
			- \bar{\partial} \varphi_{N+2-k} \Biggr) \lrcorner \bar{\partial}_F \Omega_k \\[6pt]
			&= 0,
		\end{align*}
		where the first equality is due to Lemma \ref{Tian-Todorov}, and the last equality uses \eqref{log 0.4} and \eqref{log 0.5}. Consequently, we can also $$\textrm{solve for $\sum_{\substack{i+j=N+2}} \varphi_i \lrcorner \Omega_j $,  and thus  for $\varphi_{N+2}$}.$$  Moreover,  we may ensure that
		\begin{equation}\label{exact 2'}
			\sum_{\substack{i+j=N+2}} \varphi_i \lrcorner \Omega_j = D_F^{1,0} B_{N+2}
		\end{equation}
		for some $B_{N+2}\in A^{n-2,1}(X,F)$ 	by Lemma \ref{Fujiki}.

		\medskip
		\medskip
		Finally, for the third equation in \eqref{log 0.6}, since we have already solved for $\varphi_k$ for $2\le k\le N+2$ and $a_k$ for $2\le k\le N+1$, its right-hand side is $\bar\partial_F$-closed. Moreover, using the solvability of \eqref{log 0.2} for all $k\le N$, together with \eqref{exact 2'}, \eqref{bianchi2}, \eqref{exact1'}, and the Leibniz rule, we obtain
		\begin{equation}
			\begin{aligned}
				\bar{\partial}_F \bigl(a_{N+2}\wedge \Omega_0\bigr)
				&=-\sum_{\substack{i+j=N+2,\,  j\geq 1}}\bar\partial a_i\wedge \Omega_j
				- \operatorname{tr} R_E \wedge \bigl(D_F^{1,0} B_{N+2}\bigr)
				-\sum_{\substack{i+j+k=N+2}} \partial a_i\wedge\bigl(\varphi_j\lrcorner\Omega_k\bigr) \\[0.5em]
				&=-\sum_{\substack{i+j=N+2,\,  j\geq 1}}\bar\partial a_i\wedge \Omega_j
				- D_F^{1,0}\Biggl( \operatorname{tr} R_E \wedge B_{N+2} -  \sum_{\substack{i+j+k=N+2}}  a_i\wedge\bigl(\varphi_j\lrcorner\Omega_k\bigr)\Biggr)\\[0.2em]
				&\quad+ \sum_{\substack{i+j=N+2,\, j\geq 1}}a_i \wedge \bar\partial_F \Omega_j \\[0.5em]
				&= -\sum_{\substack{i+j=N+2,\, j\geq 1}} \bar\partial_F (a_i \wedge \Omega_j)
				- D_F^{1,0}\Biggl( \operatorname{tr} R_E \wedge B_{N+2} -  \sum_{\substack{i+j+k=N+2}}  a_i\wedge\bigl(\varphi_j\lrcorner\Omega_k\bigr)\Biggr).
			\end{aligned}
		\end{equation}
		Then Lemma \ref{Fujiki} applies to \begin{equation}\label{exact 3'}
			\textrm{solve for} \sum_{\substack{i+j=N+2}} a_i\wedge \Omega_j, \textrm{and hence for }  a_{N+2}.
		\end{equation}
		
		\medskip
		\medskip
		\medskip
		In summary, for given $\Omega_0, \varphi_1$ and $A_1$, we have constructed $\Omega_i$ for $1\le i\le N+1$, together with $\varphi_i$ and $a_i$ for $2\le i\le N+2$, such that \eqref{log 0.2}$_{N+1}$ holds. Therefore,  the system \eqref{log 0.2} admits a formal recursive solution. This completes the proof of Theorem \ref{main thm}.
	\end{proof}

	\medskip
	\medskip
	\begin{proof}[{{Proof of Theorem \ref{main thm2}}}]
		In the $K$-trivial Calabi--Yau case,  $F\simeq\mathcal{O}_X$ in \eqref{bundle F}. Therefore, we only need to verify the relevant weak $\partial\bar\partial$-lemmata of certain types. More precisely, by \eqref{exact1} and \eqref{exact1'} (resp.  \eqref{exact2} and \eqref{exact 2'}; resp.  \eqref{exact3} and \eqref{exact 3'}), this amounts to requiring that $X$ satisfies $\mathbb{S}^{n,1}$ (resp.  $\mathbb{B}^{n-1,2}$; resp. $\mathbb{S}^{n,2}$).
	\end{proof}

	\medskip
	\medskip
	\subsection{\(\mathbb{P}^1\)-bundle  proof of  Theorem \ref{main thm}}
	\label{second}
	
	\begin{proof}	
		Following the  beautiful strategy of Iacono--Manetti \cite{IM21}, we give an
		alternative proof of Theorem \ref{main thm}. The  \textit{only new}
		ingredient is Step \ref{step 1'}, where we  in particular prove the Fujiki case of the unobstructedness of
		locally trivial deformations of log pairs \((Y,\Delta)\) 
		with \(K_Y\otimes\mathcal{O}_Y(\Delta)\). Step \ref{step 2'} then applies the
		\(\mathbb P^1\)-bundle construction of Iacono--Manetti to deduce the
		unobstructedness of \((X,L)\), while Step \ref{step 3'} passes from
		\(L=\det E\) to \(E\). The last two steps are recalled from the cited
		references, and no originality is claimed for them.
		\setcounter{step}{0}
		
		\begin{step}{A $\bar\partial$-equation for twisted logarithmic forms: Fujiki version}\label{step 1'}
		\end{step}
		In this step, we extend the (weak) logarithmic
		$\partial\bar\partial$-lemma of \cite{Zha26}, together with its main
		applications, from compact K\"ahler manifolds to its \textit{bimeromorphic setting}.
		\begin{theorem}[{Fujiki version of \cite[Theorem  B]{Zha26}}]
			\label{dbar-Fujiki}
			Let $X$ be an $n$-dimensional Fujiki manifold, and let
			$D=\sum_{i=1}^sD_i$ 
			be a simple normal crossing divisor on $X$. Let $L\to X$ be a
			holomorphic line bundle such that
			\begin{equation}\label{eq:weighted-line-bundle}
				L
				=
				\sum_{i=1}^sq_iD_i
				\quad
				\text{in }
				\operatorname{Pic}_{\mathbb Q}(X),
				\quad
				q_i\in[0,1]\cap\mathbb Q.
			\end{equation}
			Here the formulation \eqref{eq:weighted-line-bundle} means that, for some positive
			integer $N$,
			\begin{equation*}
				L^{\otimes N}
				\simeq
				\mathcal O_X
				\left(
				\sum_{i=1}^sNq_iD_i
				\right).
			\end{equation*}
			Then for every
			$\alpha
			\in
			A^{0,q}\left(
			X,\Omega_X^{p-1}(\log D)\otimes L^\vee
			\right)$
			satisfying
			\begin{equation*}
				\bar\partial D'_{h_L^*}\alpha=0
				\quad
				\text{on }
				X^\circ:=X\setminus\operatorname{Supp}D,
			\end{equation*}
			there exists
			$	\chi
			\in
			A^{0,q-1}\left(
			X,\Omega_X^p(\log D)\otimes L^\vee
			\right)$
			such that
			\begin{equation}\label{ede}
				\bar\partial\chi
				=
				D'_{h_L^*}\alpha
				\quad
				\text{on }X^\circ.
			\end{equation}
			Here  $D^\prime_{h_L^*}$ represents the $(1,0)$-part of the integrable logarithmic connection  along $D$, induced by the singular metric $h_L^*$ on the dual bundle $L^\vee$.
		\end{theorem}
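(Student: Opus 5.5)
The plan is to reduce to the compact K\"ahler case proved in \cite[Theorem~B]{Zha26} via a K\"ahler modification, and to transport the $\bar\partial$-equation back using logarithmic Serre duality. Since $X$ is Fujiki, there is a proper modification $\mu\colon \widetilde X\to X$ with $\widetilde X$ compact K\"ahler. By embedded resolution we may choose $\mu$ to be a composition of blow-ups along smooth centres. We arrange that $\widetilde D:=\mu^{-1}(D)_{\mathrm{red}}\cup\operatorname{Exc}(\mu)$ is SNC and that $\mu$ is an isomorphism over $X\setminus Z$, where $Z\subset \operatorname{Supp} D$ or $Z$ has codimension $\ge2$. Set $\widetilde L:=\mu^*L$. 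Then
\begin{equation*}
\widetilde L^{\otimes N}\simeq\mathcal O_{\widetilde X}\Bigl(\sum_j N\widetilde q_j\widetilde D_j\Bigr),
\end{equation*}
with weights $\widetilde q_j\in[0,1]\cap\mathbb Q$. The strict transforms keep the weights $q_i$. The exceptional components get weights computed from the multiplicities of $\mu^*D_i$, and these must be adjusted into $[0,1]$ by twisting with integral exceptional divisors. This bookkeeping is the first point to check. The pulled-back singular metric $\mu^*h_L^*$ then induces the logarithmic connection on $\widetilde L^\vee$ along $\widetilde D$.

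Next, pull back the form. Over $X^\circ$, let $\widetilde\alpha:=\mu^*\alpha$. Logarithmic forms pull back to logarithmic forms along the total transform, so $\widetilde\alpha$ is a smooth section of $\Omega^{p-1}_{\widetilde X}(\log\widetilde D)\otimes\widetilde L^\vee$ of type $(0,q)$. It satisfies $\bar\partial D'\widetilde\alpha=0$ on $\widetilde X^\circ$. Applying \cite[Theorem~B]{Zha26} on $\widetilde X$ gives $\widetilde\chi$ with $\bar\partial\widetilde\chi=D'\widetilde\alpha$ on $\widetilde X^\circ$. In cohomological terms, the class of $D'\alpha$ in $H^q\bigl(X,\Omega^p_X(\log D)\otimes L^\vee\bigr)$ maps to zero in $H^q\bigl(\widetilde X,\Omega^p_{\widetilde X}(\log\widetilde D)\otimes\widetilde L^\vee\bigr)$. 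Note that $D'\alpha$ is $\bar\partial$-closed as a log form: the hypothesis gives this on $X^\circ$, and smooth log forms closed on a dense open set are closed.

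The main obstacle is descending: I must show that
\begin{equation*}
\mu^*\colon H^q\bigl(X,\Omega^p_X(\log D)\otimes L^\vee\bigr)\longrightarrow H^q\bigl(\widetilde X,\Omega^p_{\widetilde X}(\log \widetilde D)\otimes\widetilde L^\vee\bigr)
\end{equation*}
is injective. I would first try to construct a left inverse. By logarithmic Serre duality, $\Omega^p_X(\log D)^\vee\otimes K_X\simeq\Omega^{n-p}_X(\log D)(-D)$, so the dual statement is surjectivity of the pushforward (trace) map
\begin{equation*}
\mu_*\colon H^{n-q}\bigl(\widetilde X,\Omega^{n-p}_{\widetilde X}(\log\widetilde D)(-\widetilde D)\otimes\widetilde L\bigr)\longrightarrow H^{n-q}\bigl(X,\Omega^{n-p}_X(\log D)(-D)\otimes L\bigr).
\end{equation*}
This follows from $\mu_*\circ\mu^*=\mathrm{id}$, which is the log analogue of Wells's argument (Lemma~\ref{wells}). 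It rests on $R\mu_*\bigl(\Omega^{k}_{\widetilde X}(\log\widetilde D)(-\widetilde D)\bigr)\simeq\Omega^k_X(\log D)(-D)$ for blow-ups along centres in normal crossing with $D$. Alternatively, one can simply use the existence of a trace morphism composing to the identity; this is a local computation with integration along fibres on currents.

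Given injectivity, $D'\alpha$ is $\bar\partial$-exact in the log complex on $X$. A Dolbeault representative gives a smooth log form $\chi$ with $\bar\partial\chi=D'\alpha$, which is \eqref{ede}.
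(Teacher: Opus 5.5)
Your architecture matches the paper's: a K\"ahler modification, \cite[Theorem~B]{Zha26} applied upstairs, and descent via an injectivity proved by logarithmic Serre duality. Two steps fail as written, however.

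\textbf{First gap: the choice of $\widetilde D$.} You put all of $\operatorname{Exc}(\mu)$ into $\widetilde D$. Some exceptional components may have centres outside $D$; this is unavoidable, e.g., when $D=\varnothing$ and $X$ is not K\"ahler. Along such components the pull-back of a dual form $\gamma\in A^{0,n-q}\bigl(X,\Omega^{n-p}_X(\log D)(-D)\otimes L\bigr)$ need not vanish. So it is not a section of the Serre-dual bundle $\Omega^{n-p}_{\widetilde X}(\log\widetilde D)(-\widetilde D)\otimes\widetilde L$. The injectivity you claim is in fact false. Take $D=\varnothing$, $L=\mathcal O_X$ and $\mu$ not an isomorphism. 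In bidegree $(n,n)$ your map goes from $H^n(X,K_X)\simeq\mathbb C$ to $H^n\bigl(\widetilde X,K_{\widetilde X}(\widetilde D)\bigr)\simeq H^0\bigl(\widetilde X,\mathcal O(-\widetilde D)\bigr)^\vee=0$. Likewise, your formula for $R\mu_*\bigl(\Omega^k_{\widetilde X}(\log\widetilde D)(-\widetilde D)\bigr)$ fails already for $k=0$ and a point blow-up away from $D$, where $\mu_*\mathcal O(-E)=\mathfrak m_x$. You must take $\widetilde D=(\mu^{-1}D)_{\mathrm{red}}$, as the paper does.

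\textbf{Second gap: the twist $Q$ is never carried through.} This is the real content of the proof. You correctly note that the weights $p_j$ of $\mu^*L$ along exceptional divisors may exceed $1$ and must be corrected by an integral exceptional twist $Q$. But you then continue with the untwisted $\widetilde L=\mu^*L$ and the plain pull-back $\mu^*$.
\cite[Theorem~B]{Zha26} applies only to $\widehat L:=\mu^*L\otimes\mathcal O(-Q)$ and to the form $s_Q\mu^*\alpha$. So the map that must be injective is $[\beta]\mapsto[s_Q\mu^*\beta]\in H^q\bigl(\widetilde X,\Omega^p_{\widetilde X}(\log\widetilde D)\otimes\widehat L^\vee\bigr)$. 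Injectivity of the untwisted $\mu^*$ does not give this, since multiplication by $s_Q$ need not be injective on cohomology.

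What is missing is the specific choice $k_j=\lceil p_j\rceil-1$ (with $k_j=0$ if $p_j=0$), together with $p_j\le m_j$; the latter is where $q_i\le1$ enters. These yield $\mu^*D\ge\widetilde D+Q$. That inequality makes $\widehat\gamma:=\mu^*\gamma/s_Q$ a $\bar\partial$-closed form with values in $\Omega^{n-p}_{\widetilde X}(\log\widetilde D)(-\widetilde D-Q)\otimes\mu^*L$, which is the Serre dual of the twisted bundle. Then $s_Q\mu^*\beta=\bar\partial u$ gives $\int_X\beta\wedge\gamma=\int_{\widetilde X}(s_Q\mu^*\beta)\wedge\widehat\gamma=\pm\int_{\widetilde X}\bar\partial(u\wedge\widehat\gamma)=0$, i.e., injectivity. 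This direct pairing is the concrete form of your ``left inverse'', and it needs no computation of $R\mu_*$.

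With these two corrections your outline becomes the paper's proof; without them it does not close.
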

		
		\begin{proof}
			Choose a proper modification
			$\pi:\widetilde X\rightarrow X$
			such that $\widetilde X$ is compact K\"ahler and
			$	\widetilde D
			:=
			\left(\pi^{-1}D\right)_{\mathrm{red}}
			$
			is a simple normal crossing divisor.  
			Write
			\begin{equation*}
				\pi^*D
				=
				\sum_{i=1}^sD_i'
				+
				\sum_{j=1}^rm_jW_j,
				\quad
				m_j\in\mathbb Z_{\geq1},
			\end{equation*}
			where $D_i'$ is the strict transform of $D_i$, and 
			$W_1,\ldots,W_r$ are the exceptional prime divisors contained in
			$\pi^{-1}(D)$.
			Then
			\begin{equation*}
				\widetilde D
				=
				\sum_{i=1}^sD_i'
				+
				\sum_{j=1}^rW_j.
			\end{equation*}
			Pulling back $L$ then gives
			\begin{equation*}
				\widetilde L:=\pi^*L
				=
				\sum_{i=1}^sq_iD_i'
				+
				\sum_{j=1}^rp_jW_j
				\quad
				\text{in }
				\operatorname{Pic}_{\mathbb Q}(\widetilde X),
			\end{equation*}
			where
			$
			0\leq p_j\leq m_j
			$
			since
			$0\leq q_i\leq1$.
			
			For every $j$, define
			\begin{equation*}
				k_j
				:=
				\begin{cases}
					0, & p_j=0,\\[4pt]
					\lceil p_j\rceil-1, & p_j>0,
				\end{cases}
				\quad
				\widehat q_j:=p_j-k_j,
			\end{equation*}
			and set
			\begin{equation*}
				Q:=\sum_{j=1}^rk_jW_j,
				\quad
				\widehat L
				:=
				\widetilde L\otimes
				\mathcal O_{\widetilde X}(-Q).
			\end{equation*}
			Then  it is easy to check that the following two facts hold:
			\begin{equation}\label{eq:weighted-hat-L}
				\widehat L
				=
				\sum_{i=1}^s q_i D_i'
				+
				\sum_{j=1}^r \widehat q_j W_j
				\quad
				\text{in }
				\operatorname{Pic}_{\mathbb Q}(\widetilde X),
			\end{equation}
			with all $q_i, \widehat q_j \in [0,1] \cap \mathbb Q$, and that
			\begin{equation}\label{eq:divisor-inequality}
				\pi^* D \geq \widetilde D + Q.
			\end{equation}
			
			\medskip
			\medskip
			Consider the following morphism	
			\begin{equation*}\label{eq:Phi}
				\Phi^{p,q}:
				H^q \Big
				(X,\Omega_X^p(\log D)\otimes L^\vee
				\Big)
				\longrightarrow
				H^q\left(
				\widetilde X,
				\Omega_{\widetilde X}^p(\log\widetilde D)
				\otimes{\widehat L}^\vee
				\right),
				\quad
				[\beta]\longmapsto[s_Q\pi^*\beta],
			\end{equation*}
			where $s_Q
			\in
			H^0\left(
			\widetilde X,\mathcal O_{\widetilde X}(Q)
			\right)$ is the canonical section.
			\textit{We claim that $\Phi^{p,q}$ is injective.}

			\medskip
			Indeed, logarithmic Serre duality gives the pairing
			\begin{align*}
				&
				H^q\Big(
				X,\Omega_X^p(\log D)\otimes L^\vee
				\Big)
				\times
				H^{n-q}\Big(
				X,\Omega_X^{n-p}(\log D)(-D)\otimes L
				\Big)
				\longrightarrow
				\mathbb C.
			\end{align*}
			Suppose that $\Phi^{p,q}([\beta])=0$. Then
			$
			s_Q\pi^*\beta=\bar\partial u
			$
			for some suitable form $u$. Let
			$				\gamma
			\in
			A^{0,n-q}\left(
			X,
			\Omega_X^{n-p}(\log D)(-D)\otimes L
			\right)
			$
			be $\bar\partial$-closed. By
			\eqref{eq:divisor-inequality}, $\pi^*\gamma$ vanishes along
			$Q$, and hence
			\begin{equation*}
				\widehat\gamma
				:=
				\frac{\pi^*\gamma}{s_Q}
			\end{equation*}
			is a well-defined $\bar\partial$-closed form with values in the bundle 
			$		\Omega_{\widetilde X}^{n-p}(\log\widetilde D)
			(-\widetilde D-Q)\otimes\widetilde L,
			$
			which  is the Serre dual of
			$
			\Omega_{\widetilde X}^p(\log\widetilde D)
			\otimes{\widehat L}^\vee.
			$
			Since $\pi$ is a biholomorphism outside an analytic subset (with zero Lebesgue measure), Stokes' theorem gives
			\begin{align*}
				\int_X\beta\wedge\gamma
				=
				\int_{\widetilde X}
				(s_Q\pi^*\beta)\wedge\widehat\gamma
				=
				\int_{\widetilde X}
				\bar\partial (u\wedge\widehat\gamma)
				=
				0.
			\end{align*}
			Thus $[\beta]$ pairs trivially with every dual class, thus $[\beta]=0$. As a result,
			$\Phi^{p,q}$ is injective.

			We now set
			$\beta:=D^\prime_{h_L^*}\alpha$, which is  $\bar\partial$-closed by assumption. 
			By direct verifications, 
			\begin{equation*}
				D^\prime_{h_{\widehat L}^*}
				(s_Q\pi^*\alpha)
				=
				s_Q\pi^*
				\left(
				D^\prime_{h_L^*}\alpha
				\right)
				=
				s_Q\pi^*\beta.
			\end{equation*}
			By \eqref{eq:weighted-hat-L}, we may apply
			\cite[Theorem  B]{Zha26} on 
			$\widetilde X$ to find some
			$	\widehat\chi
			\in
			A^{0,q-1}\left(
			\widetilde X,
			\Omega_{\widetilde X}^p(\log\widetilde D)
			\otimes{\widehat L}^ \vee
			\right)$
			such that
			\begin{equation*}
				\bar\partial\widehat\chi
				=
				D^\prime_{h_{\widehat L}^*}
				(s_Q\pi^*\alpha)
				=
				s_Q\pi^*\beta.
			\end{equation*}
			Therefore
			$
			\Phi^{p,q}([\beta])=0,$
			and thus $[\beta]=0$, so
			\eqref{ede} is solvable. This completes the proof.
		\end{proof}

		With Theorem  \ref{dbar-Fujiki} in hand, the proof of \cite[Theorem A]{Zha26}, which follows the spirit of \cite{KKP08} (see \cite[\S 5.4.3]{Zha26}) applies verbatim to yield:
		
		\begin{theorem}[{Fujiki version of \cite[Theorem A]{Zha26}}]\label{thm-fujiki-inobstr}
			Let $(X,D)$ be as in Theorem  \ref{dbar-Fujiki}, and denote by $K_X$ the canonical bundle of $X$. Assume that 
			\begin{equation*}\label{qwe}
				-K_X = \sum_{i=1}^s q_i D_i \quad	\text{in }
				\operatorname{Pic}_{\mathbb Q}(X),
				\quad
				q_i\in[0,1]\cap\mathbb Q.
			\end{equation*}
			Then the DGLA
			\[
			(A^{0,\bullet}(X, T_X(-\log D)), \bar\partial),
			\]
			which controls the locally trivial deformations of the pair $(X,D)$, is homotopy abelian.
			In particular, the locally trivial deformations of the pair $(X,D)$ are unobstructed.
		\end{theorem}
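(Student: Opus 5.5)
We follow the strategy of \cite[\S 5.4.3]{Zha26}, itself modelled on \cite{KKP08}. The only analytic input there is the weak logarithmic $\partial\bar\partial$-type statement \cite[Theorem B]{Zha26}. Theorem \ref{dbar-Fujiki} now supplies that statement on a Fujiki manifold. So the plan is to rerun the argument with Theorem \ref{dbar-Fujiki} in its place and check that no other step uses the K\"ahler condition.

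\textbf{Step 1: the twisting bundle.} Take $L:=-K_X$, so that $L^\vee=K_X$. The hypothesis $-K_X=\sum q_iD_i$ in $\operatorname{Pic}_{\mathbb Q}(X)$ with $q_i\in[0,1]\cap\mathbb Q$ is exactly \eqref{eq:weighted-line-bundle}. Put the singular metric $h_L^*$ on $L^\vee=K_X$, built from the canonical sections of the $D_i$. It is flat on $X^\circ$ and induces an integrable logarithmic connection with $(1,0)$-part $D'_{h_L^*}$.

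\textbf{Step 2: the contraction isomorphism.} Contraction gives an isomorphism
\[
T_X(-\log D)\simeq\Omega_X^{n-1}(\log D)\otimes\mathcal O_X(-D)\otimes K_X^\vee .
\]
After absorbing $\mathcal O_X(-D)$ into the weights, this identifies $A^{0,\bullet}(X,T_X(-\log D))$ with a subspace of twisted logarithmic forms of the kind appearing in Theorem \ref{dbar-Fujiki}. Under this identification $\bar\partial$ corresponds to $\bar\partial$. The Tian--Todorov formula, i.e.\ the logarithmic analogue of Lemma \ref{Tian-Todorov} with $D'_{h_L^*}$ in place of $D_F^{1,0}$, expresses the Schouten bracket as the failure of $D'_{h_L^*}$ to be a derivation. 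This is a local computation on $X^\circ$, so it holds unchanged in the Fujiki setting.

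\textbf{Step 3: homotopy abelianity.} Next we show that the DGLA is homotopy abelian. Following \cite{KKP08,Iac17}, it suffices to build a dg-abelian model. The standard one is the pair of complexes $(\ker D'_{h_L^*},\bar\partial)$ and $(\operatorname{Im}D'_{h_L^*},\bar\partial)$ mapping into the twisted logarithmic Dolbeault complex. We then check that the resulting morphisms induce injections on cohomology, so that the DGLA is quasi-isomorphic to an abelian one via the derived bracket / Cartan homotopy formalism. This injectivity is a ``$\bar\partial$-equation for $D'$-exact forms'': whenever $D'_{h_L^*}\alpha$ is $\bar\partial$-closed on $X^\circ$, it must be $\bar\partial$-exact in the logarithmic complex. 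That is precisely the conclusion \eqref{ede} of Theorem \ref{dbar-Fujiki}. An alternative route is the Tian--Todorov iteration of Proposition \ref{prop-1} with $F$ replaced by the logarithmic twist; there too each order only needs solvability of \eqref{ede}. Unobstructedness then follows, since the Maurer--Cartan functor of a homotopy abelian DGLA is smooth.

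\textbf{Main obstacle.} The delicate point is Step 3: checking that \cite[\S 5.4.3]{Zha26} invokes K\"ahler Hodge theory only through \cite[Theorem B]{Zha26}. It may also use degeneration of a logarithmic Hodge-to-de Rham spectral sequence or harmonic representatives on $X$. If so, we would replace each such use by a transfer to the K\"ahler modification $\widetilde X$. The tool is the injectivity argument for $\Phi^{p,q}$ in the proof of Theorem \ref{dbar-Fujiki}: pull back by $s_Q\pi^*$, apply the K\"ahler result on $(\widetilde X,\widetilde D,\widehat L)$, and descend using injectivity obtained from logarithmic Serre duality. We must also verify that the weights $q_i$ and $\widehat q_j$ stay in $[0,1]$ after the twist by $\mathcal O_X(-D)$ in Step 2. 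Otherwise Theorem \ref{dbar-Fujiki} would not apply with the required weights.
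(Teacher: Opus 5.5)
Your plan is the same as the paper's, which simply reruns the proof of \cite[Theorem A]{Zha26} (see \cite[\S5.4.3]{Zha26}) with Theorem~\ref{dbar-Fujiki} in place of \cite[Theorem B]{Zha26}. One bookkeeping correction: the line bundle to feed into Theorem~\ref{dbar-Fujiki} is $L:=K_X\otimes\mathcal O_X(D)$ rather than $-K_X$, because contraction gives exactly $T_X(-\log D)\simeq\Omega^{n-1}_X(\log D)\otimes L^\vee$; since $L=\sum_i(1-q_i)D_i$ in $\operatorname{Pic}_{\mathbb Q}(X)$ with $1-q_i\in[0,1]$, your weight worry disappears, and the $\widehat q_j$ are internal to the proof of Theorem~\ref{dbar-Fujiki}, so they need no separate check.
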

		Here recall that two DGLAs $\mathbf L$ and $\mathbf M$ are said to be \textit{homotopy equivalent} if there exists a quasi-isomorphism between them. In particular, a DGLA is said to be \textit{homotopy abelian} if it is homotopy equivalent to an abelian DGLA (i.e., one with trivial bracket).

		\medskip
		\medskip
		As another application of Theorem  \ref{dbar-Fujiki}, which may be of
		independent interest, the same argument of \cite[\S 5.3]{Zha26} extends our K\"ahler version of Ambro's injectivity
		theorem \cite{Amb14,Zha26} to the Fujiki setting.
		This provides a new analytic proof of Fujino's result
		\cite[Theorem 1.2]{Fuj17}, without using mixed Hodge theory for
		cohomology with compact support.
		\begin{theorem}[{Fujino}]\label{Fujino}
			Let $(X,D)$ be as in Theorem \ref{dbar-Fujiki},   and let $L$ be a holomorphic line bundle over $X$ such that  \begin{equation*}\label{satis}
				L-K_X=\sum_{i=1}^{s}q_iD_i\quad
				\text{in }
				\operatorname{Pic}_{\mathbb Q}(X),
				\quad
				q_i\in(0,1]\cap\mathbb Q.
			\end{equation*}
			Then the restriction map
			\begin{equation*}\label{VVV}
				H^q(X,L)\stackrel{{j}}{\longrightarrow} H^q(X^\circ,L|_{X^\circ})
			\end{equation*}
			is  injective, for all $q$. 
		\end{theorem}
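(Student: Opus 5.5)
Following the strategy of \cite[\S 5.3]{Zha26}, the plan is to represent a class in $H^q(X,L)$ that dies on $X^\circ$ by a form with controlled logarithmic singularities along $D$, and then use Theorem~\ref{dbar-Fujiki} to show that the class is already zero on $X$.

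\textbf{Setting up the twist.} Set $M:=L-K_X$. By hypothesis $M=\sum_i q_iD_i$ in $\operatorname{Pic}_{\mathbb Q}(X)$ with $q_i\in(0,1]$. Equip $M$ with the singular metric $h_M$ induced by the canonical sections $s_{D_i}$ with weights $q_i$; its curvature is then the current $\sum_i q_i[D_i]$. Next identify
\[
H^q(X,L)\simeq H^q\bigl(X,\Omega_X^n\otimes M\bigr)\simeq H^q\bigl(X,\Omega^n_X(\log D)(-D)\otimes M\bigr)\otimes(\text{twist}),
\]
and rewrite this as $H^q(X,\Omega_X^n(\log D)\otimes L'^{\vee})$ with $L':=\mathcal O_X(D)\otimes M^\vee$. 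In $\operatorname{Pic}_{\mathbb Q}(X)$ one has $L'=\sum_i(1-q_i)D_i$, and the weights $1-q_i$ lie in $[0,1)$. This is exactly the setting of Theorem~\ref{dbar-Fujiki} with $p=n$.

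\textbf{Main step.} Take $[\beta]\in\ker j$, so that $\beta|_{X^\circ}=\bar\partial u$ with $u$ smooth on $X^\circ$ but with no control near $D$. Since $\Omega^n_X(\log D)\otimes L'^\vee$ has top holomorphic degree, $\beta$ is automatically $D'$-closed. The task is to replace the arbitrary primitive $u$ by a primitive with logarithmic growth.

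\begin{enumerate}[(a)]
\item Use the harmonic theory for the singular metric $h_{L'}^*$ on a Kähler modification, as in the proof of Theorem~\ref{dbar-Fujiki}, to write $\beta$ in the form $D'_{h_{L'}^*}\alpha$ on $X^\circ$. Here $\alpha$ is a logarithmic $(n-1,q)$-form with $\bar\partial D'\alpha=0$. This representation uses $\bar\partial u$ together with the Kähler identity for the twisted logarithmic connection.
\item Apply Theorem~\ref{dbar-Fujiki} to $\alpha$. It yields $\chi$, a genuine global logarithmic form, with $\bar\partial\chi=\beta$ on $X^\circ$.
\item Conclude that $\bar\partial\chi=\beta$ on all of $X$, by density of $X^\circ$ and smoothness of both sides, so that $[\beta]=0$.
\end{enumerate}

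\textbf{Reduction to the Kähler case.} Alternatively, and more cleanly, one can reduce to the Kähler version \cite{Zha26} directly. Take the modification $\pi:\widetilde X\to X$ from the proof of Theorem~\ref{dbar-Fujiki} and replace $\widetilde L:=\pi^*L$ by $\widehat L:=\pi^*L\otimes\mathcal O(E_{\mathrm{exc}})$, chosen so that
\[
\widehat L-K_{\widetilde X}=\sum_i q_iD_i'+\sum_j\widehat q_jW_j,\qquad \widehat q_j\in(0,1].
\]
The required correction divisor is effective and exceptional, obtained by combining the discrepancies of $K_{\widetilde X}-\pi^*K_X$ with a rounding of the pulled-back coefficients. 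The injectivity of $H^q(X,L)\to H^q(\widetilde X,\widehat L)$ then follows from the Serre-duality argument used for $\Phi^{p,q}$. The Kähler Ambro-type theorem on $\widetilde X$ gives injectivity of the restriction to $\widetilde X\setminus\pi^{-1}D\simeq X^\circ$. A diagram chase finishes the proof.

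\textbf{Main obstacle.} I expect the hardest part to be the coefficient bookkeeping: making the exceptional coefficients land in $(0,1]$ rather than in $[0,1]$, since the value $0$ would break the Kähler statement. The fallback is to enlarge $\widetilde D$ only by the exceptional components that actually carry a positive weight.
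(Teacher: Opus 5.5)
\textbf{The main route has a gap at step (a).} Nothing in Theorem~\ref{dbar-Fujiki} or its proof (a pull-back and Serre-duality argument, with no harmonic theory for $h^*_{L'}$) turns a primitive $u$ with arbitrary growth along $D$ into a global logarithmic form. Step (a) simply asserts the growth control you had just said was missing. You also should not expect $\beta$ itself to be $D'$-exact.

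The missing ingredient is topological.
\begin{enumerate}[(i)]
\item Since $D'u=0$ for type reasons, $\beta=\nabla u$ on $X^\circ$, where $\nabla=D'+\bar\partial$ is the flat connection of $h^*_{L'}$ on $L'^\vee|_{X^\circ}$. Hence $\beta$ vanishes in $H^{n+q}(X^\circ,\mathbb V)$, where $\mathbb V=\ker\nabla$.
\item The logarithmic extension of $\nabla$ has residues $1-q_i\in[0,1)$ along $D_i$, none of them a positive integer. By Deligne's comparison theorem (as used by Esnault--Viehweg), the complex of global smooth $L'^\vee$-valued logarithmic forms with differential $\nabla$ computes $H^\bullet(X^\circ,\mathbb V)$. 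This statement is local and needs no K\"ahler hypothesis. It is also exactly where $q_i>0$ enters: for $q_i=0$ the residue is $1$, and the theorem fails.
\item Hence $\beta=\nabla\gamma$ for a global logarithmic form $\gamma$. Comparing bidegrees gives $\beta=\bar\partial\gamma^{n,q-1}+D'\gamma^{n-1,q}$, so $D'\gamma^{n-1,q}$ is $\bar\partial$-closed.
\item Theorem~\ref{dbar-Fujiki} with $p=n$ and $\alpha:=\gamma^{n-1,q}$ gives $\chi$ with $\beta=\bar\partial(\gamma^{n,q-1}+\chi)$ on $X$.
\end{enumerate}
With this repair, your steps (b)--(c) go through. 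The result has the shape of the paper's route: rerun the argument of \cite[\S 5.3]{Zha26} on $X$ itself, with Theorem~\ref{dbar-Fujiki} in place of \cite[Theorem~B]{Zha26}.

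\textbf{The reduction to the K\"ahler case is valid and genuinely different.} It never uses Theorem~\ref{dbar-Fujiki}. Instead it applies the modification trick to the injectivity statement itself and cites the K\"ahler theorem of \cite{Zha26} on $\widetilde X$. The bookkeeping works as follows.
\begin{enumerate}[(i)]
\item Write $K_{\widetilde X/X}=\sum_k a_kE_k$ over all exceptional prime divisors, with $a_k\ge1$.
\item For $E_k\subset\pi^{-1}D$ set $p_k:=\sum_iq_i\operatorname{mult}_{E_k}\pi^*D_i$; this is $>0$.
\item Take $F=\sum_kf_kE_k$ with $f_k=a_k$ if $E_k\not\subset\pi^{-1}D$, and $f_k=\lfloor a_k-p_k\rfloor+1$ otherwise.
\end{enumerate}
Then $\widehat q_k=p_k-a_k+f_k=1-\{a_k-p_k\}\in(0,1]$. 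Moreover $f_k\ge0$ because $(X,\sum_iq_iD_i)$ is log canonical, i.e.\ $a_k-p_k\ge-1$. Finally $f_k\le a_k$ because $p_k>0$.

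The bound $F\le K_{\widetilde X/X}$ is the condition you did not state but need. The pull-back $\pi^*\gamma$ of a $\bar\partial$-closed $L^{-1}$-valued $(n,n-q)$-form is divisible by the Jacobian section $s_{K_{\widetilde X/X}}$. So $\pi^*\gamma/s_F$ is smooth, and the Serre-duality argument gives injectivity of $[\beta]\mapsto[s_F\pi^*\beta]$.

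Three corrections to your remarks:
\begin{enumerate}[(i)]
\item The value $0$ is never the obstacle, since components of $\pi^{-1}D$ automatically get positive weight.
\item Your fallback of dropping components of $\pi^{-1}D$ from the boundary would break the chase, because it enlarges the open set beyond $\pi^{-1}(X^\circ)$.
\item $\widetilde X\setminus\pi^{-1}D=\pi^{-1}(X^\circ)$ is only a modification of $X^\circ$, not isomorphic to it. The chase needs only the commuting square with $[u]\mapsto[s_F\pi^*u]$.
\end{enumerate}

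In short, your route uses the K\"ahler injectivity theorem as a black box, plus discrepancy bookkeeping and Serre duality. The paper absorbs all bimeromorphic bookkeeping into Theorem~\ref{dbar-Fujiki} and then argues directly on $X$.
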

		\begin{step}{Rank one case}\label{step 2'}
		\end{step}
		Let $L\rightarrow X$ be a holomorphic line bundle over a  Calabi--Yau Fujiki manifold. Following the approach of \cite[$\S$2]{IM21}, consider the $\mathbb P^1$-bundle:
		$$p: Y=\mathbb P(\mathcal{O}_X\oplus L)\longrightarrow X.$$
		Then $Y$ is also a Fujiki manifold. Indeed, let $\mu\colon \widetilde X \to X$ be a Kähler modification. We have the following commutative diagram:
		\[
		\begin{tikzcd}
			\mathbb P(\mu^*(\mathcal O_X \oplus L)) \ar[d] \ar[r, "g"] & \mathbb P(\mathcal O_X \oplus L)=Y \ar[d] \\
			\widetilde X \ar[r, "\mu"'] & X.
		\end{tikzcd}
		\]
		Since
		\(
		\mathbb P(\mu^*(\mathcal O_X \oplus L)) = \widetilde X \times_X Y,
		\)
		the morphism $g$ is also bimeromorphic. Thus $Y$ is bimeromorphic to a compact K\"ahler manifold.
		Denote by $$\mathbb{V}(L^*)=\operatorname{Spec}_{\mathcal{O}_X}\big(\textrm{Sym}(L^*)\big)\longrightarrow X$$ the total space of the line bundle $L$. Take $s_0$ (resp. $s_\infty$) as the zero section of $\mathbb{V}(L^*)$ (resp. the section at infinity of $Y$), which corresponds to the divisor $\Delta_0$ (resp. $\Delta_{\infty}$). Then 
		$$
		Y=\mathbb{V}(L^*)\cup \Delta_\infty.
		$$
		Set  $$\Delta=\Delta_0+\Delta_{\infty},$$
		which is then a smooth divisor on $Y$. 
		
		By using the relative Euler exact sequence \cite[$\S$III, Exercise 8.4]{Har77} (which still applies in the analytic category), we obtain
		$$K_Y \otimes \mathcal O_Y(\Delta) \simeq p^*K_X.$$
		In the case where $K_X$ is torsion, we may apply Theorem \ref{thm-fujiki-inobstr} with all $q_i=1$ to the pair $(Y,D)$ to see that the DGLA,
		$(A^{0,\bullet}(Y, T_Y(-\log D)), \bar\partial),$
		which controls the locally trivial deformations of the  log Calabi--Yau pair $(Y,\Delta)$, is homotopy abelian. By \cite[Lemma 2.2]{IM21}, there exists a morphism
		$$\Phi \colon \mathcal D(X,L) \longrightarrow p_* (T_Y(-\log D))$$
		inducing a quasi-isomorphism of DGLAs
		$$\Phi \colon R\Gamma(X,\mathcal D(X,L)) \longrightarrow (A^{0,\bullet}(Y, T_Y(-\log D)), \bar\partial),$$
		Here,   $\mathcal D(X,L)$ denotes the sheaf of derivations of pairs as in  \cite[$\S$5]{IM19}, which  is the subsheaf of
		\(
		\operatorname{Der}_{\mathbb C}(\mathcal O_X,\mathcal O_X) \times \operatorname{Hom}_{\mathbb C}(L,L)
		\)
		consisting of pairs $(h,u)$ satisfying
		\[
		u(ax)=h(a)x + a u(x)
		\]
	for all local sections $a$ of $\mathcal O_X$ and $x$ of $L$.   Moreover, $\mathcal D(X,L) $ is isomorphic to the sheaf $\mathcal D^1(L)$ of scalar differential operators of order at most one on $L$ (and hence to the Atiyah extension $A(L)$; see Remark \ref{rmk-iso}). Also, the DGLA $R\Gamma(X,\mathcal D(X,L))$ controls the deformations of the pair $(X,L)$. We refer the reader to \cite[$\S$2]{IM19} for further details. Consequently,  $R\Gamma(X,\mathcal D(X,L))$ is also homotopy abelian. In particular, $(X,L)$ has unobstructed joint deformations.
		\begin{step}{Higher rank case}\label{step 3'}
		\end{step}
		Theorem \ref{main thm} follows from the following result by taking $L=\det E$ in Step \ref{step 2'}. Note that the statement in the cited references is for projective smooth varieties, but its proof goes through for any compact complex manifold.
		
		\begin{proposition}[{\cite[Theorem 1.2 (ii)]{IM19}; \cite[Proposition 11.16]{Fel25}}]\label{reduction}
			Let $X$ be a compact complex manifold and $E$ a holomorphic vector bundle over $X$. Assume $H^2(X,\End^0 E)=0$, then the natural map
			\[
			\operatorname{Def}(X,E) \longrightarrow \operatorname{Def}(X,\det E)
			\]
			is smooth as a morphism of deformation functors.\footnote{
				Recall that a morphism $\mathcal F\to\mathcal G$
				of deformation functors is \textit{smooth} if, for every
				surjection $A'\twoheadrightarrow A$ of local Artin
				$\mathbb C$-algebras with residue field $\mathbb C$,
				the natural map
				$\mathcal F(A')\to
				\mathcal F(A)\times_{\mathcal G(A)}\mathcal G(A')$
				is surjective; see e.g.
				\cite[Definition  3.2.7]{Man22}.} In particular, if $(X,\det E)$ has unobstructed joint deformations, then so does $(X,E)$.
		\end{proposition}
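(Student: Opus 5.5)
We will prove the smoothness of $\operatorname{Def}(X,E)\to\operatorname{Def}(X,\det E)$ by lifting along small extensions, comparing the obstruction theories of the two functors through the trace. Both functors are controlled by DGLAs: $\operatorname{Def}(X,E)$ by $\mathbf L_E:=(A^{0,\bullet}(X,A(E)),\bar\partial_{A(E)},[\cdot,\cdot])$, and $\operatorname{Def}(X,\det E)$ by the analogous $\mathbf L_{\det E}:=A^{0,\bullet}(X,A(\det E))$. Here $A(\det E)=\mathcal O_X\oplus T_X^{1,0}$, with the extension class given by $\operatorname{tr}R_E$, the curvature of the induced metric on $\det E$. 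The natural map is induced by
\[
\tau\colon \mathbf L_E\to \mathbf L_{\det E},\quad (A,\varphi)\mapsto(\operatorname{tr}A,\varphi).
\]
First we check that $\tau$ is a DGLA morphism. This uses exactly the trace identities already listed in the proof of Proposition~\ref{prop-1}: $\operatorname{tr}\bar\partial_{\End E}=\bar\partial\operatorname{tr}$, $\operatorname{tr}(\varphi\lrcorner R_E)=\varphi\lrcorner\operatorname{tr}R_E$, $\operatorname{tr}[A,B]=0$, and $\operatorname{tr}(\varphi\lrcorner D^{1,0}_{\End E}B)=\varphi\lrcorner\partial\operatorname{tr}B$. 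Hence $\tau$ commutes with the differentials and brackets, and it sends Maurer--Cartan elements to Maurer--Cartan elements, compatibly with gauge actions.

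Next we identify the kernel and the induced maps on cohomology. The map $\tau$ is surjective, with kernel $A^{0,\bullet}(X,\End^0E)$, a sub-DGLA (indeed an ideal) with differential $\bar\partial_{\End E}$ restricted. We therefore get a short exact sequence of complexes
\[
0\to A^{0,\bullet}(X,\End^0E)\to \mathbf L_E\xrightarrow{\ \tau\ }\mathbf L_{\det E}\to 0 .
\]
Its long exact sequence gives that $H^1(\tau)$ is surjective, since the connecting term lands in $H^2(X,\End^0E)=0$. It also gives that $H^2(\tau)$ is injective, again because $H^2(X,\End^0E)=0$.

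We then invoke the standard criterion (e.g.\ \cite[Man22]{Man22}, the smoothness criterion for morphisms of deformation functors associated with DGLAs): if $H^1(\tau)$ is surjective and $H^2(\tau)$ is injective, then $\operatorname{Def}_{\mathbf L_E}\to\operatorname{Def}_{\mathbf L_{\det E}}$ is smooth. Concretely, take a small extension $0\to I\to A'\to A\to 0$, a pair $(X_A,E_A)$ given by $x\in\mathbf L_E^1\otimes\mathfrak m_A$, and a lift $y'$ of $\tau(x)$. Lift $x$ arbitrarily to $\tilde x$; we may choose $\tilde x$ with $\tau(\tilde x)=y'$ by surjectivity of $\tau$ at cochain level. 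The obstruction class $[\bar\partial\tilde x-\tfrac12[\tilde x,\tilde x]]\in H^2(X,A(E))\otimes I$ maps under $\tau$ to the obstruction for $y'$, which vanishes. By injectivity of $H^2(\tau)$, the obstruction itself vanishes. We then correct $\tilde x$ by a cocycle in $\mathbf L^1_E\otimes I$ chosen to keep $\tau(\tilde x)$ within the gauge class of $y'$. This last step uses surjectivity on $H^1$, or directly a correction by an element of the kernel complex, which is possible because $H^2(X,\End^0E)=0$. This yields an element of $\operatorname{Def}(X,E)(A')$ over the given data.

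The main obstacle is this final compatibility step: ensuring that the corrected lift maps exactly to the prescribed $y'$ up to gauge equivalence, rather than merely to some lift of $\tau(x)$. We handle it by working with $\tilde x$ satisfying $\tau(\tilde x)=y'$ on the nose. The remaining obstruction then lies in $\ker\tau$, namely in the trace-free complex, with class in $H^2(X,\End^0E)\otimes I=0$. So it is killed by a correction in $A^{0,1}(X,\End^0E)\otimes I$, which leaves $\tau(\tilde x)=y'$ unchanged. Nothing here uses projectivity, so the argument works for any compact complex manifold. The final claim follows because a smooth morphism onto a smooth (unobstructed) functor has smooth source.
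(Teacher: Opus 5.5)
Your argument is correct and is essentially the proof behind the cited results of Iacono--Manetti and Felten, which the paper invokes without reproducing: the trace $(A,\varphi)\mapsto(\operatorname{tr}A,\varphi)$ is a surjective DGLA morphism with kernel $A^{0,\bullet}(X,\End^0E)$. The same mechanism appears analytically in the proof of Proposition~\ref{prop-1}, where $\varphi(t)$ and $a(t)=\operatorname{tr}A(t)$ are fixed and the trace-free equation \eqref{tf} is solved order by order using $H^2(X,\End^0E)=0$; your final lifting step, with $\tau(\tilde x)=y'$ imposed exactly after first gauging $y'$ to reduce to $\tau(x)$, is the cleanest version and needs only this vanishing (your earlier phrase about correcting by a cocycle conflates the cochain correction that kills the obstruction with the separate cocycle correction from surjectivity of $H^1(\tau)$, but nothing depends on it).
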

	\end{proof}

	\subsection{\texorpdfstring{${T^1}$}{T1}-lifting proofs of
		Theorem \ref{main thm} and  the
		\texorpdfstring{${\partial\bar\partial}$}{ddbar}-manifold case of
		Theorem \ref{main thm2}}
	
	\medskip
	
	\begin{proof}[{Proof of Theorem \ref{main thm2} when $X$ is a $K$-trivial  Calabi--Yau $\p\bp$-manifold}]
		
		Building on the sharp observation of Iacono--Manetti
		\cite[Remark 2.6]{IM21}, we show that their \(T^1\)-lifting argument
		also applies in our setting and provide further details for the
		reader's convenience.

		By Proposition \ref{reduction}, it suffices to consider the line bundle case. Set $L=\det E$.	Put the Artinian base \(A_m=\mathbb C[\epsilon]/(\epsilon^{m+1})\), and let
		\((X_m,L_m)\) be an arbitrary deformation of \((X,L)\) over \(A_m\).  Write
		\(\ell_m(\cdot)\) for the length of an \(A_m\)-module.
		
		By Deligne's Artinian length criterion \cite[(3.5.1)]{Del68},
		if \(C^\bullet\) is a bounded complex of finite free \(A_m\)-modules, then
		\begin{equation}\label{deligne}
			\ell_m H^i(C^\bullet)
			\leq
			(m+1)\dim_{\mathbb C}
			H^i(C^\bullet\otimes_{A_m}\mathbb C).
		\end{equation}
		Equality implies that \(H^i(C^\bullet)\) is free over \(A_m\) and compatible with reduction to $A_{m-1}$; namely, $$H^i(C^\bullet)\otimes_{A_m}A_{m-1}
		\xrightarrow{\simeq}
		H^i(C^\bullet\otimes_{A_m}A_{m-1}).$$
		
		Set
		\[
		H_m^{p,q}:=
		H^q(X_m,\Omega^p_{X_m/A_m}).
		\]
		The relative de Rham cohomology is topologically constant:
		\[
		H^k_{\mathrm{dR}}(X_m/A_m)
		\simeq H^k(X,\mathbb C)\otimes_{\mathbb C}A_m.
		\]
		Consequently, applying \eqref{deligne} to the Hodge--de Rham spectral sequence gives
		\begin{equation}\label{Degeneration}
			(m+1)b_k
			\leq
			\sum_{p+q=k}\ell_m H_m^{p,q}
			\leq
			(m+1)\sum_{p+q=k}h^{p,q}_{\bp}(X).
		\end{equation}
		Since the \(\partial\bar\partial\)-lemma on $X_0$ implies degeneration at \(E_1\),
		\[
		b_k=\sum_{p+q=k}h^{p,q}_{\bp}(X).
		\]
		Thus equality holds throughout \eqref{Degeneration}.  Hence every \(H_m^{p,q}\) is free,
		satisfies base change and the relative Hodge--de Rham spectral
		sequence over  \(A_m\) degenerates at \(E_1\).
		
		In particular, a holomorphic volume form on \(X\) lifts to \(X_m\) and
		trivializes \(K_{X_m/A_m}\).  Contraction with this form gives
		\begin{equation}\label{iso}
			T^{1,0}_{X_m/A_m}\simeq\Omega^{n-1}_{X_m/A_m},
			\quad
			\mathcal O_{X_m}\simeq\Omega^n_{X_m/A_m}.
		\end{equation}

		Consider the relative Atiyah sequence
		\[
		0\longrightarrow\mathcal O_{X_m}
		\longrightarrow A(L_m)
		\longrightarrow T^{1,0}_{X_m/A_m}
		\longrightarrow0.
		\]
		Its cohomology sequence contains
		\[
		H^0(T^{1,0}_{X_m/A_m})
		\xrightarrow{\alpha_{0,m}}H^1(\mathcal O_{X_m})
		\longrightarrow H^1(A(L_m))
		\longrightarrow H^1(T^{1,0}_{X_m/A_m})
		\xrightarrow{\alpha_{1,m}}H^2(\mathcal O_{X_m}),
		\]
		and therefore
		\begin{equation} \label{sequence}
			0\longrightarrow\operatorname{coker}\alpha_{0,m}
			\longrightarrow H^1(A(L_m))
			\longrightarrow\ker\alpha_{1,m}
			\longrightarrow0.
		\end{equation}

		Under \eqref{iso}, the two boundary maps are the Hodge components of cup product
		with \(c_1(L_m)\):
		\begin{equation}\label{chern}
			\alpha_{i,m}\colon
			H_m^{n-1,i}
			\xrightarrow{\;\cup c_1(L_m)\;}
			H_m^{n,i+1},
			\quad i=0,1.
		\end{equation}
		
		\medskip
		\textit{We now use the full strength of the \(\partial\bar\partial\)-lemma, rather than merely \(E_1\)-degeneration}. It identifies \(H_{\bar\partial}^{p,q}(X)\) with the subspace of \(H_{\mathrm{dR}}^{p+q}(X)\) represented by \(d\)-closed forms of pure type \((p,q)\), and gives the decomposition
		\[
		H_{\mathrm{dR}}^k(X,\mathbb C)
		=
		\bigoplus_{p+q=k}H^{p,q}_{\bp}(X).
		\]
		
		\textit{In what follows,  we fix $k=n-1,n$.}     Consider the total cup-product maps
		\begin{equation*}
			T_0^k:
			H_{\mathrm{dR}}^k(X,\mathbb C)
			\longrightarrow
			H_{\mathrm{dR}}^{k+2}(X,\mathbb C)
		\end{equation*}
		and
		\begin{equation*}
			T_m^k:
			H_{\mathrm{dR}}^k(X_m/A_m)
			\longrightarrow
			H_{\mathrm{dR}}^{k+2}(X_m/A_m).
		\end{equation*}
		Since $k+2>n$, every nonzero Hodge component of
		$H_{\mathrm{dR}}^{k+2}(X,\mathbb C)$ has both bidegrees at least
		one. Consequently,
		\begin{equation*}
			H_{\mathrm{dR}}^{k+2}(X,\mathbb C)
			=
			\bigoplus_{p+q=k}
			H_{\bar\partial}^{p+1,q+1}(X),
		\end{equation*}
		and $T_0^k$ decomposes as the direct sum of the maps
		\begin{equation*}
			T_0^{p,q}:
			H_{\bar\partial}^{p,q}(X)
			\longrightarrow
			H_{\bar\partial}^{p+1,q+1}(X).
		\end{equation*}
		It follows that
		\begin{equation}\label{8}
			\dim_{\mathbb C}\ker T_0^k
			+
			\dim_{\mathbb C}\operatorname{coker}T_0^k
			=
			\sum_{p+q=k}
			\left(
			\dim_{\mathbb C}\ker T_0^{p,q}
			+
			\dim_{\mathbb C}\operatorname{coker}T_0^{p,q}
			\right).
		\end{equation}
		
		Under the topological identification of relative de Rham cohomology, \(c_1(L_m)\) corresponds to \(c_1(L)\otimes1\), hence
		\begin{equation}\label{9}
			T_m^k=T_0^k\otimes_{\mathbb C}A_m. 
		\end{equation}
		Moreover, $T_m^k$ raises the relative Hodge filtration by one, and its associated-graded maps  are
		\[
		T_m^{p,q}\colon
		H_m^{p,q}\longrightarrow H_m^{p+1,q+1},
		\quad p+q=k.
		\]
		These include the boundary maps in \eqref{chern}:
		\[
		\alpha_{i,m}=T_m^{n-1,i},
		\quad i=0,1.
		\]
		
		Applying the spectral-sequence length inequality to the filtered two-term complex defined by \(T_m^k\), and Deligne's inequality \eqref{deligne} to the two-term complexes defined by \(T_m^{p,q}\), gives
		\begin{equation*}\label{length-chain}
			\begin{aligned}
				(m+1)\Bigl(
				\dim_{\mathbb C}\ker T_0^k
				+\dim_{\mathbb C}\operatorname{coker}T_0^k
				\Bigr)
				&=
				\ell_m\bigl(\ker T_m^k\bigr)
				+\ell_m\bigl(\operatorname{coker}T_m^k\bigr)\\[0.5em]
				&\le
				\sum_{p+q=k}\Bigl(
				\ell_m\bigl(\ker T_m^{p,q}\bigr)
				+\ell_m\bigl(\operatorname{coker}T_m^{p,q}\bigr)
				\Bigr)\\[0.5em]
				&\le
				(m+1)\sum_{p+q=k}\Bigl(
				\dim_{\mathbb C}\ker T_0^{p,q}
				+\dim_{\mathbb C}\operatorname{coker}T_0^{p,q}
				\Bigr)\\[0.5em]
				&=
				(m+1)\Bigl(
				\dim_{\mathbb C}\ker T_0^k
				+\dim_{\mathbb C}\operatorname{coker}T_0^k
				\Bigr).
			\end{aligned}
		\end{equation*}
		where the first equality follows from \eqref{9}, and the last from \eqref{8}. Hence equality holds throughout.
		
		As a result, 
		\[
		\operatorname{coker}\alpha_{0,m}
		=
		\operatorname{coker}T_m^{n-1,0},
		\quad
		\ker\alpha_{1,m}
		=
		\ker T_m^{n-1,1}
		\]
		are free and compatible with  base change. 
		It now follows from \eqref{sequence} that \(H^1(A(L_m))\) is free over \(A_m\) and that
		\[
		H^1(A(L_m))\otimes_{A_m}A_{m-1}
		\simeq
		H^1(A(L_{m-1})).
		\]
		In particular,
		\[
		H^1(A(L_m))
		\longrightarrow
		H^1(A(L_{m-1}))
		\]
		is surjective. This is precisely the  \textit{\(T^1\)-lifting property} for the pair \((X,L)\)  and hence it  has  unobstructed deformations.

	\end{proof}
	
	\begin{proof}[{Proof of  Theorem \ref{main thm}}]
		We first   record a useful consequence due to Wells. 
		\begin{lemma}[{\cite[Theorem 3.1 (c)]{Wel74}}]\label{wells}
			Let $f: Y \to X$ be a proper surjective morphism between two equidimensional complex manifolds, and let $E$ be a holomorphic vector bundle over $X$. Then $f$ induces injections for all $p$ and $q$:
			\[
			f^*: H^{p,q}_{\bp}(X, E) \hooklongrightarrow H^{p,q}_{\bp}(Y, f^*E).
			\]
		\end{lemma}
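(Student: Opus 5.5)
The plan is the standard trace/pushforward argument: exhibit a left inverse of $f^*$, or equivalently prove that $f^*$ detects exactness by pairing against test forms via Serre duality. Since $Y$ and $X$ have the same dimension $n$ and $f$ is proper and surjective, $f$ is generically finite of some degree $d\geq 1$. Moreover, there is a proper analytic subset $S\subset X$ such that $f\colon Y\setminus f^{-1}(S)\to X\setminus S$ is a finite unbranched covering. We may assume $X$ (hence each connected piece of $Y$ hitting it) is compact, or else work with compactly supported test forms, since Serre duality for Dolbeault cohomology is what is needed.

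Step 1: define the pushforward of currents. For a smooth $E$-valued form $u$ on $Y$, set $f_*u$ to be the current on $X$ given by $\langle f_*u,\gamma\rangle=\int_Y u\wedge f^*\gamma$ for test forms $\gamma$ with values in $E^\vee\otimes K$-appropriate bidegree. Since $f$ is holomorphic, $f_*$ preserves bidegree (the dimensions are equal) and commutes with $\bar\partial$ (because $f^*$ does and Stokes holds on the compact $Y$). We also have the projection formula $f_*f^*\beta=d\cdot\beta$ as currents. To see this, compute on $X\setminus S$, where $f$ is a $d$-sheeted covering. The set $f^{-1}(S)$ has measure zero, and $S$ has measure zero, so the integrals agree.

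Step 2: prove injectivity. Suppose $\beta$ is a $\bar\partial$-closed smooth $E$-valued $(p,q)$-form with $f^*\beta=\bar\partial u$ on $Y$. Applying $f_*$ gives $d\,\beta=\bar\partial(f_*u)$ in the sense of currents. Dolbeault cohomology computed by currents agrees with that computed by smooth forms (regularity of $\bar\partial$ via the Dolbeault–Grothendieck lemma for currents, i.e.\ both complexes are fine resolutions of $\Omega^p(E)$). Hence $\beta$ is $\bar\partial$-exact by a smooth form, and $[\beta]=0$.

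Alternatively, one can avoid currents as in the proof of Theorem~\ref{dbar-Fujiki}. Pair $\beta$ with an arbitrary $\bar\partial$-closed $\gamma\in A^{n-p,n-q}(X,E^\vee)$:
\[
d\int_X\beta\wedge\gamma=\int_Y f^*\beta\wedge f^*\gamma=\int_Y\bar\partial(u\wedge f^*\gamma)=0 .
\]
Serre duality on compact $X$ then gives $[\beta]=0$. I would write the proof this way since it uses only Stokes and Serre duality.

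The main obstacle is the identity $\int_Y f^*(\beta\wedge\gamma)=d\int_X\beta\wedge\gamma$ for a merely generically finite map. The fibres over $S$ may be positive-dimensional, so the image of the critical set of $f$ must be handled. I would handle it by noting that the top-degree form $f^*(\beta\wedge\gamma)$ is smooth, hence integrable, and that $f^{-1}(S)$ is a proper analytic subset, hence Lebesgue-null. The integral therefore reduces to the covering over $X\setminus S$, where the change of variables on each of the $d$ sheets is immediate. Connectedness of $X$ is used to make $d$ constant. If $Y$ is disconnected, restrict to a component that surjects onto $X$, which exists by properness and irreducibility.
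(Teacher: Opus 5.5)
Your proposal is correct and is essentially the argument of Wells that the paper invokes by citation without reproducing it: push forward by currents, prove $f_*f^*=d\cdot\mathrm{id}$ on the unbranched locus over $X\setminus f(\mathrm{Crit}\,f)$ (the excluded sets are Lebesgue-null), and conclude because currents compute Dolbeault cohomology. One caveat: the Serre-duality variant you say you would write up proves only the case of compact $X$, since for noncompact $X$ the pairing with compactly supported classes can be degenerate when $H^{p,q}(X,E)$ is non-Hausdorff. That case is all the paper needs, for the finite \'etale cover $\pi\colon\widetilde X\to X$, but for the statement as given you should keep the current-theoretic version.
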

		If $X$ is a Fujiki manifold then it is a
		$\p\bp$-manifold. By the above $T^1$-lifting proof, it suffices to
		consider the case in which $K_X$ is torsion. Then there exists a finite
		\'etale cover
		\(\pi:\widetilde{X}\to X\) such that
		\(K_{\widetilde{X}} \simeq \mathcal{O}_{\widetilde{X}}\)
		(see, e.g., \cite{Par91}). 
		Let \(f:Y\to X\) be a proper modification from a compact K\"ahler
		manifold \(Y\), and set
		\(\widetilde{Y}:=Y\times_X\widetilde{X}\).
		We then obtain the commutative diagram
		\begin{equation}\label{diagram}
			\begin{tikzcd}
				\widetilde{Y}
				\arrow[d, "\widetilde{\pi}"']
				\arrow[r, "\widetilde{f}"]
				& \widetilde{X} \arrow[d, "\pi"] \\
				Y \arrow[r, "f"] & X.
			\end{tikzcd}
		\end{equation}
		By stability under base change, \(\widetilde{\pi}\) remains finite
		\'etale, while \(\widetilde{f}\) is bimeromorphic.
		In particular, \(\widetilde{Y}\) is again a compact K\"ahler manifold, therefore \(\widetilde{X}\) is also a Fujiki manifold.

		Now for any \(N\geq 1\), let
		\[
		v_N := \frac{1}{2}\left\{\left[\sum_{i=1}^{N}(A_i,\varphi_i),
		(A_{N+1-i},\varphi_{N+1-i})\right]\right\}
		\;\in\; H^2\bigl(X,A(L)\bigr), \,\,\text{with }L=\det E,
		\]
		be an \(N\)-th order obstruction class.
		Since \(\pi\) is \'etale, it is proper and surjective, Lemma \ref{wells} implies that
		the pull-back map
		\[
		\pi^* \colon H^2\bigl(X,A(L)\bigr)
		\longrightarrow
		H^2\bigl(\widetilde{X},A(\pi^*L)\bigr)
		\]
		is injective, which completes the proof.
	\end{proof}

	\subsection{Proof of Corollary \ref{simple}}
	\begin{proof}
		Since $K_X\simeq\mathcal O_X$ and
		$(\operatorname{End}^0E)^\vee\simeq\operatorname{End}^0E$,
		Serre duality gives
		$$
		H^2(X,\operatorname{End}^0E)
		\simeq
		H^0(X,\operatorname{End}^0E)^\vee
		=0,
		$$
		where the last equality follows from the simplicity of $E$.
		To apply Theorem \ref{main thm2}, it remains to verify the three weak $\p\bp$-conditions. For this, we use the following observation. 
		\begin{proposition}\label{prop:weak-surface}
			Let \(X\) be a compact complex surface. Then \(X\) always
			satisfies \(\mathbb S^{2,2}\) and \(\mathbb B^{1,2}\).
			Moreover, \(X\) satisfies \(\mathbb S^{2,1}\)
			if and only  \(X\) is K\"ahler.
		\end{proposition}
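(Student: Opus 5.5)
We treat the three assertions separately, unwinding the definitions of $\mathbb S^{p,q}$ and $\mathbb B^{p,q}$ in dimension $n=2$.

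\emph{$\mathbb S^{2,2}$.} Here $\alpha\in A^{2,2}(X)$, so $\partial\alpha\in A^{3,2}(X)=0$. The equation $\bar\partial\beta=\partial\alpha=0$ is solved by $\beta=0$, and this condition is vacuous.

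\emph{$\mathbb B^{1,2}$.} Let $\alpha\in A^{1,2}(X)$. Then $\partial\alpha\in A^{2,2}(X)$ is a top-degree form, and $\partial\bar\partial\alpha=0$ holds automatically since $\bar\partial\alpha\in A^{1,3}=0$. We must write $\partial\alpha=\bar\partial\partial\gamma$ for some $\gamma\in A^{1,1}(X)$. First, $\partial\alpha=d\alpha$, because $\bar\partial\alpha=0$ for type reasons, so $\int_X\partial\alpha=0$ by Stokes. Next, use that on any compact complex manifold the Aeppli/Bott--Chern cohomology in top bidegree is $H^{n,n}_{BC}(X)\simeq\mathbb C$, detected by integration. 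Concretely, a top form with zero integral is $\partial\bar\partial$-exact; this is the dual statement to $H^{0,0}_{A}(X)=\{\text{constants}\}$, i.e.\ pluriharmonic functions on a compact manifold are constant, by the maximum principle. Hence $\partial\alpha=\partial\bar\partial u$ with $u\in A^{1,1}(X)$, and so $\partial\alpha=\bar\partial(-\partial u)$ with $-\partial u$ $\partial$-exact. To justify the Bott--Chern step, I would argue with the elliptic operator $\partial\bar\partial$ on functions versus $(n,n)$-forms. Its adjoint acting from $(n,n)$-forms to functions has kernel equal to the pluriharmonic functions, which are constant. Hence the image of $\partial\bar\partial\colon A^{n-1,n-1}\to A^{n,n}$ is the orthogonal complement of the constants, with closed range by ellipticity of the Bott--Chern Laplacian.

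\emph{$\mathbb S^{2,1}$ versus K\"ahler.} If $X$ is K\"ahler it satisfies the $\partial\bar\partial$-lemma, so $\mathbb S^{2,1}$ holds by \eqref{www}. Conversely, assume $\mathbb S^{2,1}$ and take $\alpha\in A^{2,1}$; then $\partial\alpha=0$ trivially, so that choice gives nothing. The relevant forms are instead $\alpha\in A^{1,1}$ with $\partial\alpha\in A^{2,1}$. Since $\mathbb S^{2,1}$ with $\alpha\in A^{2,1}$ would force $p=2$, the correct reading is $\alpha\in A^{p-1,q}=A^{1,1}$ producing $\partial\alpha\in A^{2,1}$.

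The plan is to show that $\mathbb S^{2,1}$ forces $b_1$ even, and then invoke \cite[Theorem~11]{Buchdahl} / \cite[Theorem~IV.3.1]{BHPV04}. The standard mechanism: $b_1$ is odd iff the natural map $H^{0,1}_{\bar\partial}\to$ (conjugates of $H^0(\Omega^1)$) fails to be bijective. Equivalently, there exists a $\bar\partial$-closed $(0,1)$-form $\tau$ with $\partial\tau\neq0$ whose class is not represented by a $d$-closed form. I would take $\alpha=\bar{\ }$-type test forms built from such $\tau$; for instance, $\alpha=\tau\wedge\bar\sigma$-type $(1,1)$ forms, or directly use that on surfaces $\partial\tau\in A^{1,1}$ is $\bar\partial$-closed but not $\bar\partial$-exact. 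Conjugating and using Serre duality converts this into a $\partial\alpha\in A^{2,1}$ with $\partial\bar\partial\alpha=0$ that is not $\bar\partial$-exact: its pairing with a suitable $\bar\partial$-closed $(0,1)$-form is nonzero. This contradicts $\mathbb S^{2,1}$.

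\emph{Main obstacle.} The main obstacle is this last construction: producing, from odd $b_1$, an explicit $\partial$-exact, non-$\bar\partial$-exact $(2,1)$-form, with the obstruction to exactness detected by a nonzero pairing. I would try first to use the Lamari/Buchdahl characterization that non-K\"ahler surfaces carry a nonzero $(1,1)$-current that is positive and $\partial\bar\partial$-exact. Alternatively, I would use Serre duality $H^{2,1}_{\bar\partial}\simeq \overline{H^{0,1}_{\bar\partial}}^\vee$ together with the failure of $H^{0,1}\simeq\overline{H^{1,0}}$ when $b_1$ is odd.
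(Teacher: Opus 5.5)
\textbf{Index convention, and the missing $\mathbb B^{1,2}$.} In the first two parts you use the wrong index convention. In the definition, $\partial\alpha\in A^{p,q}$, so $\alpha\in A^{p-1,q}$; you notice this yourself only in the third part. Consequently ``$\mathbb S^{2,2}$ is vacuous'' is false: $\mathbb S^{2,2}$ concerns $\alpha\in A^{1,2}$. Your second paragraph (Stokes plus $H^{2,2}_{\mathrm{BC}}(X)\simeq\mathbb C$) is a correct proof of exactly this statement, even with a $\partial$-exact solution. So after relabelling it gives $\mathbb S^{2,2}$; the paper argues the same way with $H^{2,2}_{\bar\partial}(X)\simeq\mathbb C$ from Serre duality.

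But the actual $\mathbb B^{1,2}$ is never treated: given $\alpha\in A^{0,2}$, find $\gamma\in A^{0,1}$ with $\partial\alpha=\bar\partial\partial\gamma$. This is not a top-degree statement, so integration cannot decide it. The paper gets it from the injectivity of $H^{2,1}_{\mathrm{BC}}(X)\to H^{2,1}_{\bar\partial}(X)$ on surfaces \cite[Theorem~1.1 (i)]{ADT16}, followed by conjugation. A self-contained replacement: the map $\overline{H^0(X,K_X)}\to H^{0,2}_{\bar\partial}(X)$ is injective, since $\bar\omega=\bar\partial\gamma$ gives $\int_X\omega\wedge\bar\omega=\int_X d(\omega\wedge\gamma)=0$ and hence $\omega=0$. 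It is therefore bijective because $h^{2,0}=h^{0,2}$. So every $\alpha\in A^{0,2}$ equals $\bar\omega+\bar\partial\gamma$ with $\partial\bar\omega=0$, and $\partial\alpha=\bar\partial(-\partial\gamma)$.

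\textbf{The implication $\mathbb S^{2,1}\Rightarrow$ K\"ahler.} This is not proved, and the concrete step you suggest fails. If $\tau\in A^{0,1}$ is $\bar\partial$-closed with no $d$-closed representative, $\partial\tau$ can still be $\bar\partial$-exact: on a Hopf surface $b_1=1$, yet $H^{1,1}_{\bar\partial}=0$ because $b_2=0$. The missing idea is Bott--Chern/Aeppli duality. What does hold is that $\partial\tau$ is $d$-closed but not $\partial\bar\partial$-exact, since otherwise $\tau-\bar\partial f$ would be $d$-closed. Since $H^{1,1}_{\mathrm{BC}}(X)\simeq H^{1,1}_{\mathrm A}(X)^\vee$, there is $\mu\in A^{1,1}$ with $\partial\bar\partial\mu=0$ and $\int_X\tau\wedge\partial\mu=\int_X\partial\tau\wedge\mu\neq0$. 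By Stokes, $\int_X\tau\wedge\bar\partial\beta=0$ for every $\beta\in A^{2,0}$. Hence $\partial\mu$ is the $\partial$-exact, non-$\bar\partial$-exact $(2,1)$-form you were looking for, contradicting $\mathbb S^{2,1}$. Your criterion that $b_1$ is odd iff some class in $H^{0,1}_{\bar\partial}$ has no $d$-closed representative is correct, via $E_1$-degeneration; together with the above it finishes the argument.

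This is the dual form of the paper's proof. The paper reads $\mathbb S^{2,1}$ as injectivity of $H^{2,1}_{\bar\partial}\to H^{2,1}_{\mathrm A}$ and dualizes it to surjectivity of $H^{0,1}_{\mathrm{BC}}\to H^{0,1}_{\bar\partial}$. It then combines $E_1$-degeneration with the conjugation symmetry of Bott--Chern cohomology to get $h^{1,0}_{\bar\partial}=h^{0,1}_{\bar\partial}$.
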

		
		\begin{proof}
			We first recall the cohomological characterization of the weak
			conditions. The Bott--Chern and Aeppli cohomology groups are
			defined by
			\begin{equation*}
				H_{\mathrm{BC}}^{p,q}(X)
				:=
				\frac{
					\ker\partial\cap\ker\bar\partial
				}{
					\operatorname{im}\partial\bar\partial
				},
				\qquad
				H_{\mathrm A}^{p,q}(X)
				:=
				\frac{
					\ker\partial\bar\partial
				}{
					\operatorname{im}\partial+\operatorname{im}\bar\partial
				}.
			\end{equation*}
			By \cite[p.  3009]{RZ18}, the condition
			\(\mathbb S^{p,q}\) is equivalent to the injectivity of
			\begin{equation}\label{eq:characterization-S}
				\iota_{\bar\partial,\mathrm A}^{p,q}:
				H_{\bar\partial}^{p,q}(X)
				\longrightarrow
				H_{\mathrm A}^{p,q}(X),
			\end{equation}
			whereas \(\mathbb B^{p,q}\) is equivalent to the injectivity of
			\begin{equation}\label{eq:characterization-B}
				\iota_{\mathrm{BC},\partial}^{p,q}:
				H_{\mathrm{BC}}^{p,q}(X)
				\longrightarrow
				H_{\partial}^{p,q}(X).
			\end{equation}

			We first verify \(\mathbb S^{2,2}\). Let
			\(\partial g\in A^{2,2}(X)\). Since
			\(\bar\partial g=0\) for dimensional reasons, Stokes' theorem
			gives
			\begin{equation*}
				\int_X\partial g=\int_Xdg=0.
			\end{equation*}
			By Serre duality, integration identifies
			\(H_{\bar\partial}^{2,2}(X)\) with \(\mathbb C\). Hence
			\([\partial g]_{\bar\partial}=0\), proving
			\(\mathbb S^{2,2}\).
			
			Second, $\mathbb B^{1,2}$ always holds. Indeed, by \cite[Theorem  1.1 (i)]{ADT16}, the natural map\begin{equation*}H_{\mathrm{BC}}^{2,1}(X)\longrightarrow H_{\bar\partial}^{2,1}(X)\end{equation*}is injective for every compact complex surface. Taking complex conjugates then gives the injectivity of $H_{\mathrm{BC}}^{1,2}(X)\longrightarrow H_{\partial}^{1,2}(X)$ and hence $X$ satisfies $\mathbb B^{1,2}$.
			
			It remains to characterize the condition $\mathbb S^{2,1}$.
			Suppose that $X$ satisfies $\mathbb S^{2,1}$. By
			\eqref{eq:characterization-S} and duality, the natural map
			\begin{equation*}
				H_{\mathrm{BC}}^{0,1}(X)
				\longrightarrow
				H_{\bar\partial}^{0,1}(X)
			\end{equation*}
			is surjective. This map is always injective. Indeed, if
			$\alpha=\bar\partial f$ represents a Bott--Chern class, then
			$\partial\alpha=0$ gives $\partial\bar\partial f=0$. Since every
			global pluriharmonic function on a compact complex manifold is
			constant, we have $\alpha=0$. Consequently,
			\begin{equation}\label{eq:BC01-bar01}
				h_{\mathrm{BC}}^{0,1}(X)
				=
				h_{\bar\partial}^{0,1}(X).
			\end{equation}
			
			On the other hand, the Fr\"olicher spectral sequence of every compact
			complex surface degenerates at $E_1$
			\cite[Theorem  IV.2.8]{BHPV04}. In particular, every holomorphic
			one-form is $d$-closed, and hence
			\begin{equation}\label{eq:BC10-bar10}
				h_{\mathrm{BC}}^{1,0}(X)
				=
				h_{\bar\partial}^{1,0}(X).
			\end{equation}
			Complex conjugation for Bott--Chern cohomology gives
			$
			h_{\mathrm{BC}}^{1,0}(X)
			=
			h_{\mathrm{BC}}^{0,1}(X).
			$
			Combining this equality with
			\eqref{eq:BC01-bar01} and \eqref{eq:BC10-bar10}, we obtain
			\begin{equation*}
				h_{\bar\partial}^{1,0}(X)
				=
				h_{\bar\partial}^{0,1}(X).
			\end{equation*}
			
			The $E_1$-degeneration also gives
			\begin{equation*}
				b_1(X)
				=
				h_{\bar\partial}^{1,0}(X)
				+
				h_{\bar\partial}^{0,1}(X)
				=
				2h_{\bar\partial}^{0,1}(X).
			\end{equation*}
			Thus $b_1(X)$ is even, and therefore $X$ is K\"ahler by
			\cite[Theorem  11]{Buchdahl} or 	\cite[Theorem  IV.3.1]{BHPV04}.
			
			Conversely, if $X$ is K\"ahler, then it satisfies the
			$\partial\bar\partial$-lemma and hence $\mathbb S^{2,1}$.
		\end{proof}
		The proof of Corollary \ref{simple} is therefore completed.
	\end{proof}
	
	\subsection{Examples satisfying the three weak $\p\bp$-conditions  without $E_1$-degeneration}
	
	We now give two examples illustrating different features of the three
	weak conditions in Theorem  \ref{main thm2}. Both have trivial canonical
	bundle and satisfy these three conditions, but their Fr\"olicher
	spectral sequences do not degenerate at $E_1$; in particular, neither
	satisfies the full $\p\bp$-lemma.
	\begin{itemize}
		\item[$\centerdot$]
		The first is a compact complex threefold $X$ for which
		$H^2(X,A(E))=0$ for every holomorphic vector bundle $E\rightarrow X$
		satisfying $H^2(X,\operatorname{End}^{0}E)=0$.
		Thus unobstructedness already follows from the vanishing
		of the obstruction space.

		\medskip
		\item[$\centerdot$]
		The second is a compact complex sixfold $X$ admitting a
		simple holomorphic vector bundle $E$ of rank $2$ such that
		$H^2(X,\operatorname{End}^{0}E)=0$, whereas
		\begin{equation*}
			H^2(X,\operatorname{End}E)\simeq\mathbb C,
			\quad
			H^2(X,A(E))\simeq\mathbb C^2.
		\end{equation*}
		Moreover, the forgetful morphism
		$\operatorname{Def}(X,E)\to\operatorname{Def}(X)$
		is not smooth.

		Thus our Tian--Todorov argument proves unobstructedness under
		the three weak $\p\bp$-conditions, even when the  arguments
		based on obstruction-space vanishing or smoothness of the
		forgetful morphism are unavailable.
	\end{itemize}

	\medskip
	
	We verify the weak conditions in these examples using \eqref{eq:characterization-S} and \eqref{eq:characterization-B}.
	\begin{proposition}\label{prop:weak-ddbar-threefold}
		There exists a compact complex threefold $X$ such that:
		\begin{enumerate}[{(1)}]
			\item $K_X\simeq\mathcal O_X$;
			\item \label{2222} $X$ satisfies
			$\mathbb S^{3,1}$, $\mathbb S^{3,2}$ and
			$\mathbb B^{2,2}$;
			\item \(
			E_1^{\bullet,\bullet}(X)
			\not\simeq
			E_\infty^{\bullet,\bullet}(X),
			\)
			then in particular, \(X\) does not satisfy the full
			\(\partial\bar\partial\)-lemma;
			\item  \label{4444} for any holomorphic vector bundle $E\rightarrow  X$  with $H^2(X,\operatorname{End}^{0}E)=0$, one has  $H^2(X, A(E))=0$.
		\end{enumerate}
	\end{proposition}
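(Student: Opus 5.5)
I would take $X$ to be a compact complex parallelizable nilmanifold (or solvmanifold) $X=\Gamma\backslash G$ of complex dimension three. The obvious candidate is the Iwasawa manifold, or a completely solvable/Nakamura-type quotient if the Iwasawa manifold fails one of the conditions. Property (1) is automatic for such quotients: the left-invariant holomorphic $(1,0)$-forms $\omega^1,\omega^2,\omega^3$ give a global holomorphic frame, so $T_X^{1,0}$ and $K_X$ are holomorphically trivial ($\Omega=\omega^1\wedge\omega^2\wedge\omega^3$).

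For the cohomological statements (2) and (3), I would use the standard reduction to invariant forms. For nilmanifolds with nilpotent complex structure, Dolbeault cohomology is computed by invariant forms (Sakane, Console--Fino). Bott--Chern and Aeppli cohomology are likewise computed by invariant forms, by Angella's results. For the Iwasawa manifold with $d\omega^3=-\omega^1\wedge\omega^2$, all of these groups are tabulated in the literature (Angella, Popovici--Ugarte). Using the characterizations \eqref{eq:characterization-S} and \eqref{eq:characterization-B}, I would check three things by finite-dimensional linear algebra on $\Lambda^{\bullet,\bullet}\mathfrak g^*$:
\begin{itemize}
\item injectivity of $H^{3,1}_{\bar\partial}\to H^{3,1}_{\mathrm A}$;
\item injectivity of $H^{3,2}_{\bar\partial}\to H^{3,2}_{\mathrm A}$;
\item injectivity of $H^{2,2}_{\mathrm{BC}}\to H^{2,2}_{\partial}$.
\end{itemize}
Non-degeneration in (3) is classical for the Iwasawa manifold: $\omega^3$ is holomorphic but not $d$-closed, so $d_1\neq 0$. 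If the Iwasawa manifold fails one of the injectivities, presumably $\mathbb B^{2,2}$, I would instead try a small deformation or another parallelizable solvmanifold (for instance Nakamura's) and redo the same invariant computation.

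For (4), the key point is that $T_X^{1,0}\simeq\mathcal O_X^{3}$. Consider the Atiyah sequence
\[
0\to\End E\to A(E)\to T_X^{1,0}\to 0
\]
and split $\End E=\End^0E\oplus\mathcal O_X$. With $H^2(X,\End^0E)=0$, the long exact sequence gives
\[
H^1(T_X^{1,0})\xrightarrow{\ \delta\ }H^2(\End E)\simeq H^2(\mathcal O_X)\to H^2(A(E))\to H^2(T_X^{1,0})\xrightarrow{\ \delta\ }H^3(\End E)\simeq H^3(\End^0E)\oplus H^3(\mathcal O_X).
\]
The connecting map $\delta$ is contraction with $\mathrm{At}(E)$. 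Its $\mathcal O_X$-component is contraction with $c_1(\det E)$, that is, $\mathrm{tr}\,\mathrm{At}(E)\in H^{1,1}$. Since $T_X^{1,0}=\mathcal O_X^3$, we have $H^q(T_X^{1,0})=H^{0,q}\otimes\mathbb C^3$. So $H^2(A(E))=0$ reduces to two conditions:
\begin{itemize}
\item the map $H^{0,1}\otimes\mathfrak g\to H^{0,2}$ given by contracting with $\mathrm{tr}\,\mathrm{At}(E)$ is surjective;
\item the map $H^{0,2}\otimes\mathfrak g\to H^{0,3}\oplus H^3(\End^0E)$ is injective.
\end{itemize}
This is where I expect the main obstacle, since $E$ is arbitrary. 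Moreover, for $E$ trivial one would get $H^2(A(E))\supseteq H^2(\mathcal O)\ne0$, which suggests that the example must have $H^{0,2}(X)=0$ and $H^{0,1}$-classes killed appropriately. I would therefore look for $X$ with $H^2(X,\mathcal O_X)=0$ and $H^2(X,\mathcal O_X)\otimes\mathfrak g=H^2(T_X)=0$. Both vanish together because $T_X^{1,0}$ is trivial. With these, $H^2(\End E)=H^2(\End^0E)\oplus H^2(\mathcal O_X)=0$ and $H^2(T_X)=0$, so $H^2(A(E))=0$ directly, with no analysis of $E$ at all.

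So the real search is for a parallelizable (or merely $K$-trivial with $T_X$ a sum of copies of $\mathcal O_X$) threefold with $h^{0,2}_{\bar\partial}=0$, failing $E_1$-degeneration, and satisfying the three weak conditions. The natural candidate is a complex parallelizable solvmanifold such as Nakamura's, where $h^{0,2}$ can vanish for suitable lattices. The hardest step will be exhibiting such an $X$ and verifying $h^{0,2}=0$ together with $\mathbb B^{2,2}$ via invariant (or Kasuya's modified invariant) complexes.
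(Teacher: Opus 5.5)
\textbf{Gap: no example is produced, and the proposed search class contains none.} Your treatment of (4) is correct and is exactly the paper's. Because $T_X^{1,0}\simeq\mathcal O_X^{3}$, the case $E=\mathcal O_X$ forces $H^2(X,\mathcal O_X)=0$. Conversely, $H^2(X,\mathcal O_X)=0$ kills both $H^2(X,T_X^{1,0})$ and $H^2(X,\End E)$, hence $H^2(X,A(E))$. But the proposition is an existence statement, and you never exhibit $X$.

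The class you propose to search cannot contain an example. A threefold with trivial tangent bundle is $\Gamma\backslash G$ for a $3$-dimensional complex Lie group $G$ (Wang), and a $3$-dimensional complex Lie algebra is either solvable or $\mathfrak{sl}_2(\mathbb C)$.
\begin{itemize}
\item If $\mathfrak g$ is solvable, then $[\mathfrak g,\mathfrak g]\neq\mathfrak g$, so $H^1(\mathfrak g)\neq0$.
\item Since $G$ has a lattice, $\mathfrak g$ is unimodular, and Poincar\'e duality for Lie algebra cohomology gives $H^2(\mathfrak g)\simeq H^1(\mathfrak g)^*\neq0$.
\item The complex of invariant $(0,\bullet)$-forms with $\bar\partial$ is the conjugate of the Chevalley--Eilenberg complex of $\mathfrak g$. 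By the symmetrization argument (Belgun, Console--Fino), invariant Dolbeault cohomology always injects into $H^{0,\bullet}_{\bar\partial}(X)$.
\end{itemize}
Hence $h^{0,2}(X)\geq1$ for the Iwasawa manifold and for every other complex parallelizable solvmanifold. This includes Nakamura's manifold with \emph{every} lattice, contrary to your expectation. So (4) fails already for $E=\mathcal O_X$, by your own observation.

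\textbf{The missing idea: $G=\mathrm{SL}_2(\mathbb C)$.} This is the only remaining case; $\mathfrak{sl}_2$ is perfect and $H^1(\mathfrak{sl}_2)=H^2(\mathfrak{sl}_2)=0$. The paper takes $X=\Gamma\backslash\mathrm{SL}_2(\mathbb C)$ with a lattice chosen (examples of Ghys and Winkelmann, via Akhiezer's theorem) so that invariant forms compute Dolbeault cohomology. Kasuya--Stelzig then transfers this to Bott--Chern and Aeppli cohomology.

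The choice of $\Gamma$ is essential. If $b_1(\Gamma)>0$, then $H^1(\Gamma,\mathbb C)=H^1(X,\mathbb C)$ injects into $H^{0,1}(X)$, since there are no nonzero closed holomorphic $1$-forms. The invariant group, by contrast, vanishes.

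With $\partial\omega_1=\omega_2\wedge\omega_3$ and its cyclic permutations, $\partial$ maps invariant holomorphic $1$-forms isomorphically onto invariant holomorphic $2$-forms and kills the latter. So the invariant $\partial$-cohomology vanishes in degrees $1$ and $2$. This yields:
\begin{itemize}
\item $H^{3,1}_{\bar\partial}(X)=H^{3,2}_{\bar\partial}(X)=0$ and $H^{2,2}_{\mathrm{BC}}(X)=0$, so the three weak conditions hold trivially;
\item $H^2(X,\mathcal O_X)=0$, which gives (4);
\item $d_1[\omega_1]=[\omega_2\wedge\omega_3]\neq0$, which gives (3).
\end{itemize}
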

	
	\begin{proof}
		Consider the connected complex Lie group
		\[
		G=\mathrm{SL}_2(\mathbb C).
		\]
		By examples due to Ghys \cite{Ghy95} and Winkelmann \cite{Win95},
		recalled in \cite[Introduction]{KS23}, one may choose a cocompact
		lattice $\Gamma\subset G$ such that
		\[
		X:=\Gamma\backslash G
		\]
		is a page-$1$-$\partial\bar\partial$-manifold; see
		\cite[Definition  2.10]{PSU24} for the terminology.
		
		Choose a basis of left-invariant holomorphic one-forms
		$\omega_1,\omega_2,\omega_3$ on $X$ satisfying
		\begin{equation}\label{eq:sl2-structure}
			\partial\omega_1
			=
			\omega_2\wedge\omega_3,
			\qquad
			\partial\omega_2
			=
			\omega_3\wedge\omega_1,
			\qquad
			\partial\omega_3
			=
			\omega_1\wedge\omega_2.
		\end{equation}
		
		\begin{enumerate}[{(1)}]
			\item
			\textit{First, $X$ is $K$-trivial Calabi--Yau.}
			Indeed, the form
			\begin{equation*}
				\Omega_X
				:=
				\omega_1\wedge\omega_2\wedge\omega_3
			\end{equation*}
			is nowhere vanishing. Hence $K_X\simeq\mathcal O_X$.
			
			\medskip
			\item
			\textit{We next verify the three weak
				$\partial\bar\partial$-conditions.}
			Put
			\begin{equation*}
				\mathrm P_X^\bullet
				:=
				\Lambda^\bullet
				\langle\omega_1,\omega_2,\omega_3\rangle
			\end{equation*}
			and consider the double complex of left-invariant forms
			\begin{equation*}
				\mathcal I_X^{p,q}
				:=
				\mathrm P_X^p
				\otimes
				\overline{\mathrm P_X^q}.
			\end{equation*}
			By Akhiezer's theorem \cite{Akh97} and the proof of
			\cite[Corollary  13]{KS23}, the inclusion
			$	\mathcal I_X^{\bullet,\bullet}
			\hookrightarrow
			A^{\bullet,\bullet}(X)
			$
			induces isomorphisms in Dolbeault cohomology. (We remark that the choice of $\Gamma$ is essential here: for an arbitrary lattice,
			left-invariant forms need not compute the Dolbeault cohomology.)  By complex
			conjugation, it also induces isomorphisms in
			$\partial$-cohomology.   \cite[Corollary  5]{KS23} then
			implies that the same inclusion induces isomorphisms in
			Bott--Chern and Aeppli cohomology. Therefore the natural maps
			used to verify $\mathbb S^{p,q}$ and $\mathbb B^{p,q}$ may
			be computed on $\mathcal I_X^{\bullet,\bullet}$.
			
			The structure equations show that
			\begin{equation*}
				\partial:
				\mathrm P_X^1
				\longrightarrow
				\mathrm P_X^2
			\end{equation*}
			is an isomorphism and that $\partial\mathrm P_X^2=0$. Indeed,
			\begin{equation*}
				\partial\mathrm P_X^1
				=
				\left\langle
				\omega_2\wedge\omega_3,\,
				\omega_3\wedge\omega_1,\,
				\omega_1\wedge\omega_2
				\right\rangle
				=
				\mathrm P_X^2,
			\end{equation*}
			while \eqref{eq:sl2-structure} directly gives
			$\partial\mathrm P_X^2=0$. Consequently,
			\begin{equation}\label{eq:sl2-small-cohomology}
				H^1(\mathrm P_X^\bullet,\partial)=0,
				\quad
				H^2(\mathrm P_X^\bullet,\partial)=0,
				\quad
				H^3(\mathrm P_X^\bullet,\partial)
				=
				\mathbb C[\Omega_X].
			\end{equation}
			
			It follows from \eqref{eq:sl2-small-cohomology} and complex
			conjugation that
			\begin{equation*}
				H_{\bar\partial}^{3,1}(X)=0,
				\quad
				H_{\bar\partial}^{3,2}(X)=0.
			\end{equation*}
			Thus
			$\iota_{\bar\partial,\mathrm A}^{3,1}$ and
			$\iota_{\bar\partial,\mathrm A}^{3,2}$ are injective, and
			hence $Y$ satisfies $\mathbb S^{3,1}$ and
			$\mathbb S^{3,2}$.
			
			It remains to check $\mathbb B^{2,2}$. Since
			$\partial\mathrm P_X^2=0$ and
			$\bar\partial\overline{\mathrm P_X^2}=0$, every element of
			$\mathrm P_X^2\otimes\overline{\mathrm P_X^2}$ is both
			$\partial$-closed and $\bar\partial$-closed. Moreover,
			\begin{equation*}
				\partial\mathrm P_X^1=\mathrm P_X^2,
				\quad
				\bar\partial\overline{\mathrm P_X^1}
				=
				\overline{\mathrm P_X^2}.
			\end{equation*}
			We therefore obtain
			\begin{align*}
				H_{\mathrm{BC}}^{2,2}(X)
				\simeq
				H_{\mathrm{BC}}^{2,2}(\mathcal I_X)=
				\frac{
					\mathrm P_X^2
					\otimes
					\overline{\mathrm P_X^2}
				}{
					\partial\mathrm P_X^1
					\otimes
					\bar\partial\overline{\mathrm P_X^1}
				}
				=0.
			\end{align*}
			Hence $\iota_{\mathrm{BC},\partial}^{2,2}$ is injective,
			and $Y$ satisfies
			$
			\mathbb B^{2,2}.$

			We shall also need two further injectivities in the next
			example. Since $\partial\mathrm P_X^2=0$, for every $q$ the
			map
			\begin{equation*}
				\iota_{\mathrm{BC},\partial}^{3,q}:
				H_{\mathrm{BC}}^{3,q}(X)
				\longrightarrow
				H_{\partial}^{3,q}(X)
			\end{equation*}
			is identified with the inclusion
			\begin{equation*}
				\mathrm P_X^3
				\otimes
				\ker\left(
				\bar\partial:
				\overline{\mathrm P_X^q}
				\longrightarrow
				\overline{\mathrm P_X^{q+1}}
				\right)
				\hookrightarrow
				\mathrm P_X^3
				\otimes
				\overline{\mathrm P_X^q}.
			\end{equation*}
			In particular, $X$ also satisfies
			$
			\mathbb B^{3,0}
			$ and $
			\mathbb B^{3,1}.
			$

					\medskip
			\item
			\textit{We now show that \(
				E_1^{\bullet,\bullet}(X)
				\not\simeq
				E_\infty^{\bullet,\bullet}(X)
				\).}
			Notice that
			\begin{equation*}
				\bar\partial\omega_1=0,
				\qquad
				d_1[\omega_1]
				=
				[\partial\omega_1]
				=
				[\omega_2\wedge\omega_3]
				\neq0.
			\end{equation*}
			The last class is nonzero because a nonzero form of type
			$(2,0)$ cannot be $\bar\partial$-exact. Thus the Fr\"olicher
			spectral sequence of $X$ does not degenerate at $E_1$.
			
					\medskip
			\item
			\textit{We finally verify \ref{4444}.}  By
			\eqref{eq:sl2-small-cohomology} and complex conjugation, 
			$H^2(X,\mathcal O_X)=0. $  Therefore
			$H^2(X,T_X^{1,0})=0$
			since $X$ is complex parallelisable. 
			Let $E\rightarrow X$ be a holomorphic vector bundle satisfying
			$H^2(X,\operatorname{End}^{0}E)=0$. By \eqref{direct sum}, 
			$H^2(X,\operatorname{End}E)=0.$
			The Atiyah sequence
			\begin{equation*}
				0\longrightarrow
				\operatorname{End}E
				\longrightarrow
				A(E)
				\longrightarrow
				T_X^{1,0}
				\longrightarrow0
			\end{equation*}
			then gives
			\begin{equation*}
				H^2(X,A(E))=0.
			\end{equation*}
		\end{enumerate}
	\end{proof}

	\begin{proposition}\label{prop:nonzero-obstruction-example}
		There exist a compact complex sixfold $X$ and a simple
		holomorphic vector bundle $E$ of rank $2$ over $X$ such that:
		\begin{enumerate}[{(1)}]
			\item $K_X\simeq\mathcal O_X$;
			
			\item $X$ satisfies $\mathbb S^{6,1}$,
			$\mathbb S^{6,2}$ and $\mathbb B^{5,2}$;
			
			\item the Fr\"olicher spectral sequence of $X$ does not
			degenerate at $E_1$; in particular, $X$ does not satisfy
			the full $\partial\bar\partial$-lemma;
			
			\item the bundle $E$ satisfies
			\begin{equation*}
				H^2(X,\operatorname{End}^{0}E)=0,
				\quad
				H^2(X,\operatorname{End}E)\simeq\mathbb C,
				\quad
				H^2(X,A(E))\simeq\mathbb C^2.
			\end{equation*}
		\end{enumerate}
		Moreover, the forgetful morphism
		$\operatorname{Def}(X,E)\longrightarrow\operatorname{Def}(X)$
		is not smooth.
	\end{proposition}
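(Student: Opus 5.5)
The plan is to take a product
\[
X:=Y\times Z,\qquad Y=\Gamma\backslash \mathrm{SL}_2(\mathbb C),\qquad Z=S\times C,
\]
where $Y$ is the threefold of Proposition~\ref{prop:weak-ddbar-threefold}, $S$ is a K3 surface and $C$ is an elliptic curve. Then $K_X\simeq\mathcal O_X$, since each factor is $K$-trivial. The Fr\"olicher spectral sequence of $X$ does not degenerate at $E_1$: the class $[\omega_1]$ pulled back from $Y$ still satisfies $d_1[\omega_1]=[\omega_2\wedge\omega_3]\neq0$, by the same type argument as in Proposition~\ref{prop:weak-ddbar-threefold}.

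For the bundle, I choose a spherical (rigid simple) rank-two bundle $G$ on $S$, for instance a Mukai bundle with $v(G)^2=-2$. Then $H^0(S,\End^0G)=H^1(S,\End^0G)=0$, and $H^2(S,\End^0G)=0$ by Serre duality. Set $E:=\mathrm{pr}_S^*G$. The K\"unneth formula, together with $H^{0,0}(Y)=H^{0,3}(Y)=\mathbb C$ and $H^{0,1}(Y)=H^{0,2}(Y)=0$ from \eqref{eq:sl2-small-cohomology}, and the structure of $H^{0,\bullet}(C)$, gives:
\begin{enumerate}[(i)]
\item $H^0(X,\End^0E)=0$, so $E$ is simple;
\item $H^2(X,\End^0E)=0$, because every contributing term involves $H^{\le2}(S,\End^0G)=0$;
\item $H^2(X,\mathcal O_X)\simeq H^{0,2}(S)\simeq\mathbb C$, and hence $H^2(X,\End E)\simeq\mathbb C$ by \eqref{direct sum}.
\end{enumerate}

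For $H^2(X,A(E))$ I would use the long exact sequence of the Atiyah sequence:
\[
H^1(T_X)\xrightarrow{\ \cup\mathrm{At}(E)\ }H^2(\End E)\to H^2(A(E))\to H^2(T_X)\xrightarrow{\ \cup\mathrm{At}(E)\ }H^3(\End E).
\]
Here $T_X=\mathcal O_X^{3}\oplus \mathrm{pr}^*T_S\oplus\mathrm{pr}^*T_C$, because $Y$ is parallelisable, so each $H^q(T_X)$ is computable by K\"unneth. The trace part of $\mathrm{At}(E)$ is $c_1(G)$, pulled back from $S$. The map $H^1(T_X)\to H^2(\End E)\simeq H^{0,2}(S)$ is then contraction with $c_1(G)$ on the summand $H^1(S,T_S)$. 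This map is nonzero as soon as $c_1(G)\neq0$, because some first-order deformation of $S$ destroys the $(1,1)$-type of $c_1(G)$. This nonvanishing is exactly what shows that the forgetful morphism $\operatorname{Def}(X,E)\to\operatorname{Def}(X)$ is not smooth: the corresponding first-order deformation of $X$ admits no lift of $E$. The step I expect to be fussiest is the bookkeeping of $H^2(T_X)$ and of the second connecting map. If the count does not come out to exactly $\mathbb C^2$, I would adjust the factors, for example replace $C$ or vary $G$ so that $c_1(G)$ has a prescribed Picard rank, until $\dim H^2(X,A(E))=2$.

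The main obstacle is proving that $X$ satisfies $\mathbb S^{6,1}$, $\mathbb S^{6,2}$ and $\mathbb B^{5,2}$. I would use the characterizations \eqref{eq:characterization-S} and \eqref{eq:characterization-B}. For $Y$, I take the finite-dimensional invariant model $\mathcal I_Y^{\bullet,\bullet}$ from Proposition~\ref{prop:weak-ddbar-threefold}. For $Z$, I take its harmonic forms, which form a model with zero differentials since $Z$ is K\"ahler. I expect the tensor product of these two models to compute the Dolbeault, $\partial$, Bott--Chern and Aeppli cohomologies of $X$, by the K\"unneth arguments for double complexes of the type used in \cite{KS23}.

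On this model the maps $\iota^{6,1}_{\bar\partial,\mathrm A}$, $\iota^{6,2}_{\bar\partial,\mathrm A}$ and $\iota^{5,2}_{\mathrm{BC},\partial}$ split into sums of maps of the form $\iota_Y^{p,q}\otimes\mathrm{id}_{H^{p',q'}(Z)}$, because every class on $Z$ is $\partial\bar\partial$-harmonic. So injectivity on $X$ reduces to injectivity for $Y$ in the relevant bidegrees:
\begin{enumerate}[(i)]
\item $\mathbb S^{3,q}$ for $q\le2$, which holds since $H^{3,1}_{\bar\partial}(Y)=H^{3,2}_{\bar\partial}(Y)=0$ and since $\iota^{3,0}_{\bar\partial,\mathrm A}$ is the identity on $\mathrm P^3_Y$;
\item $\mathbb B^{2,2}$, $\mathbb B^{3,0}$ and $\mathbb B^{3,1}$, which were recorded in Proposition~\ref{prop:weak-ddbar-threefold};
\item the analogous statements in the lower bidegrees, checked directly on $\mathcal I_Y$.
\end{enumerate}
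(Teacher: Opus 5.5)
You got several steps right: $K_X\simeq\mathcal O_X$, non-degeneration via $d_1[\omega_1]$, simplicity of $E$, $H^\bullet(X,\End^0E)=0$, $H^2(X,\End E)\simeq\mathbb C$, non-smoothness of the forgetful morphism, and $\mathbb S^{6,1}$, $\mathbb S^{6,2}$ (only $\iota^{3,q}_{\bar\partial,\mathrm A}(Y)$ with $q\le2$ enter). However, for $X=Y\times S\times C$ two required properties genuinely fail. Both failures come from the factor $Y$, so replacing $C$ or varying $G$ cannot repair them.

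\emph{First, $\mathbb B^{5,2}$ fails.} Your K\"unneth splitting of $\iota^{5,2}_{\mathrm{BC},\partial}(X)$ contains the summand $\iota^{2,0}_{\mathrm{BC},\partial}(Y)\otimes\mathrm{id}_{H^{3,2}(S\times C)}$, where $H^{3,2}(S\times C)\simeq H^{2,2}(S)\otimes H^{1,0}(C)\simeq\mathbb C$. But $Y$ does not satisfy $\mathbb B^{2,0}$: the form $\omega_2\wedge\omega_3=\partial\omega_1$ is $\partial$-exact and $\bar\partial$-closed, yet no nonzero $(2,0)$-form is $\partial\bar\partial$-exact. Concretely, put $\alpha:=\omega_1\wedge\Omega_S\wedge\overline{\Omega_S}\wedge dz\in A^{4,2}(X)$. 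Then $\bar\partial\alpha=0$ and $\eta:=\partial\alpha=\omega_2\wedge\omega_3\wedge\Omega_S\wedge\overline{\Omega_S}\wedge dz$. The $(1,4)$-form $\lambda:=\omega_1\wedge\bar\omega_1\wedge\bar\omega_2\wedge\bar\omega_3\wedge d\bar z$ is $\bar\partial$-closed and satisfies $\int_X\eta\wedge\lambda\neq0$. By Stokes, $\eta\neq\partial\bar\partial\gamma$ for every $\gamma$, contradicting $\mathbb B^{5,2}$. Your item (iii), ``lower bidegrees checked directly'', is exactly where this breaks. It is also the same relation $\partial\omega_1=\omega_2\wedge\omega_3$ that you use for non-degeneration. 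The failure is not specific to $S\times C$. For any $K$-trivial K\"ahler threefold $Z$, one has $\chi(\mathcal O_Z)=0$, so $h^{0,1}(Z)=h^{0,2}(Z)$. Hence the condition $h^{0,2}(Z)=h^{0,2}(X)=1$, which you need for $H^2(X,\End E)\simeq\mathbb C$, forces $h^{3,2}(Z)=h^{0,1}(Z)=1$.

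\emph{Second, $H^2(X,A(E))$ is far from $\mathbb C^2$.} We have $T_X\simeq\mathcal O_X^{\oplus4}\oplus\mathrm{pr}_S^*T_S$, with three trivial summands from $Y$ and one from $C$. K\"unneth then gives $H^2(X,T_X)\simeq H^2(X,\mathcal O_X)^{\oplus4}\oplus H^1(S,T_S)\otimes H^{0,1}(C)\simeq\mathbb C^{24}$. On the other hand, $H^3(X,\End E)\simeq H^3(X,\mathcal O_X)\simeq H^{0,3}(Y)\oplus H^{0,2}(S)\otimes H^{0,1}(C)\simeq\mathbb C^2$. Since $H^2(X,A(E))$ surjects onto the kernel of $H^2(X,T_X)\to H^3(X,\End E)$, it has dimension at least $22$. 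In fact it is $23$, because your first connecting map is surjective and the second has rank one. Again $Y$ is the culprit. $\mathrm{At}(E)$ is pulled back from the other factor, so it contracts to zero against the three invariant vector fields of $Y$. Thus $H^2(X,\mathcal O_X)\otimes\mathbb C^3$ always lies in that kernel, and $\dim H^2(X,A(E))\geq3$ for every such product.

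The missing idea is to kill these classes by a free finite quotient. The paper takes $X=(N\times B^3)/\langle g\rangle$, where $N$ is the Heisenberg nilmanifold over $\mathbb Z[e^{\pi i/3}]$ and $g$ has order $6$ with prescribed eigenvalues on the invariant coframe. The invariant model then leaves only $H^2(X,\mathcal O_X)=\mathbb C[\bar\alpha_{56}]$, $H^3(X,\mathcal O_X)=0$ and $H^2(X,T_X)\simeq\mathbb C^2$. It also makes $\mathcal I^{6,1}$, $\mathcal I^{6,2}$ and $\mathcal I^{5,2}$ small enough to verify the three weak conditions by hand. Finally, $E$ is pulled back from a stable rank-two, degree-one bundle on the elliptic factor on which $g$ acts trivially.
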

	
	\begin{proof}
		Let
		\begin{equation*}
			\zeta:=e^{\pi i/3},
			\quad
			R:=\mathbb Z[\zeta],
			\quad
			B:=\mathbb C/R.
		\end{equation*}
		Consider the complex Heisenberg group $H=\mathbb C^3$, with
		multiplication
		\begin{equation*}
			(z_1,z_2,z_3)(w_1,w_2,w_3)
			=
			(z_1+w_1,z_2+w_2,z_3+w_3+z_1w_2),
		\end{equation*}
		and its cocompact lattice $\Gamma:=R^3$.
		Set $N:=\Gamma\backslash H$.
		The automorphism
		\begin{equation*}
			\phi(z_1,z_2,z_3)
			=
			(\zeta z_1,\zeta z_2,\zeta^2z_3)
		\end{equation*}
		preserves $\Gamma$ and therefore descends to $N$.

				\medskip
		On $\widetilde X:=N\times B^3$, define
		\begin{equation*}
			g(z,x,y,t)
			=
			\left(\phi(z),\zeta^2x,y+\frac16,t\right).
		\end{equation*}
		This generates a free action of a cyclic group of order $6$:
		for $1\leq k\leq5$, the map $g^k$ translates the
		$y$-coordinate by the nonzero element $\frac k6$ of $B$.
		Thus
		\begin{equation*}
			X:=\widetilde X/\langle g\rangle
		\end{equation*}
		is a compact complex sixfold.
		Since $g$ acts trivially on the last factor,
		$X=Z\times B$, where $Z$ is the quotient of $N\times B^2$
		by the induced action. Denote the projection to the last
		factor by $$f:X\longrightarrow B.$$
		
		On $\widetilde X$, consider the holomorphic coframe
		\begin{equation*}
			\alpha_1=dz_1,\quad
			\alpha_2=dz_2,\quad
			\alpha_3=dz_3-z_1dz_2,\quad
			\alpha_4=dx,\quad
			\alpha_5=dy,\quad
			\alpha_6=dt.
		\end{equation*}
		Writing
		$\alpha_{j_1\cdots j_k}
		:=\alpha_{j_1}\wedge\cdots\wedge\alpha_{j_k}$,
		we have
		\begin{equation}\label{eq:nilpotent-structure}
			\partial\alpha_3=-\alpha_{12},
			\quad
			\partial\alpha_j=0
			\quad(j\neq3),
		\end{equation}
		and
		\begin{equation}\label{eq:cyclic-action}
			g^*(\alpha_1,\ldots,\alpha_6)
			=
			(\zeta\alpha_1,\zeta\alpha_2,
			\zeta^2\alpha_3,\zeta^2\alpha_4,
			\alpha_5,\alpha_6).
		\end{equation}
		We identify $g$-invariant forms on $\widetilde X$ with
		their descended forms on $X$.
		
		\begin{enumerate}[{(1)}]
			\item
			\textit{First, $K_X$ is trivial.}
			The nowhere-vanishing holomorphic form
			\begin{equation*}
				\Omega:=\alpha_{123456}
			\end{equation*}
			is $g$-invariant, since $\zeta^{1+1+2+2}=1$.
			It therefore descends to $X$, proving
			$K_X\simeq\mathcal O_X$.

					\medskip
			\item
			\textit{We next verify the three weak
				$\partial\bar\partial$-conditions.}
			Put
			\begin{equation*}
				\mathrm P^\bullet
				:=
				\Lambda^\bullet
				\langle\alpha_1,\ldots,\alpha_6\rangle,
				\quad
				\mathcal I^{p,q}
				:=
				\left(
				\mathrm P^p\otimes\overline{\mathrm P^q}
				\right)^{\langle g\rangle}.
			\end{equation*}
			By Sakane's theorem \cite[Theorem  1]{Sak76},
			left-invariant forms compute the Dolbeault cohomology
			of $\widetilde X$. Since taking invariants under a
			finite group is exact, the inclusion
			\begin{equation*}
				\mathcal I^{\bullet,\bullet}
				\hookrightarrow
				A^{\bullet,\bullet}(X)
			\end{equation*}
			also induces isomorphisms in Dolbeault cohomology.
			By complex conjugation and \cite[Corollary  5]{KS23},
			it induces isomorphisms in $\partial$-, Bott--Chern
			and Aeppli cohomology as well.
			Thus the natural maps characterizing the weak
			conditions may be computed on
			$\mathcal I^{\bullet,\bullet}$.
			
					\medskip
			By \eqref{eq:cyclic-action},
			\begin{equation*}
				\begin{aligned}
					\mathcal I^{6,1}
					&=
					\left\langle
					\Omega\wedge\bar\alpha_5,\,
					\Omega\wedge\bar\alpha_6
					\right\rangle,
					\\
					\mathcal I^{6,2}
					&=
					\mathbb C\,
					\Omega\wedge\bar\alpha_{56}.
				\end{aligned}
			\end{equation*}
			These forms are $\bar\partial$-closed. Moreover,
			\begin{equation*}
				\partial\mathrm P^5=0,
				\quad
				\bar\partial\mathcal I^{6,0}
				=
				\bar\partial\mathcal I^{6,1}
				=
				0.
			\end{equation*}
			Consequently, for $q=1,2$, both
			$H_{\bar\partial}^{6,q}(X)$ and
			$H_{\mathrm A}^{6,q}(X)$ are identified with
			$\mathcal I^{6,q}$, and the natural map between them
			is the identity. Hence $X$ satisfies
			$\mathbb S^{6,1}$ and $\mathbb S^{6,2}$.
			
					\medskip
			It remains to check $\mathbb B^{5,2}$.
			The same action gives
			\begin{equation*}
				\begin{aligned}
					\mathcal I^{5,2}
					=
					\bigl\langle\,
					&
					\alpha_{12345}\wedge\bar\alpha_{56},\,
					\alpha_{12346}\wedge\bar\alpha_{56},
					\\
					&
					\alpha_{12356}\wedge\bar\alpha_{34},\,
					\alpha_{12456}\wedge\bar\alpha_{34}
					\,\bigr\rangle.
				\end{aligned}
			\end{equation*}
			All four generators are $\partial$-closed, while
			the $\bar\partial$-closed subspace is spanned by
			the first two, since
			$\bar\partial\bar\alpha_{34}=-\bar\alpha_{124}$.
			Furthermore,
			\begin{equation*}
				\partial\mathcal I^{4,2}
				=
				\mathbb C\,
				\alpha_{12456}\wedge\bar\alpha_{34},
			\end{equation*}
			which contains no nonzero $\bar\partial$-closed
			element. Thus, in the invariant double complex
			$\mathcal I^{\bullet,\bullet}$, there are no nonzero
			$\partial\bar\partial$-boundaries in bidegree $(5,2)$.
			We obtain
			\begin{equation*}
				\begin{aligned}
					H_{\mathrm{BC}}^{5,2}(X)
					&=
					\left\langle
					[\alpha_{12345}\wedge\bar\alpha_{56}],
					[\alpha_{12346}\wedge\bar\alpha_{56}]
					\right\rangle,
					\\
					H_\partial^{5,2}(X)
					&=
					\left\langle
					[\alpha_{12345}\wedge\bar\alpha_{56}],
					[\alpha_{12346}\wedge\bar\alpha_{56}],
					[\alpha_{12356}\wedge\bar\alpha_{34}]
					\right\rangle.
				\end{aligned}
			\end{equation*}
			The natural map is the inclusion of the first two
			classes. Therefore $X$ satisfies $\mathbb B^{5,2}$.

					\medskip
			\item
			\textit{We now show that the Fr\"olicher spectral
				sequence does not degenerate at $E_1$.}
			The form $\alpha_3\wedge\bar\alpha_4$ is
			$g$-invariant and $\bar\partial$-closed, and
			\begin{equation*}
				d_1[\alpha_3\wedge\bar\alpha_4]
				=
				-[\alpha_{12}\wedge\bar\alpha_4]
				\neq0.
			\end{equation*}
			Indeed, the last class is already nonzero on
			$\widetilde X$, whose Dolbeault cohomology in this
			bidegree is
			$\mathrm P^2\otimes
			H^1(\overline{\mathrm P^\bullet},\bar\partial)$,
			and $[\bar\alpha_4]\neq0$.
			In particular, $X$ does not satisfy the full
			$\partial\bar\partial$-lemma.

					\medskip
			\item
			\textit{Finally, we construct $E$ and compute its
				obstruction space.}
			Recall that a holomorphic vector bundle $V$ over $B$
			is \textit{stable} if
			$\frac{\deg S}{\operatorname{rank}S}
			<
			\frac{\deg V}{\operatorname{rank}V}$
			for every nonzero proper holomorphic subbundle
			$S\subset V$.
			
			Let $o:=[0]\in B$ be the origin and choose a nonsplit
			extension
			\begin{equation*}
				0
				\longrightarrow
				\mathcal O_B
				\longrightarrow
				F
				\longrightarrow
				\mathcal O_B(o)
				\longrightarrow
				0.
			\end{equation*}
			Such an extension exists since
			$H^1(B,\mathcal O_B(-o))\simeq\mathbb C$.
			Then $F$ has rank $2$ and degree $1$.
			If a line subbundle $S\subset F$ had $\deg S\geq1$,
			the composite $S\to F\to\mathcal O_B(o)$ would be
			nonzero, since
			$\operatorname{Hom}(S,\mathcal O_B)=0$.
			As $\deg\mathcal O_B(o)=1$, this composite would be
			an isomorphism, splitting the extension.
			Thus every line subbundle has degree at most $0$,
			so $F$ is stable and hence simple by
			\cite[Chapter  V, Corollary  7.14]{Kob87}.
			Since $K_B\simeq\mathcal O_B$, Serre duality gives
			\begin{equation*}
				H^0(B,\operatorname{End}^{0}F)
				=
				H^1(B,\operatorname{End}^{0}F)
				=
				0.
			\end{equation*}
			
			Set $$E:=f^*F.$$
			As $X=Z\times B$, the K\"unneth formula yields
			\begin{equation*}
				H^q(X,\operatorname{End}^{0}E)=0
				\quad\text{for every }q.
			\end{equation*}
			In particular, $E$ is simple.

					\medskip
			We record the cohomology groups needed below.
			A direct calculation from
			\eqref{eq:nilpotent-structure} gives
			\begin{equation*}
				\begin{aligned}
					H^2(\mathrm P^\bullet,\partial)
					={}&
					\mathbb C[\alpha_{56}]
					\\
					&\oplus
					\left\langle
					[\alpha_{15}],[\alpha_{16}],
					[\alpha_{25}],[\alpha_{26}]
					\right\rangle
					\\
					&\oplus
					\left\langle
					[\alpha_{45}],[\alpha_{46}]
					\right\rangle
					\\
					&\oplus
					\left\langle
					[\alpha_{13}],[\alpha_{14}],
					[\alpha_{23}],[\alpha_{24}]
					\right\rangle,
				\end{aligned}
			\end{equation*}
			where the four summands have eigenvalues
			$1,\zeta,\zeta^2,\zeta^3$, respectively.
			In degree $3$, all eigenvalues belong to
			$\{\zeta,\zeta^2,\zeta^3,\zeta^4,\zeta^5\}$.
			Taking complex conjugates and invariants therefore gives
			\begin{equation}\label{eq:quotient-O-cohomology}
				H^2(X,\mathcal O_X)
				=
				\mathbb C[\bar\alpha_{56}],
				\quad
				H^3(X,\mathcal O_X)=0.
			\end{equation}
			By \eqref{direct sum}, \eqref{eq:quotient-O-cohomology}
			and the vanishing of $H^q(X,\operatorname{End}^{0}E)$,
			the trace map gives
			\begin{equation*}
				H^2(X,\operatorname{End}E)
				\simeq
				H^2(X,\mathcal O_X)
				\simeq\mathbb C,
				\quad
				H^3(X,\operatorname{End}E)=0.
			\end{equation*}
			
			Let $v_1,\ldots,v_6$ be the holomorphic vector fields
			on $\widetilde X$ dual to
			$\alpha_1,\ldots,\alpha_6$.
			Their eigenvalues under pull-back are
			$\zeta^{-1},\zeta^{-1},
			\zeta^{-2},\zeta^{-2},1,1$.
			The preceding calculation consequently gives
			\begin{equation*}
				H^2(X,T_X^{1,0})
				=
				\left\langle
				[\bar\alpha_{56}\otimes v_5],\,
				[\bar\alpha_{56}\otimes v_6]
				\right\rangle
				\simeq
				\mathbb C^2.
			\end{equation*}
			
			Consider the connecting map 
			\begin{equation}\label{conne}
				H^1(X,T_X^{1,0})
				\xrightarrow{\;\lrcorner\mathrm{At}(E)\;}
				H^2(X,\operatorname{End}E).
			\end{equation}
			Since $E=f^*F$, we have
			$\operatorname{tr}\mathrm{At}(E)
			=f^*\operatorname{tr}\mathrm{At}(F)$.
			Moreover, $\operatorname{tr}\mathrm{At}(F)$ is a
			nonzero scalar multiple of $c_1(F)$, which is nonzero
			because $\deg F=1$.
			Contracting with $v_6$ and wedging with $\bar\alpha_5$
			therefore gives
			\begin{equation*}
				\operatorname{tr}\left(
				[\bar\alpha_5\otimes v_6]
				\lrcorner\mathrm{At}(E)
				\right)
				=
				c[\bar\alpha_{56}],
				\quad c\neq0.
			\end{equation*}
			Thus the connecting map \eqref{conne} is nonzero and hence
			surjective, since its target is one-dimensional.
			Together with $H^3(X,\operatorname{End}E)=0$,
			the long exact sequence associated with the Atiyah
			sequence gives
			\begin{equation*}
				H^2(X,A(E))
				\simeq
				H^2(X,T_X^{1,0})
				\simeq
				\mathbb C^2.
			\end{equation*}
			
					\medskip
			Finally, the same exact sequence shows that the tangent
			map of the forgetful morphism has image
			\begin{equation*}
				\operatorname{im}\left(
				H^1(X,A(E))
				\longrightarrow H^1(X,T_X^{1,0})
				\right)
				=
				\ker\bigl(\lrcorner\mathrm{At}(E)\bigr).
			\end{equation*}
			Since \eqref{conne} is nonzero, this
			tangent map is not surjective.
			Hence the forgetful morphism $\operatorname{Def}(X,E)\longrightarrow\operatorname{Def}(X)$  is not smooth by
			\cite[Corollary  3.6.10]{Man22}.
		\end{enumerate}
	\end{proof}
	
	\begin{remark}\label{rem:torsion-canonical}
		It is natural to ask whether Theorem  \ref{main thm2} remains
		true when $K_X$ is merely \emph{torsion}. A natural strategy,
		whether one follows our Tian--Todorov approach or the
		$T^1$-lifting approach of Iacono--Manetti, is to pass to the
		canonical cyclic \'etale cover
		$\pi:\widetilde X\rightarrow X$ as in \eqref{diagram},
		apply Theorem  \ref{main thm2} to
		$(\widetilde X,\pi^*(\det E))$, and then use
		Lemma  \ref{wells} to descend the vanishing of the obstruction
		classes.
		
				\medskip
		However, even when $X$ is a $\p\bp$-manifold, it is not clear
		whether $\widetilde X$ is again a $\p\bp$-manifold. Notice
		that the converse direction holds in much greater
		generality. Indeed, for any surjective morphism
		$f:Y\rightarrow X$ between two equidimensional compact
		complex manifolds, if $Y$ is a $\p\bp$-manifold, then so is
		$X$; see, e.g.,
		\cite[Theorem  1.1 \& Example  3.1(3)]{Men22}.
		
		This leads to the following:
		\begin{question}
			Let $X$ be an $n$-dimensional compact complex manifold
			with torsion canonical bundle. If $X$ is a
			$\p\bp$-manifold
			\emph{(resp.\ if $X$ satisfies $\mathbb S^{n,1}$,
				$\mathbb S^{n,2}$ and $\mathbb B^{n-1,2}$)},
			must its canonical cyclic \'etale cover $\widetilde X$
			again be a $\p\bp$-manifold
			\emph{(resp.\ satisfy $\mathbb S^{n,1}$,
				$\mathbb S^{n,2}$ and $\mathbb B^{n-1,2}$)},
			or can one construct a counterexample to either
			assertion?
		\end{question}
	\end{remark}

	\section{Simultaneous deformations in Thomas's example}
	
	In this section, we recall Thomas's example \cite{Tho99}
	and Felten's use of it \cite{Fel25}.
	We then show that the deformation functor $\operatorname{Def}(X,E)$ is smooth,
	although the forgetful map to $\operatorname{Def}(X)$ is not.
	
	\medskip
	\medskip
	Let
	$$
	P:=\mathbb P^2_{[x:y:z]}\times\mathbb P^2_{[u:v:w]}.
	$$
	Following \cite[\S 2]{Tho99}, choose a sufficiently small
	general polynomial $p$ of bidegree $(3,3)$ vanishing
	scheme-theoretically on 
	$$
	f:=\{u=v=0,\ x^2y=0\}\subset P.
	$$
	Set
	$$
	F=x^2yw^3+z^2yvw^2+z^3uw^2+(x+y)^3vw^2+p.
	$$
	Then $X:=\{F=0\}\subset P$ is a smooth threefold containing
	the line
	$$
	C:=\{x=u=v=0\}\cong\mathbb P^1.
	$$
	The choice of $p$ preserves the fibre $f$, in which $C$
	occurs with multiplicity two.

	\medskip
	We construct the bundle following \cite[\S 3]{Tho99}.
	Starting from $C\subset X$, choose
	$L=\mathcal O_X(-2,2)$.\footnote{Write
		$\mathcal O_P(i,j)
		:=\operatorname{pr}_1^*\mathcal O_{\mathbb P^2}(i)
		\otimes\operatorname{pr}_2^*\mathcal O_{\mathbb P^2}(j)$
		for the two projections of $P$, and set
		$\mathcal O_X(i,j):=\mathcal O_P(i,j)|_X$.}
	Writing $N_{C/X}$ for the normal bundle, Thomas verified
	the two conditions needed for the Serre construction:
	$$
	\det N_{C/X}\cong\mathcal O_C(-2)\cong L|_C,
	\quad
	H^2(X,L^{-1})=0.
	$$
	This gives a rank-two bundle $V$ with $\det V\cong L$
	and a section $s\in H^0(X,V)$ whose zero locus is $C$.
	If $\mathcal I_C$ denotes the ideal sheaf of $C$ in $X$,
	the resulting exact sequence is
	\begin{equation}\label{thomas-serre}
		0\longrightarrow\mathcal O_X
		\xrightarrow{s}V
		\longrightarrow\mathcal I_C\otimes L
		\longrightarrow0.
	\end{equation}
	Set
	\begin{equation}\label{EEE}
		E:=V\otimes\mathcal O_X(1,-1).
	\end{equation}
	Since $V$ has rank two,
	$$
	\det E
	\cong L\otimes\mathcal O_X(2,-2)
	\cong\mathcal O_X.
	$$

	\medskip
	As Yukinobu Toda observed,\footnote{See the Addendum to the revised version of
		\cite{Tho99}, available at
		\href{https://arxiv.org/abs/math/9903034v2}{arXiv:math/9903034v2}.}
	deforming $E$ on the \textit{fixed} threefold $X$ is equivalent
	to deforming $C$ inside $X$:
	\begin{equation}\label{thomas-fixed-correspondence}
		\operatorname{Def}_X(E)\cong\operatorname{Def}_X(C).
	\end{equation}
	Thomas computed the local deformation ring
	and hence the corresponding Kuranishi space:
	$$
	R_0=\mathbb C[\epsilon,\eta]/(\epsilon^2,\eta^2),
	\quad
	\operatorname{Kur}_X(E)\cong\operatorname{Spec}R_0.
	$$
	A first-order deformation sends
	$\epsilon\mapsto at$ and $\eta\mapsto bt$
	in $\mathbb C[t]/(t^2)$.
	Any lift to $\mathbb C[t]/(t^3)$ must send them to
	$at+a_2t^2$ and $bt+b_2t^2$.
	The relations $\epsilon^2=\eta^2=0$ then force
	$a^2=b^2=0$.
	Thus every nontrivial first-order deformation of $E$
	with $X$ fixed is obstructed at second order,
	as stated in \cite[Theorem 1.1]{Tho99}.

	\begin{remark}\label{thomas} By  	\cite[Equation (3.2) with $k=l=0$]{Tho99}  plus the K\"unneth formula
		\begin{equation}\label{1111111}
			H^1(X,\mathcal{O}_X)=H^2(X,\mathcal{O}_X)=0.
		\end{equation}
		Therefore the trace decomposition
		\eqref{direct sum} gives
		$H^2(X,\End^0E)\simeq H^2(X,\End E)\neq0$,
		as the deformations of $E$ with $X$ fixed are obstructed.
	\end{remark}

	\medskip
	\medskip
	\medskip
	In
	\cite[Example 11.14]{Fel25}, Felten considered the trivial first- and second-order deformations:
	$$
	X_1^{\mathrm{triv}}
	:=X\times\operatorname{Spec}\mathbb C[t]/(t^2),
	\quad
	X_2^{\mathrm{triv}}
	:=X\times\operatorname{Spec}\mathbb C[t]/(t^3).
	$$
	Then by \cite[Theorem 1.1]{Tho99} a nontrivial deformation $E_1$ of $E$ from \eqref{EEE} on
	$X_1^{\mathrm{triv}}$ cannot extend to a bundle on
	$X_2^{\mathrm{triv}}$.
	Consequently, the forgetful map
	$$
	\pi:\operatorname{Def}(X,E)\longrightarrow\operatorname{Def}(X)
	$$
	is not smooth.

	\medskip
	\medskip
	For simultaneous deformations, however, the lifted
	threefold is \textit{not prescribed}.
	We will prove that $\operatorname{Def}(X,E)$ is smooth
	by lifting arbitrary deformations of the pair across
	arbitrary Artin extensions 
	(in the example above, we may therefore replace
	$X_2^{\mathrm{triv}}$ by another lift while keeping
	$(X_1^{\mathrm{triv}},E_1)$ unchanged).
	The idea is to extend \eqref{thomas-fixed-correspondence}
	to simultaneous deformations, thereby reducing the problem
	to lifting the curve together with the threefold.
	We then lift the curve in $P$ and correct the bicubic
	equation to contain it without changing the given deformation.
	The Serre construction recovers the lifted bundle.
	
	\begin{theorem}\label{Thomas}
		The deformation functor $\operatorname{Def}(X,E)$ is smooth.
	\end{theorem}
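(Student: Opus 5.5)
We will show that for every small extension $0\to(\tau)\to A'\to A\to0$ of local Artin $\mathbb C$-algebras, every deformation $(X_A,E_A)$ of $(X,E)$ over $A$ lifts to $A'$. The plan has three steps: (a) translate pairs $(X_A,E_A)$ into pairs $(X_A,C_A)$ of a deformed threefold and a deformed curve; (b) lift $(X_A,C_A)$ inside the trivial deformation $P_{A'}:=P\times\operatorname{Spec}A'$; (c) run the Serre construction in reverse to recover $E_{A'}$.

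\textbf{Step (a): from the bundle to the curve.} Since $H^1(X,\mathcal O_X)=H^2(X,\mathcal O_X)=0$ by \eqref{1111111}, the line bundles $\mathcal O_X(1,-1)$ and $L$ extend uniquely to every $X_A$. Since $P$ is rigid with $H^1(P,\mathcal O_P(3,3))$ controlling nothing extra, every deformation $X_A$ is a bicubic hypersurface $\{F_A=0\}\subset P_A$; this follows from the standard comparison of deformations of $X$ and of the hypersurface, using $H^1(X,\mathcal O_X(3,3))=0$ and $H^1(P,T_P)=0$. Set $V_A:=E_A\otimes\mathcal O_{X_A}(-1,1)$. We then need:
\begin{enumerate}[(i)]
\item the section $s$ lifts to $s_A\in H^0(V_A)$;
\item the zero scheme $C_A$ of $s_A$ is flat over $A$;
\item the Serre correspondence \eqref{thomas-serre} holds relatively.
\end{enumerate}
Obstructions to lifting $s$ lie in $H^1(X,V)$. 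From \eqref{thomas-serre}, $H^1(X,V)$ sits between $H^1(\mathcal O_X)=0$ and $H^1(\mathcal I_C\otimes L)$. We will compute the latter from $0\to\mathcal I_C L\to L\to L|_C\to0$, using $L|_C=\mathcal O_C(-2)$ and the cohomology of $L=\mathcal O_X(-2,2)$ via Künneth as in Thomas's (3.2). We expect $H^0(V)=\mathbb C s$ and the vanishing needed, so that the choice of $s_A$ is unique up to units. Conversely, given $(X_A,C_A)$ with $C_A$ flat and lci, the relative Serre construction applies. It needs $\det N_{C_A/X_A}\cong L_A|_{C_A}$, which is automatic because $\operatorname{Pic}$ of a deformed $\mathbb P^1$ is rigid, and it needs $H^2(X,L^{-1})=0$, which controls both existence and uniqueness of the extension class. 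The outcome is a bijection, compatible with base change, between $\operatorname{Def}(X,E)$ and $\operatorname{Def}(X,C)$; this extends \eqref{thomas-fixed-correspondence}.

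\textbf{Step (b): lifting the curve and the threefold.} Given $C_A\subset X_A=\{F_A=0\}\subset P_A$, we first lift $C_A$ to a curve $C_{A'}\subset P_{A'}$. This is unobstructed because $N_{C/P}=\mathcal O(1)\oplus\mathcal O\oplus\mathcal O$ (the line lies in a $\mathbb P^2$-fibre) has $H^1=0$. Next we choose an arbitrary lift $\widetilde F$ of $F_A$ to a section of $\mathcal O_{P_{A'}}(3,3)$. Its restriction $\widetilde F|_{C_{A'}}$ vanishes modulo $\tau$, so it defines an element
\[
\tau\cdot\sigma,\qquad \sigma\in H^0(C,\mathcal O_C(3,3)|_C)=H^0(\mathbb P^1,\mathcal O(3)).
\]
The restriction $H^0(P,\mathcal O_P(3,3))\to H^0(C,\mathcal O_C(3))$ is surjective, since $C$ is a linear subspace. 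Hence we may pick $G$ restricting to $\sigma$ and replace $\widetilde F$ by $F_{A'}:=\widetilde F-\tau G$. Then $F_{A'}$ vanishes on $C_{A'}$ and still reduces to $F_A$. The hypersurface $X_{A'}=\{F_{A'}=0\}$ is flat, lifts $X_A$, and contains $C_{A'}$ flatly.

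\textbf{Step (c): conclusion.} Applying the correspondence of Step (a) to $(X_{A'},C_{A'})$ produces $E_{A'}$ lifting $E_A$. This proves smoothness of $\operatorname{Def}(X,E)$.

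The main obstacle is Step (a). Injectivity of the correspondence is not needed, only the following: $E_A$ must arise by Serre's construction from some $C_A$, and conversely $(X_{A'},C_{A'})$ must produce a bundle restricting to the given $E_A$. For the first point, the key inputs are lifting $s$, which needs $H^1(X,V)=0$, and the codimension-two zero locus remaining flat, which holds because the Koszul complex deforms. For the second point, the key input is uniqueness of the relative extension class over $A$, which follows from $H^1(X,\mathcal{E}xt$-type terms$)$, namely $H^1(X,L^{-1})=0$. So the first thing we will check is the vanishing $H^1(X,V)=0$ via Künneth on $P$ and the two exact sequences above.
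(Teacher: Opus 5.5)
Your proposal is correct and follows essentially the paper's proof: you extend the bundle--curve correspondence to joint deformations using Thomas's vanishings and the rigidity of $\operatorname{Pic}$, lift the curve inside the trivial deformation $P_{A'}$, and subtract from a lift of the bicubic a term in the kernel so that it vanishes on the lifted curve. Your use of $H^1(C,N_{C/P})=0$ and of the surjectivity of $H^0(P,\mathcal O_P(3,3))\to H^0(C,\mathcal O_{\mathbb P^1}(3))$ is an invariant version of the paper's explicit parametrization $\{x=ay+bz,\ u=cw,\ v=dw\}$ and its correction term $w^3Q(y,z)$.

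Two small fixes are needed. First, the fact that every $X_A$ is a bicubic hypersurface in $P_A$ follows from $H^1(X,T_P|_X)=0$, or, as in the paper, from lifting the coordinate sections via $H^1(X,\mathcal O_X(1,0))=H^1(X,\mathcal O_X(0,1))=0$. It does not follow from $H^1(X,\mathcal O_X(3,3))=0$. Second, in the relative Serre construction, existence is governed by $H^2(X,L^{-1})=0$ and uniqueness by $H^1(X,L^{-1})=0$; you state this correctly only in your last paragraph.
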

	\begin{proof}
		Let $R'\twoheadrightarrow R$ be a surjection of local Artin
		$\mathbb C$-algebras with kernel $J$, and let $(X_R,E_R)$
		be a given deformation of $(X,E)$ over $R$.
		We will construct a lift of this pair over $R'$.
		The proof is divided into two steps.
		
		\setcounter{step}{0}
		
		\begin{step}{Extend the bundle--curve correspondence.}
			\label{step:thomas-curve}
		\end{step}
		
		We first show that \eqref{thomas-fixed-correspondence}
		also holds when $X$ varies:
		\begin{equation}\label{thomas-correspondence}
			\operatorname{Def}(X,E)\cong\operatorname{Def}(X,C).
		\end{equation}

 Thomas proved the following cohomological identities:
		\begin{equation}\label{thomas-vanishing}
			\begin{gathered}
				h^0(X,V)=1,\quad H^1(X,V)=0,\\
				H^1(X,L^{-1})=H^2(X,L^{-1})=0.
			\end{gathered}
		\end{equation}
		
By \eqref{1111111}, the exact sequence in
\cite[Exercise~III. 4.6]{Har77}, applied successively
along small Artin extensions, gives
		\begin{equation}\label{thomas-picard}
			\operatorname{Pic}(X_R)\xrightarrow{\sim}
			\operatorname{Pic}(X).
		\end{equation}
		Thus $\mathcal O_X(-1,1)$ and $L$ extend uniquely up to
		isomorphism to $\mathcal O_{X_R}(-1,1)$ and $L_R$.
		By \eqref{EEE}, set
		$V_R:=E_R\otimes\mathcal O_{X_R}(-1,1)$.
		Since $\det V_R$ restricts to $L$,
		\eqref{thomas-picard} gives $\det V_R\cong L_R$.

		\medskip
		The equality $H^1(X,V)=0$ allows $s$ to extend to $s_R$.
		Together with $H^0(X,V)=\mathbb C s$, this gives
		$$
		H^0(X_R,V_R)=R\,s_R,\quad s_R|_X=s.
		$$
		Any other extension of $s$ is therefore $u s_R$
		for an invertible element $u\in R$.
		Since $Z(u s_R)=Z(s_R)$, the closed subspace
		$C_R:=Z(s_R)$ is independent of this choice.
		Moreover, $s$ cuts out $C$ in codimension two,
		so $C_R$ is a flat deformation of $C$.
		Hence the given bundle $E_R$ determines $C_R\subset X_R$.
		
				\medskip
		Conversely, suppose that $C_R\subset X_R$ is given.
		Both $\det N_{C_R/X_R}$ and $L_R|_{C_R}$ extend $L|_C$.
		Since $H^1(C,\mathcal O_C)=0$, the same uniqueness
		argument as in \eqref{thomas-picard} gives
		$$
		\det N_{C_R/X_R}\cong L_R|_{C_R}.
		$$
		The vanishings for $L^{-1}$ in \eqref{thomas-vanishing}
		also hold for $L_R^{-1}$.
		The Serre construction of \cite[\S 3]{Tho99}
		therefore applies over $R$:
		$H^2(X_R,L_R^{-1})=0$ gives a rank-two bundle $V_R$
		in the sequence \eqref{thomas-serre} over $R$, and
		$H^1(X_R,L_R^{-1})=0$ makes it unique up to isomorphism.
		Twisting by $\mathcal O_{X_R}(1,-1)$ as in \eqref{EEE}
		recovers $E_R$. This proves \eqref{thomas-correspondence}.
		
		\begin{step}{Lift the curve and the threefold.}
			\label{step:thomas-lift}
		\end{step}
		
		By \eqref{thomas-correspondence}, it remains to lift
		$(X_R,C_R)$.
		
		By \eqref{thomas-picard}, the line bundles
		$\mathcal O_X(1,0)$ and $\mathcal O_X(0,1)$ extend to $X_R$.
		Moreover, \cite[Equation (3.2) with
		$(k,l)=(1,0)$ \& $(0,1)$]{Tho99}, gives
		$$
		H^1(X,\mathcal O_X(1,0))
		=
		H^1(X,\mathcal O_X(0,1))
		=0.
		$$
		Thus the coordinate sections $x,y,z$ and $u,v,w$
		extend and still generate these line bundles.
		They define a morphism
		$$
		X_R\longrightarrow P_R:=P\times\operatorname{Spec}R
		$$
		extending $X\subset P$,  which remains a closed
		immersion by Nakayama's lemma.
		Since $X_R$ is flat over $R$ and $X\subset P$ is
		a hypersurface, the immersion $X_R\subset P_R$ is
		regular of codimension one
		\cite[Proposition~D.2.5]{Ser06}.
		Its ideal is therefore a line bundle extending
		$\mathcal O_P(-3,-3)$. 
		As $H^1(P,\mathcal O_P)=0$, the same uniqueness argument
		as in \eqref{thomas-picard} identifies this ideal with
		$\mathcal O_{P_R}(-3,-3)$.
		Hence
		$$
		X_R=\{F_R=0\}\subset P_R
		$$
		for a bicubic polynomial $F_R$ lifting $F$.
		
		\medskip
		In the Addendum to the revised version of \cite{Tho99},
		Thomas described the deformations of $C$ inside $P$
		as a moving line in the first factor together with
		a moving point in the second.
		Writing $\mathfrak m_R$ for the maximal ideal of $R$,
		his description gives
		\begin{equation}\label{thomas-embedded-pair}
			C_R=\{x=ay+bz,\ u=cw,\ v=dw\}
			\subset\{F_R=0\},
			\quad a,b,c,d\in\mathfrak m_R.
		\end{equation}

				\medskip
		Choose lifts $\widetilde a,\widetilde b,\widetilde c,
		\widetilde d$ to $R'$.
		The same equations define
		$$
		C_{R'}:=
		\{x=\widetilde a y+\widetilde b z,\
		u=\widetilde c w,\
		v=\widetilde d w\}
		\cong\mathbb P^1_{R'}
		$$
		inside $P_{R'}:=P\times\operatorname{Spec}R'$.
		Thus $C_{R'}$ is a flat lift of $C_R$.
		Choose also a bicubic lift $\widetilde F$ of $F_R$ and set
		$$
		Q(y,z):=
		\widetilde F
		\bigl(\widetilde a y+\widetilde b z,\ y,\ z;\
		\widetilde c,\ \widetilde d,\ 1\bigr).
		$$
		The containment in \eqref{thomas-embedded-pair} implies
		that every coefficient of the homogeneous cubic $Q$
		lies in $J$.
		Since $\widetilde F$ has degree three in $u,v,w$,
		$$
		\widetilde F|_{C_{R'}}=w^3Q(y,z).
		$$
		We therefore set
		\begin{equation}\label{thomas-correction}
			F_{R'}:=\widetilde F-w^3Q(y,z).
		\end{equation}
		The correction is again bicubic, and
		\eqref{thomas-correction} gives
		\begin{equation}\label{thomas-lifting}
			F_{R'}\bmod J=F_R,
			\quad
			F_{R'}|_{C_{R'}}=0.
		\end{equation}
		
				\medskip
		Put $X_{R'}:=\{F_{R'}=0\}\subset P_{R'}$.
		Since its central fibre $X$ is a smooth hypersurface,
		the Jacobian criterion shows that $X_{R'}$ is smooth,
		hence flat, over $R'$.
		Thus \eqref{thomas-lifting} gives a lift
		$(X_{R'},C_{R'})$ of $(X_R,C_R)$.
		By \eqref{thomas-correspondence}, this lifted pair
		determines a bundle $E_{R'}$ restricting to $E_R$.
		This proves that $\operatorname{Def}(X,E)$ is smooth.
	\end{proof}

\section{Simple bundles with obstructed joint deformations}

\begin{proof}[Proof of Theorem~\ref{thm:simple-obstructed-pairs}]
	The proof is divided into four steps.
	\setcounter{step}{0}
	
	\begin{step}{An obstructed deformation of an ideal sheaf.}
	\end{step}
	
	Fix $p\in X$ and local holomorphic coordinates
	$z_1,\ldots,z_n$ centred at $p$.
	Let $Z\subset X$ be the zero-dimensional closed
	analytic subspace supported at $p$ with ideal
	$$
	\mathcal I_{Z,p}
	=
	(z_4,\ldots,z_n)+(z_1,z_2,z_3)^2,
	$$
	where the first summand is omitted when $n=3$.
	Then $\operatorname{length}(Z)=4$.
	
	\medskip
	For $n=3$, \cite[Theorem~2.1]{HMS22} identifies
	the germ of the Hilbert scheme at $[Z]$ with
	$(\mathbb C^3,0)\times(C\operatorname{Gr}(2,6),0)$.
	For $n>3$, on the chart with quotient basis
	$1,z_1,z_2,z_3$, each additional coordinate
	contributes four free parameters. Hence
	\begin{equation}\label{eq:simple-hilbert-model}
		\bigl(\operatorname{Hilb}^4(X),[Z]\bigr)
		\simeq
		(\mathbb C^{4n-9},0)\times
		\bigl(C\operatorname{Gr}(2,6),0\bigr).
	\end{equation}
	Here $C\operatorname{Gr}(2,6)\subset\mathbb C^{15}$
	is the affine cone over the Pl\"ucker embedding,
	with coordinates $p_{ij}$, $1\leq i<j\leq6$,
	and quadratic defining relations
	\cite[\S1.3]{HMS22}.
	Choose $v$ with $p_{12}(v)=p_{34}(v)=1$
	and all other coordinates zero.
	Since the relations are quadratic, $\epsilon v$
	defines a first-order \textit{embedded deformation}  of $Z$,
	namely a flat deformation as a closed analytic
	subspace of the fixed manifold $X$;
	see \cite[\S3.2.1]{Ser06}.
	
		\medskip
	The ideal sequence
	$$
	0\longrightarrow\mathcal I_Z
	\longrightarrow\mathcal O_X
	\longrightarrow\mathcal O_Z
	\longrightarrow0
	$$
	induces
	$$
	H^1(X,\mathcal O_X)
	\longrightarrow
	\operatorname{Ext}^1_X(\mathcal I_Z,\mathcal O_X)
	\longrightarrow
	\operatorname{Ext}^2_X(\mathcal O_Z,\mathcal O_X),
	$$
	where $\operatorname{Ext}^q_X$ denotes the global
	extension groups of coherent sheaves.
	The first term vanishes by assumption, while
	Serre duality gives
	$$
	\operatorname{Ext}^2_X(\mathcal O_Z,\mathcal O_X)
	\simeq H^{n-2}(X,\mathcal O_Z)^*=0.
	$$
	Thus
	\begin{equation}\label{eq:simple-ideal-ext-vanishing}
		\operatorname{Ext}^1_X(\mathcal I_Z,\mathcal O_X)=0.
	\end{equation}
	By \eqref{eq:simple-ideal-ext-vanishing} and
	\cite[Corollary~A.6]{Ser06}, a second-order lift
	of the corresponding ideal sheaf would yield
	an embedded lift.
	However, the Pl\"ucker relation
	$$
	q=p_{12}p_{34}-p_{13}p_{24}+p_{14}p_{23}
	$$
	would then require
	$$
	0=q(\epsilon v+\epsilon^2w)
	=\epsilon^2\neq0
	\quad\bmod\epsilon^3,
	$$
	a contradiction.
	Consequently, the corresponding first-order class
	$\alpha\in\operatorname{Ext}^1_X(\mathcal I_Z,\mathcal I_Z)$
	has nonzero quadratic obstruction
	\cite[\S2.3]{CPZ24}:
	\begin{equation}\label{eq:simple-ideal-obstruction}
		\alpha\circ\alpha\neq0
		\quad\text{in}\quad
		\operatorname{Ext}^2_X(\mathcal I_Z,\mathcal I_Z).
	\end{equation}

		\medskip
	Recall that a coherent sheaf $\mathcal F$ is
	\textit{simple} if
	$\operatorname{Hom}_X(\mathcal F,\mathcal F)
	=\mathbb C\cdot\operatorname{id}_{\mathcal F}$.
	An endomorphism of $\mathcal I_Z$ restricts on
	$X\setminus\{p\}$ to multiplication by a holomorphic
	function, which extends to a constant on $X$
	by Hartogs' theorem and compactness.
	Since $\mathcal I_Z$ is torsion-free, the endomorphism
	is scalar on all of $X$. Thus $\mathcal I_Z$ is simple.
	
	\medskip
	\begin{step}{Construction of the vector bundle.}
	\end{step}
	
	We follow the construction of Joyce--Song in \cite[\S8]{JS12}.
	Fix a very ample line bundle $\mathcal O_X(1)$
	and set $G_0:=\mathcal I_Z$.
	By \cite[Theorems~II.5.17 \& III.5.2]{Har77},
	we may successively choose $m_j\gg0$ such that
	$G_{j-1}(m_j)$ is globally generated and
	\begin{equation}\label{eq:simple-serre-vanishing}
		H^q\bigl(X,G_{j-1}(m_j)\bigr)=0
		\qquad(q>0).
	\end{equation}
	Put $V_j:=H^0(X,G_{j-1}(m_j))$ and consider
	\begin{equation}\label{eq:simple-evaluation}
		0\longrightarrow G_j
		\longrightarrow V_j\otimes\mathcal O_X(-m_j)
		\longrightarrow G_{j-1}
		\longrightarrow0,
		\qquad 1\leq j\leq n-1.
	\end{equation}
	
		\medskip
	Since $X$ is smooth of dimension $n$, the ideal
	sequence and \cite[Proposition~III. 6.11A]{Har77}
	show that $\mathcal I_Z$ has local projective
	dimension at most $n-1$.
	Each sequence \eqref{eq:simple-evaluation}
	decreases any positive local projective dimension
	by one. Hence
	$$
	E:=G_{n-1}
	$$
	is locally free.
	
	\medskip
	\begin{step}{Simplicity and the quadratic obstruction.}\label{sssssstep3}
	\end{step}
	
	Denote by $D^b(X)$ the bounded derived category
	of coherent sheaves on $X$, and set
	$M_j:=\mathcal O_X(-m_j)$.
	Since $X$ is strict Calabi--Yau,
	$M_j\otimes K_X\simeq M_j$ and
	$$
	\operatorname{Ext}^q_X(M_j,M_j)
	\simeq H^q(X,\mathcal O_X)
	=
	\begin{cases}
		\mathbb C,&q=0,n,\\
		0,&q\neq0,n.
	\end{cases}
	$$
	Thus $M_j$ is spherical in the sense of Seidel--Thomas
	\cite[Definition~1.1]{ST01};
	see also \cite[Example~3.3]{ST01}.
	By \cite[Theorem~1.2]{ST01}, its Seidel--Thomas
	twist $T_{M_j}$ is an exact autoequivalence
	of $D^b(X)$.
	
		\medskip
	By \eqref{eq:simple-serre-vanishing},
	$$
	\operatorname{Hom}_X(M_j,G_{j-1})=V_j,
	\quad
	\operatorname{Ext}^q_X(M_j,G_{j-1})=0
	\,\,(q>0).
	$$
	The evaluation description of the twist
	\cite[p.~44]{ST01} therefore gives the triangle
	$$
	V_j\otimes M_j
	\longrightarrow G_{j-1}
	\longrightarrow T_{M_j}(G_{j-1})
	\longrightarrow (V_j\otimes M_j)[1].
	$$
	Comparing with \eqref{eq:simple-evaluation}, we obtain
	$$
	T_{M_j}(G_{j-1})\simeq G_j[1];
	$$
	see also \cite[Lemma~8.2 \& (8.1)]{JS12}.
	Applying these autoequivalences successively gives
	\begin{equation}\label{eq:simple-ext-identification}
		\operatorname{Ext}^{\bullet}_X(\mathcal I_Z,\mathcal I_Z)
		\simeq
		\operatorname{Ext}^{\bullet}_X(E,E),
	\end{equation}
	compatibly with composition.
	
		\medskip
	In degree zero, \eqref{eq:simple-ext-identification}
	shows that $E$ is simple.
	Let $\beta\in\operatorname{Ext}^1_X(E,E)$
	correspond to $\alpha$. Then
	\eqref{eq:simple-ideal-obstruction} gives
	\begin{equation}\label{eq:simple-bundle-obstruction}
		\beta\circ\beta\neq0
		\quad\text{in}\quad
		\operatorname{Ext}^2_X(E,E)
		\simeq H^2(X,\End E).
	\end{equation}
Since $H^2(X,\mathcal O_X)=0$, the trace decomposition gives
$$
H^2(X,\End^0E)\simeq H^2(X,\End E)\neq0.
$$
In particular, $\operatorname{rank}E\geq2$.

	\begin{step}{Persistence under joint deformations.}
	\end{step}
	
	Let $X_R$ be an arbitrary first-order deformation
	of $X$ over $R=\mathbb C[\epsilon]/(\epsilon^2)$.
	Since $H^1(X,\mathcal O_X)=H^2(X,\mathcal O_X)=0$,
	the same argument as in \eqref{thomas-picard}
	extends $\mathcal O_X(1)$ to $X_R$.
	Keeping the defining ideal constant in a local
	trivialization near $p$ also extends $Z$ to
	a flat subspace $Z_R\subset X_R$.
	
	Starting with $\mathcal I_{Z_R}$, we lift
	\eqref{eq:simple-evaluation} successively.
	The sections defining the evaluation maps lift
	by \eqref{eq:simple-serre-vanishing}, and the
	lifted maps remain surjective by Nakayama's lemma.
	Their kernels are $R$-flat, and the final kernel
	is locally free since its central fibre is $E$.
	Thus $E$ extends over every first-order deformation of $X$.
	
		\medskip
	In the cohomology sequence of the Atiyah extension,
	$$
	H^1(X,A(E))
	\longrightarrow H^1(X,T_X^{1,0})
	\longrightarrow H^2(X,\End E)
	\xrightarrow{\;\iota\;}H^2(X,A(E)),
	$$
	the first arrow is therefore surjective,
	so $\iota$ is injective.
	
	The class $\beta$ defines a first-order deformation
	of $(X,E)$ with $X$ fixed.
	Its quadratic obstruction is
	$-\iota(\beta\circ\beta)$, which is nonzero by
	\eqref{eq:simple-bundle-obstruction} and the
	injectivity of $\iota$.
	Hence $\operatorname{Kur}(X,E)$ is singular.
\end{proof}

\medskip
\medskip
\medskip
\medskip
\medskip
\medskip
\medskip
	
	\appendix
	\renewcommand{\thesection}{\Roman{section}}

	\section{Obstructed joint deformations of Hermitian flat
		bundles on complex tori (joint with Hisashi Kasuya)}\label{jw}
	\begin{center}
		\scriptsize HISASHI KASUYA
		\alphafootnote{Graduate School of Mathematics, Nagoya University, Nagoya, 
			Japan; Email address: \texttt{kasuya@math.nagoya-u.ac.jp}
		} \&  RUNZE ZHANG
	\end{center}
	
	\medskip

	\begin{theorem}\label{with Kasuya}
		Let $X$ be a complex torus of complex dimension $n\geq 2$, and let $L\in \mathrm{Pic}^0(X)$. Set 
		\[
		E= L^{\oplus r},\quad r\geq 2.
		\]
		Then the Kuranishi space $\operatorname{Kur}(X,E)$ of joint deformations
		of the pair $(X,E)$ is singular. 
	\end{theorem}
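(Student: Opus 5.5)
The plan is to follow the strategy announced in the introduction: first split the Kuranishi germ, then identify the fixed-base factor with a commuting variety. Since $L\in\operatorname{Pic}^0(X)$ on a complex torus, $L$ is given by a unitary character of $\pi_1(X)\simeq\mathbb Z^{2n}$. Hence $E=L^{\oplus r}$ carries a Hermitian metric with vanishing Chern curvature: the direct sum of the flat metrics. The splitting theorem of Kasuya--Purho \cite[Proposition~1.1]{KP26} then gives an isomorphism of germs
\[
\bigl(\operatorname{Kur}(X,E),0\bigr)\simeq\bigl(\operatorname{Kur}(X),0\bigr)\times\bigl(\operatorname{Kur}_X(E),0\bigr).
\]
By Tian--Todorov (or directly, since deformations of tori are unobstructed), $\operatorname{Kur}(X)$ is smooth. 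A product of germs with one smooth factor is smooth if and only if the other factor is. So it suffices to show that $\operatorname{Kur}_X(E)$ is singular.

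For the fixed-base factor, I will use that $\operatorname{End}E\simeq L^\vee\otimes L\otimes\mathfrak{gl}_r\simeq\mathcal O_X\otimes\mathfrak{gl}_r$ is holomorphically trivial with flat metric. Deformations of $E$ on fixed $X$ are governed by the DGLA $(A^{0,\bullet}(X,\operatorname{End}E),\bar\partial,[\cdot,\cdot])\simeq(A^{0,\bullet}(X)\otimes\mathfrak{gl}_r,\bar\partial,[\cdot,\cdot])$. On a torus, the inclusion of translation-invariant forms $\Lambda^\bullet\langle d\bar z_1,\dots,d\bar z_n\rangle\otimes\mathfrak{gl}_r$, with zero differential, is a quasi-isomorphism of DGLAs. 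Indeed, it is compatible with brackets and induces an isomorphism on Dolbeault cohomology by harmonic theory for the flat metric. Homotopy invariance of Kuranishi germs, together with the fact that the Kuranishi germ of a DGLA with zero differential is the germ of its Maurer--Cartan locus, gives
\[
\operatorname{Kur}_X(E)\simeq\Bigl\{\textstyle\sum_i A_i\,d\bar z_i:\ [A_i,A_j]=0\ \forall i,j\Bigr\},
\]
the germ at $0$ of the commuting variety $C_n(\mathfrak{gl}_r)\subset\mathfrak{gl}_r^{\,n}$. Alternatively, one can argue analytically with the Kuranishi equation \eqref{eq:Kuranishi-fixed} for $\operatorname{End}E$. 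The harmonic forms are the constant ones, the bracket of constants is constant, hence harmonic, so $\bar\partial^*\mathcal G[\epsilon_1,\epsilon_1]=0$. This forces $\epsilon^{\mathrm{can}}=\epsilon_1$, and the obstruction map is exactly $t\mapsto\sum_{i<j}[A_i,A_j]\,d\bar z_i\wedge d\bar z_j$. I would present this second argument, since it avoids citing formality results.

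It remains to show that the commuting variety is singular at $0$ when $n\ge2$ and $r\ge2$. Its defining equations are homogeneous quadrics, so its Zariski tangent space at $0$ is all of $\mathfrak{gl}_r^{\,n}$, of dimension $nr^2$. Were the germ smooth, it would therefore be an open neighbourhood of $0$ in $\mathfrak{gl}_r^{\,n}$, that is, every nearby tuple would commute. This fails, since $(tE_{12},tE_{21},0,\dots)$ has $[tE_{12},tE_{21}]=t^2(E_{11}-E_{22})\neq0$. The main point needing care is the first step: checking that the hypotheses of the Kasuya--Purho splitting are met. I also need to ensure that the identification of $\operatorname{Kur}_X(E)$ uses the same metric-dependent Kuranishi construction up to germ isomorphism. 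Both are handled by the flatness of $h_E$ and the remark that Kuranishi germs are independent of metric choices.
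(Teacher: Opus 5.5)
Your proof is correct. The reduction and the analytic computation are exactly the paper's first two steps. The reduction uses the Kasuya--Purho splitting together with smoothness of $\operatorname{Kur}(X)$. The computation shows $\epsilon^{\mathrm{can}}=\epsilon_1$, so the obstruction map is $\sum_{i<j}[A_i,A_j]\,w_i\wedge w_j$. Your formality remark is an optional aside.

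You genuinely differ only in the last step, where one proves that the germ of the commuting variety at $0$ is singular.
\begin{itemize}
\item The paper splits into cases according to whether $(\mathcal C_{n,r},0)$ is reducible. In the irreducible case it uses the cone structure to deduce global irreducibility. It then imports the dimension formula $\dim\mathcal C_{n,r}=r^2+(n-1)r$ from Guralnick--Sethuraman and compares it with the tangent dimension $nr^2$.
\item You instead note that the defining ideal is generated by quadrics, so it lies in $\mathfrak m^2$ and the Zariski tangent space is all of $\mathfrak{gl}_r^{\,n}$. A smooth germ would then be an open neighbourhood of $0$. This is contradicted by non-commuting tuples $(tE_{12},tE_{21},0,\dots,0)$ arbitrarily close to $0$.
\end{itemize}

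Your argument is an instance of the general principle that a Kuranishi germ is smooth if and only if its obstruction map vanishes identically near the origin, since that map has no linear part. It makes both the irreducibility case analysis and the external dimension count unnecessary. The paper's route additionally yields quantitative information: the local dimension in the irreducible case and the defect $(n-1)r(r-1)$. That information is not needed for the statement itself.
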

	
	\begin{proof}
		We divide the proof into two steps.

		\setcounter{step}{0}
		\begin{step}
			Reduction to the Kuranishi space of the bundle
		\end{step}
		Since $X$ is a complex torus, it is compact K\"ahler. Endow $X$
		with a flat translation-invariant K\"ahler metric. Since
		$L\in\operatorname{Pic}^0(X)$, the line bundle $L$ admits a
	flat Hermitian metric. Equip
		\(
		E=L^{\oplus r}
		\)
		with the induced flat metric. 
		The splitting theorem of Kasuya--Purho
		\cite[Proposition 1.1]{KP26} then gives an isomorphism of analytic germs
		\[
		\bigl(\operatorname{Kur}(X,E),0\bigr)
		\simeq
		\bigl(\operatorname{Kur}(X),0\bigr)
		\times
		\bigl(\operatorname{Kur}_X(E),0\bigr),
		\]
		where $\operatorname{Kur}_X(E)$ denotes the Kuranishi space of
		deformations of the holomorphic structure of $E$ with the complex
		structure of $X$ fixed.
		
		Since $X$ has trivial
		canonical bundle, it has unobstructed deformations  thanks to  the
		Bogomolov--Tian--Todorov theorem, it therefore suffices to prove that
		$\operatorname{Kur}_X(E)$ is singular at $0$.

		\begin{step}
			Identification with the germ of the commuting variety
		\end{step}
		Recall that the infinitesimal deformations of a holomorphic vector
		bundle $E$ over a fixed compact complex manifold $X$ are parametrized
		by $H^1(X,\operatorname{End}E)$, while the obstructions lie in
		$H^2(X,\operatorname{End}E)$. In our case,
		\(
		\operatorname{End}E
		\simeq
		\mathcal O_X\otimes_{\mathbb C}\mathfrak{gl}_r(\mathbb C).
		\)
		Therefore
		\[
		H^1(X,\operatorname{End}E)
		\simeq
		H^{0,1}(X)\otimes\mathfrak{gl}_r(\mathbb C)
		\simeq
		\bigoplus_{i=1}^n
		\mathbb C w_i\otimes\mathfrak{gl}_r(\mathbb C),
		\]
		where $
		\left\{w_1,\ldots,w_n\right\} 
		$
		denotes  a basis of translation-invariant harmonic $(0,1)$-forms.
		
		Denote by $E_{\alpha\beta}$, $1\leq\alpha,\beta\leq r$, the standard
		basis of $\mathfrak{gl}_r(\mathbb C)$. Then the first-order
		term in the Kuranishi construction can be written as
		\[
		\varphi_1(t)
		=
		\sum_{i=1}^n
		\sum_{\alpha,\beta=1}^r
		t_{i,\alpha\beta}\,
		w_i\otimes E_{\alpha\beta}:=\sum_{i=1}^n w_i\otimes A_i(t),
		\]
		where we define
		\(
		A_i(t)
		:=
		\sum_{\alpha,\beta=1}^r
		t_{i,\alpha\beta}E_{\alpha\beta}
		\in\mathfrak{gl}_r(\mathbb C).
		\)
		
		Since the $w_i$ are translation-invariant harmonic forms, their wedge
		products are again harmonic. In particular, the bracket
		\[
		\frac12[\varphi_1(t),\varphi_1(t)]
		=
		\sum_{1\leq i<j\leq n}
		w_i\wedge w_j\otimes[A_i(t),A_j(t)]
		\]
		is harmonic. Consequently, the Green operator annihilates this bracket,
		and all higher-order terms in the canonical Kuranishi series vanish.
		Thus the obstruction map is precisely
		\[
		\operatorname{Ob}_E(t)
		=
		\sum_{1\leq i<j\leq n}
		w_i\wedge w_j\otimes[A_i(t),A_j(t)].
		\]
		Furthermore, since the forms
		$\{w_i\wedge w_j\,|\, 1\leq i<j\leq n\}$
		are linearly independent,
		\[
		\operatorname{Ob}_E(t)=0
		\quad\Longleftrightarrow\quad
		[A_i(t),A_j(t)]=0
		\quad\text{for all }i<j.
		\]
		
		Define the commuting variety of $n$-tuples of $r\times r$ matrices by
		\[
		\mathcal C_{n,r}
		:=
		\left\{
		(A_1,\ldots,A_n)\in\mathfrak{gl}_r(\mathbb C)^n
		\;\middle|\;
		[A_i,A_j]=0
		\text{ for every }i<j
		\right\}.
		\]
		Then there is an isomorphism of analytic germs
		\[
		\bigl(\operatorname{Kur}_X(E),0\bigr)
		\simeq
		\bigl(\mathcal C_{n,r},0\bigr).
		\]
		
		\medskip
		
		It therefore suffices to consider the following two cases:
		\begin{enumerate}[{(i)}]
			\item If $\operatorname{Kur}_X(E)$ is reducible at $0$, then the origin
			cannot be a smooth point.
			
			\item If $\operatorname{Kur}_X(E)$ is irreducible at $0$, 
			then the analytic germ $(\mathcal C_{n,r},0)$ is irreducible.
			
			We then  claim that the underlying topological space
			$\lvert\mathcal C_{n,r}\rvert$ is globally irreducible. Suppose, to
			the contrary, that
			\[
			\lvert\mathcal C_{n,r}\rvert
			=
			Z_1\cup\cdots\cup Z_\ell,
			\quad \ell\geq2,
			\]
			is its decomposition into irreducible components.
			
			Since the defining commutator equations are homogeneous, every
			$Z_\alpha$ is a cone and therefore contains the origin. Set
			\(
			W:=Z_2\cup\cdots\cup Z_\ell.
			\)
			Then
			\[
			\bigl(\mathcal C_{n,r},0\bigr)
			=
			(Z_1,0)\cup(W,0).
			\]
			
			Both $(Z_1,0)$ and $(W,0)$ are proper analytic subgerms. Indeed,
			suppose for instance that
			\(
			(Z_1,0)=(\mathcal C_{n,r},0).
			\)
			Then $Z_1$ agrees with $\lvert\mathcal C_{n,r}\rvert$ in some
			neighborhood $U$ of the origin. Given any
			$z\in\lvert\mathcal C_{n,r}\rvert$, choose
			$\lambda\in\mathbb C^*$ sufficiently small that $\lambda z\in U$.
			Then $\lambda z\in Z_1$. Since $Z_1$ is a cone, it follows that
			\[
			z=\lambda^{-1}(\lambda z)\in Z_1.
			\]
			Thus $\lvert\mathcal C_{n,r}\rvert=Z_1$, contradicting
			$\ell\geq2$. The same argument applies to $W$.
			
			Hence $(\mathcal C_{n,r},0)$ is the union of two proper closed
			analytic subgerms, contradicting its irreducibility. Therefore
			$\ell=1$, and $\lvert\mathcal C_{n,r}\rvert$ is globally
			irreducible.
			
			\medskip
			Applying \cite[Proposition 6]{GS00}, or equivalently
			\cite[Lemma 7.9.1]{OCV11}, we obtain\footnote{We learned of these
				references from Alexander Stasinski's helpful answer to the
				MathOverflow question
				\href{https://mathoverflow.net/questions/499942/dimension-of-the-variety-of-commuting-matrices}
				{\emph{Dimension of the variety of commuting matrices}}.}
			\[
			\dim\mathcal C_{n,r}
			=
			r^2+(n-1)r,
			\]
			and hence the local analytic dimension of $\textrm{Kur}_X(E)$ at $0$ is 
			\[
			\dim_0\operatorname{Kur}_X(E)
			=
			r^2+(n-1)r.
			\]
			On the other hand, the dimension of the Zariski tangent space at
			$0$ is
			\[
			\dim_{\mathbb C}T_0\operatorname{Kur}_X(E)
			=
			h^1(X,\operatorname{End}E)
			=
			nr^2.
			\]
			
			For $n,r\geq2$, we have
			\[
			\begin{aligned}
				\dim_{\mathbb C}T_0\operatorname{Kur}_X(E)
				-
				\dim_0\operatorname{Kur}_X(E)
				&=
				nr^2-\bigl(r^2+(n-1)r\bigr)\\
				&=
				(n-1)r(r-1)\\
				&>0.
			\end{aligned}
			\]
		\end{enumerate}
		
		In either case, $\operatorname{Kur}_X(E)$, and hence
		$\operatorname{Kur}(X,E)$, is singular at $0$. This completes the proof.
	\end{proof}
	
	\begin{remark} As in Remark~\ref{thomas} and
		Step~\ref{sssssstep3} of the proof of
		Theorem~\ref{thm:simple-obstructed-pairs},
		the bundles in Theorem~\ref{with Kasuya}
		do not satisfy the vanishing assumption
		$H^2(X,\End^0E)=0$ in
		Theorems~\ref{main thm} and~\ref{main thm2}.
		Indeed, since $E=L^{\oplus r}$,
		$$
		H^2(X,\End^0E)
		\simeq
		H^2(X,\mathcal O_X)\otimes\mathfrak{sl}_r(\mathbb C),
		$$
		where $\mathfrak{sl}_r(\mathbb C)$ denotes the space
		of trace-free $r\times r$ complex matrices.
		Thus
		$$
		h^2(X,\End^0E)
		=
		\binom{n}{2}(r^2-1)>0
		\quad(n,r\geq2).
		$$
	\end{remark}
	
	\begin{remark}\label{rem:wan-huybrechts}
			Huybrechts \cite[\S2]{Huy95} asked whether the tangent
			bundle of a strict Calabi--Yau threefold can have
			obstructed deformations with the threefold fixed.
			Wan \cite[Theorem~1.5]{Wan26} constructs such a
			projective threefold $Z$ with Hermitian flat
			tangent bundle.
			The splitting theorem
			\cite[Proposition~1.1]{KP26} then shows that the
			joint deformations of $(Z,T_Z^{1,0})$ are also
			obstructed, thereby also answering
			Felten's Question~\ref{q2} positively.
		\end{remark}

	
	\medskip
	\medskip
	\medskip

 \noindent{\bfseries Acknowledgements.}
		The author is deeply grateful to his postdoctoral mentors, Professors
		Kwokwai Chan and Naichung Conan Leung, for their constant
		encouragement and support.
		He also thanks Professors  Daniele Angella, Kwokwai Chan, Simon Felten,
		Osamu Fujino, Naichung Conan Leung, Kefeng Liu,   Lingxu Meng, Sheng Rao,
		Xueyuan Wan, Xiangdong Yang, and Kang Zuo
		for many helpful discussions and comments.
		
		The author especially thanks Professor Hisashi Kasuya for many
	fruitful discussions via email, which led
		to their collaboration in Appendix~\ref{jw}.
		He is also particularly grateful to Professor Richard P. 
		Thomas for his comments on an earlier draft and for
		communicating Duco van Straten's suggestion of using
		an obstructed zero-dimensional subscheme and a locally
		free resolution. This suggestion led to the construction
		in Theorem~\ref{thm:simple-obstructed-pairs}.
	
	\medskip
	\noindent\textbf{Declaration on the use of AI.} \,\,
This paper is a continuation of \cite{Zha26}.
Most of its results were presented by the author
at conferences during the 2025--2026 academic year.
The author constructed the example in
Proposition~\ref{prop:weak-ddbar-threefold}, for which
$H^2(X,A(E))=0$ whenever $H^2(X,\End^0E)=0$.
The example in
Proposition~\ref{prop:nonzero-obstruction-example}
was suggested by ChatGPT.
It has a nonzero obstruction space, and the forgetful
morphism
$\operatorname{Def}(X,E)\to\operatorname{Def}(X)$
is not smooth.
The author checked the construction and verified
all details of the proof.
ChatGPT was also used to  assist with English grammar and language editing.
The author takes full responsibility for the mathematical
content and final text.

\end{document}